\newtheorem{Th}{Theorem}[section]
\theoremstyle{definition}
\newtheorem{Lem}[Th]{Lemma}
\newtheorem{Cor}[Th]{Corollary}
\newtheorem{Rem}[Th]{Remark}
\theoremstyle{definition}
\newtheorem{Def}[Th]{Definition}
\numberwithin{equation}{section} 
\newcommand{\R}{\mathbb{R}}
\renewcommand{\leq}{\leqslant}
\renewcommand{\geq}{\geqslant}
\newcommand{\setword}[2]{%
  \phantomsection
  #1\def\@currentlabel{\unexpanded{#1}}\label{#2}%
}
\begin{document}

 \title[Semilinear biharmonic heat equations]
   {On the critical behavior for the semilinear biharmonic heat equation with forcing term in  exterior domain}

\author[N.N. Tobakhanov]{Nurdaulet N. Tobakhanov}
\address{
  Nurdaulet N. Tobakhanov:
  \endgraf
  Department of Mathematics
  \endgraf
Nazarbayev University, Astana, Kazakhstan
  \endgraf
  {\it } {\rm nurdaulet.tobakhanov@nu.edu.kz}
  }

 \author[B.T. Torebek]{Berikbol T. Torebek}
\address{
	Berikbol T. Torebek:
	\endgraf
Institute of
Mathematics and Mathematical Modeling
\endgraf
Shevchenko street 28, 050010 Almaty, Kazakhstan
	\endgraf {\it} {\rm torebek@math.kz}
} 

\thanks{No new data was collected or generated during the course of research.}

     \keywords{Exterior problem; critical exponent; nonexistence; existence}

     \begin{abstract}In this paper, we investigate the critical behavior of solutions to the semilinear biharmonic heat equation with forcing term $f(x),$ under six homogeneous boundary conditions. This paper is the first since the seminal work by Bandle, Levine, and Zhang [J. Math. Anal. Appl. 251 (2000) 624--648], to focus on the study of critical exponents in exterior problems for semilinear parabolic equations with a forcing term. By employing a method of test functions and comparison principle, we derive the critical exponents $p_{Crit}$ in the sense of Fujita. Moreover, we show that $p_{Crit}=\infty$  if  $N=2,3,4$  and $p_{Crit}=\frac{N}{N-4}$  if $N \geq 5$. The impact of the forcing term on the critical behavior of the problem is also of interest, and thus a second critical exponent in the sense of Lee-Ni, depending on the forcing term is introduced. We also discuss the case $f\equiv 0$, and present the finite-time blow-up results and lifespan estimates of solutions for the subcritical and critical cases. The lifespan estimates of solutions are obtained by employing the method proposed by Ikeda and Sobajama in [Nonlinear Anal. 182 (2019) 57--74]. 
      \end{abstract}
     \maketitle

     \tableofcontents

\section{Introduction}
In this paper, we are concerned with the existence and nonexistence of global weak solutions to the
inhomogeneous semilinear heat equations with forcing terms on exterior domains
\begin{equation}\label{main}
\left\{\begin{array}{lll}
u_t+\Delta^2 u=|u|^{p}+f(x), & \text { in }  D^c \times(0, \infty) , \\{}\\
u(x, 0)=u_0(x), & \text { in } D^c, 
\end{array}\right.
\end{equation}

\noindent where $p > 1$ is a constant, $\Delta$ is the Laplace operator, and $D=\overline{B(0,1)}$ is the closed unit ball, and $D^c$ is its complement in $\R^N.$   The problem \eqref{main} is investigated under the following boundary conditions:

⟨I⟩ \quad  Navier boundary conditions: 
\begin{equation}\label{1.1}
\left\{\begin{array}{lll}
u(x,t)=0,  & \text { in } \partial D \times (0, \infty),\\ {}\\
\Delta u(x,t)=0, & \text { in } \partial D \times (0, \infty); 
\end{array}\right.
\end{equation}

 ⟨II⟩ \quad Dirichlet boundary conditions:
\begin{equation}\label{1.2}
   \left\{\begin{array}{lll}
   u(x,t)=0,  & \text { in } \partial D \times (0, \infty),\\ {}\\
   \frac{\partial u(x,t)}{\partial n} =0, & \text { in } \partial D \times (0, \infty);
   \end{array}\right.
\end{equation}

⟨III⟩ \quad Dirichlet-Navier boundary conditions:
\begin{equation}\label{1.4}
  \left\{\begin{array}{lll} 
   u(x,t)=0,  & \text { in } \partial D \times (0, \infty),\\{}\\ 
   \frac{\partial \Delta u(x,t)}{\partial n} =0, & \text { in } \partial D \times (0, \infty);
   \end{array}\right. 
\end{equation}

⟨IV⟩ \quad Kuttler-Sigillito boundary conditions:
\begin{equation}\label{1.5}
   \left\{\begin{array}{lll} 
   \frac{\partial u(x,t)}{\partial n}=0,  & \text { in } \partial D \times (0, \infty),\\{}\\ \frac{\partial \Delta u(x,t)}{\partial n}=0, & \text { in } \partial D \times (0, \infty);
   \end{array}\right. 
\end{equation}

 ⟨V⟩ \quad Neumann-Navier boundary conditions:
\begin{equation}\label{1.6}
    \left\{\begin{array}{lll} 
   \Delta u(x,t)=0,  & \text { in } \partial D \times (0, \infty),\\{}\\ \frac{\partial \Delta u(x,t)}{\partial n}=0, & \text { in } \partial D \times (0, \infty); \end{array}\right.  
\end{equation}

⟨VI⟩ \quad Neumann boundary conditions:
\begin{equation}\label{1.3}
   \left\{\begin{array}{lll}
   \frac{\partial u(x,t)}{\partial n}=0,  & \text { in } \partial D \times (0, \infty),\\{}\\ \Delta u(x,t)=0, & \text { in } \partial D \times (0, \infty), 
   \end{array}\right.
\end{equation}

\noindent where $n$ denotes the outward unit normal vector on $\partial D$.  For the convenience of the reader, Roman numerals were allocated to each boundary condition. For instance, the problem I indicate semilinear equation \eqref{main} with boundary conditions \eqref{1.1}, etc.

Before stating the main results, let us briefly point out the results related to the problem \eqref{main}. In 1966, Fujita \cite{Fujita} considered corresponding semilinear parabolic equation 
\begin{equation}\label{fu}
\left\{\begin{array}{lll}
 u_t-\Delta u=u^p, & \text { in } \mathbb{R}^N \times(0, \infty) ,\\{}\\
u(x, 0)=u_0(x) \geq 0,& \text { in } \mathbb{R}^N,
\end{array}\right.
\end{equation}
and proved that:\\
  \setword{(i)}{i}  If $1<p<1+\frac{2}{N}:=p_F$, there is no global solution to the problem \eqref{fu}. \\
    (ii) If $p>p_F$ and for a sufficiently small $u_0(x)$, there exists a global positive solution to the problem \eqref{fu}.
    
    The number $p_F$ came to be referred to as the Fujita critical exponents. Moreover, the critical case  $p=p_F$ was considered by Hayakawa \cite{Hayakawa} for $N=1,2$, Sugitani \cite{SG} and Kobayashi et al. \cite{koba} for $N\geq3$. They have been established that the $p=p_F$  also belongs to case \ref{i}. Various proofs for blow-up in the critical case have been demonstrated by \cite{aronson,Pinsky,Weissler}.  To look a widely about the background of blow up phenome refer to review papers \cite{review2,review}.

    Later, Lee and Ni \cite{t2} studied the large-time behavior of solutions to the problem \eqref{fu} for $p>p_F$ with $u_0\geq 0$ and $$u_0(x)\sim |x|^{-\omega}\,\,\, \text{at}\,\,\,|x|\rightarrow\infty.$$ They provided that for $\omega<\frac{2}{p-1}$ the solution is not global in time, and if $u_0$ is sufficiently small with $\omega\geq\frac{2}{p-1}$, then there exists a global solution to the problem \eqref{fu}. The number $\omega_{Crit}=\frac{2}{p-1}$ has been designated as the second critical exponent.

In \cite{Bandleandlevine}, Bandle and Levine observed the semilinear heat equation under Dirichlet boundary conditions in exterior domains: 
\begin{equation}\label{bandle}
\left\{\begin{array}{lll}
 u_t-\Delta u=u^p, & \text { in } D^c \times(0, \infty),\\{}\\
u(x, 0)=u_0(x),& \text { in } D^c,\\{}\\
 u(x, t)  =0 , &  \text { in } \partial D\times (0, \infty),
\end{array}\right.
\end{equation}
\noindent where they established that the problem  \eqref{bandle} possesses no global positive solutions if $1<p<p_F$ and there exist global solutions if $p>p_F$. Moreover, critical case $p=p_F$ was shown by Suzuki \cite{suziki} for $N\geq3$ and by Ikeda and Sobajima \cite{Ikeda1} for $N=2$. There are several extensions of exterior Dirichlet boundary conditions that have to be mentioned \cite{e5,jlelis,sun,sun2}.

Then, Levine and Zhang \cite{Levinezhang} investigated the problem \eqref{fu} under Neumann boundary condition:
\begin{equation}\label{levineZ}
\frac{\partial u(x, t)}{\partial n}=0, \text { in } \partial D\times (0, \infty).
\end{equation}
They showed that the critical exponent remains in the blow-up case if $1<p\leq p_F$ and there exists a global solution to the problem \eqref{levineZ} when $p>p_F$. For more results about  Neumann boundary conditions in exterior domains, refer to, for instance, \cite{Guo,Zhang}.

In addition, Rault \cite{Rault} considered the problem \eqref{fu} under Robin boundary conditions:
\begin{equation}
\frac{\partial u(x, t)}{\partial n}+\alpha u(x,t)=0, \text { in } \partial D\times (0, \infty),
\end{equation}
using a truncation argument and a comparison principle, he established that the critical exponent is the same as $p_F$. For further results related to Robin boundary conditions in exterior domains, see e.g.  \cite{3,Ikeda}.

Bandle, Levine, and Zhang considered the following problem in \cite{BLZ}:
\begin{equation}\label{aaa}
\left\{\begin{array}{lll}
u_t-\Delta u=|u|^p+f(x), & \text { in } 	D^c \times(0, \infty) , \\{}\\
u(x, 0)=u_0(x), & \text { in } 	D^c,\\{}\\
u(x, t)=0, & \text { in } \partial D.
\end{array}\right.
\end{equation}
They obtained the following outcomes:\\
(i) If $p< \frac{N}{N-2}$ and $\int_{D^c} f(x) d x>0$, then Problem \eqref{aaa} has no global solutions.\\
(ii) If $p>\frac{N}{N-2}$ and $N\geq3$, then for sufficiently small $f(x),u_0(x)$  there exists  global solutions to the problem \eqref{aaa}.\\
(iii) For any $p>1$, there exists a negative $f(x)$ such that equation \eqref{aaa}  admits a global solution. 

As can be seen from above, the forcing term $f(x)$ affects the critical exponent, and in consequence, it will jump from $1+\frac{2}{N}$ to $\frac{N}{N-2}$.

In \cite{Gala}, Galaktionov and Pohozaev generalized the problem \eqref{fu} as follows:
\begin{equation}\label{qqq}
\left\{\begin{array}{lll}
 u_t+(-\Delta)^m u=|u|^p,& \text { in } 	\R^N \times(0, \infty) , \\{}\\
 u(x, 0)=u_0(x), & \text { in }	\R^N,
\end{array}\right.
\end{equation}
where $m > 1$, with bounded integrable initial data. They showed that  $1<p\leq 1+\frac{2m}{N}$ blows up in finite time while $p>1+\frac{2m}{N}$ then there exists a global solution to the problem \eqref{qqq} for sufficiently small initial data. Then, Caristi and Mitidieri \cite{Caristi}, showed that there exists a global solution to the problem \eqref{qqq} for slow decay
initial data $0 \leqslant u_0(x) \leqslant \frac{C}{1+|x|^\beta},$ with $\beta>\frac{2 m}{p-1}$ and $p>1+\frac{4}{n}$. Furthermore, Gazzola and Grunau \cite{gazolla}  extend the existence of a global solution with nonlinearity term $|u|^{p-1}u$ for $m=2$ and $\beta=\frac{2 m}{p-1}$. Some other related results in this direction, see e.g.  \cite{Fer,gaz,gaz2}. Later,  Majdoub \cite{majdoub} analyzed the problem \eqref{qqq} with forcing term $f(x)$, where he stated that the critical exponent is $\frac{N}{N-2m}$.

It is important to highlight that numerous papers  have investigated several extensions of the nonexistence of the higher-order semilinear parabolic equations in $\mathbb{R}^N$ \cite{Egorov1,Gala,Philippin} and evolution equations with higher-order time-derivatives \cite{Kirane,laptevgg,e7} in exterior domains. 

To the best of our knowledge, critical exponents in the sense of Fujita have not yet been studied for high-order parabolic problems on exterior domains. This is probably due to various technical difficulties arising from boundary conditions. This fact motivates us more to study the critical behavior of high-order semilinear parabolic exterior problems, one of which is problem \eqref{main}. We hope that our results contribute to the understanding of the blow-up phenomenon in semilinear heat equations with biharmonic operators and provide valuable information for future research. 
\subsection{Main results}
Before stating the main results, we first have to define the weak solutions to the problems I-VI. There are six boundary conditions, however, by analyzing behavior, we can divide them into three groups. Problems were examined based on the following assumptions: $p>1, u_0 \in L_{l o c}^1\left(\overline{D^c}\right)$,  and   $f \not \equiv 0$. 

The existence of a local solution to problems I-VI follows from \cite{pazy}, where the existence of a unique local classical solution to abstract parabolic problems including problems I-VI was studied. Therefore, we shall not revisit the question of local existence and instead concentrate on the global behavior of solutions to problems I-VI.
  \begin{Def}\label{def1}
      We say that $u$ is a  weak solution to  problems   I-VI if there exists $0<T<\infty$ such that $u \in L_{l o c}^p\left(\overline{ Q}\right)$, and satisfies
      \end{Def}
\begin{equation*}
    \begin{aligned}
        \int_Q  |u|^{p}\varphi_j\mathrm{d}x \mathrm{d}t +\int_Q f(x) \varphi_j(x,t)\mathrm{d}x \mathrm{d}t +&\int_{D^c} u_0(x)\varphi_j(x,0)\mathrm{d}x \\
        &= -\int_Q u\partial_t\varphi_j\mathrm{d}x \mathrm{d}t +\int_Q u\Delta^2 \varphi_j\mathrm{d}x \mathrm{d}t,
    \end{aligned}
\end{equation*}
where  $Q:=[0, T] \times D^c,$ and for every function $\varphi_j \in C_{t, x}^{1,4}\left(\overline{ Q}\right) \cap C\left(Q\right)$, $\varphi \geq 0$  such that \\
(i) $\varphi_j(t, \cdot) \equiv 0, t \geq T $;\\       
(ii) $\varphi_j(\cdot, x) \equiv 0,|x| \geq R \text { here } R>1$;\\
\noindent    for $j\in\{1,2,3,4,5,6\}$. The property (i) and property (ii) are the same for all test functions and the property (iii) will identify the difference between the weak solutions to problems I-VI.
\begin{center}
\begin{tabular}{ p{5.5em}  r  r  r r  } 
\hline
 \multicolumn{4}{c}{ (iii)} \\
 \hline
 \rule{0pt}{4ex} 
Problem I & $\varphi_1=\Delta \varphi_1=0$ on $\partial D$; & Problem IV & $\frac{\partial\varphi_4}{\partial n}=\frac{\partial\Delta\varphi_4}{\partial n}=0$ on $\partial D$;  \\ 
\rule{0pt}{4ex} 
Problem II & $\varphi_{2}=\frac{\partial\varphi_2}{\partial n} =0$ on $\partial D;$ & Problem V & $\Delta\varphi_5=\frac{\partial\Delta\varphi_5}{\partial n}=0$ on $\partial D$;  \\ 
\rule{0pt}{4ex} 
Problem III & $\frac{\partial\varphi_3}{\partial n}=\Delta\varphi_3=0$ on $\partial D$; & Problem VI &  $\varphi_{6}=\frac{\partial\Delta\varphi_6}{\partial n} =0$ on $\partial D$. \\ 

\end{tabular}
\end{center}

\vspace{1mm}
 Furthermore, if $T>0$ is allowed to be chosen arbitrarily, then $u$ is referred to as a global weak solution to the problem.

Below we formulate our first result.
\begin{Th}\label{th.n=3} Suppose that $u_0 \in L_{l o c}^1(D^c)$, and $f \in L_{l o c}^1(D^c)\cap L^\infty( D^c)$, $$\int_{D^c}A(x)f(x)dx>0,$$ where
 \begin{equation}\label{a(x).gen}
A(x)= \begin{cases} \text{harmonic function \eqref{A(x)},} & \text {Problem I and VI, } \\
\text{biharmonic function \eqref{H(x)},} & \text {Problem II, } \\ 1, & \text { Problem III-V.}\\
\end{cases}
\end{equation} 
Then, we have 
    \begin{itemize}
        \item[(i)] If $N=2,3,4$, then for all $p>1$, problems I-VI admit no global (in time) weak solution.
        \item[(ii)] Let $N\geq 5$  and  $1<p\leq p_{Crit}:=\frac{N}{N-4}$, then problems I-VI admit no global (in time) weak solution.
        \item[(iii)] When $N\geq 5$ and $p>p_{Crit}$, problems I-VI  admit global (stationary) solutions for some sufficiently small $f>0$ and $u_0>0$.
    \end{itemize}
\end{Th}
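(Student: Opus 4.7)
The plan is to treat all six boundary value problems in a unified framework via the test-function method of Mitidieri--Pohozaev, using the weight $A(x)$ in~\eqref{a(x).gen} engineered so that any product of $A$ with a smooth cutoff satisfies the boundary conditions in property~(iii) of Definition~\ref{def1}. Concretely, for Problems~I and~VI one takes the positive harmonic profile $A(x)=1-|x|^{2-N}$ on $\overline{D^c}$ vanishing on $\partial D$, which automatically satisfies $\Delta A=\partial\Delta A/\partial n=0$; for Problem~II one takes the positive biharmonic profile from~\eqref{H(x)} obeying $A=\partial A/\partial n=0$; and for Problems~III--V the constant $A\equiv 1$ already fits. Test functions of the product form $\varphi_j(x,t)=A(x)\chi(|x|/R)^{\ell}\eta(t/T)$ with $\ell>2p'$ and standard smooth cutoffs $\chi,\eta$ then belong to the admissible class.

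For part~(i) and the strictly subcritical part of~(ii), I would insert $\varphi_j$ into the weak formulation, estimate the two right-hand terms $\int u\,\partial_t\varphi_j$ and $\int u\,\Delta^{2}\varphi_j$ by $\varepsilon$-Young with exponents $p,p'$, and absorb the induced $|u|^p$ contributions into the left-hand side. Normalising by $\int_0^T\eta(t/T)\,dt$ and sending $T\to\infty$ kills the time-derivative term and reduces the estimate to
\begin{equation*}
\int_{D^c}A(x)f(x)\,\chi(|x|/R)^{\ell}\,dx\;\leq\;C\int_{D^c}\bigl|\Delta^{2}(A\chi^{\ell})\bigr|^{p'}(A\chi^{\ell})^{1-p'}\,dx.
\end{equation*}
The right-hand integrand is supported on the annulus $R\leq|x|\leq 2R$ with size $O(R^{-4p'})$, hence the whole right-hand side is $O(R^{\,N-4p'})$, which vanishes as $R\to\infty$ precisely when $p'>N/4$: that is, for $p<N/(N-4)$ when $N\geq 5$ and for every $p>1$ when $N\leq 4$. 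Combined with the standing assumption $\int A f>0$, this contradicts global existence.

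The critical case $p=N/(N-4)$ demands the standard refinement: the above argument now only yields the a~priori integrability $\iint_{D^c\times(0,\infty)}|u|^{p}A\,dx\,dt<\infty$. Re-applying H\"older to the annular gradient contributions with this finite integrability shows $\int_{R\le|x|\le 2R}|u|^{p}A\,dx\,dt\to 0$ as $R\to\infty$; feeding this decay back into the Young step produces a strict inequality in the limit, again contradicting $\int Af>0$ and completing (ii).

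For part~(iii), I would construct a positive stationary supersolution $U(x)$ of $\Delta^{2}U=U^{p}+f$ compatible with the chosen boundary condition, using that for $N\geq 5$ the biharmonic Green's function on $D^c$ decays like $|x|^{4-N}$. The natural search space is $\{U:0<U(x)\leq c(1+|x|)^{-\sigma}\}$ with $\sigma\in\bigl(4/(p-1),\,N-4\bigr)$, an interval which is non-empty precisely when $p>N/(N-4)$. A Banach contraction argument for the integral formulation $U=\mathcal{G}(U^{p}+f)$, valid for $f$ sufficiently small in a suitable weighted norm, produces such a $U$; the comparison principle for $\partial_t+\Delta^{2}$ under the chosen boundary condition then ensures that any initial datum $0<u_0\leq U$ yields a global solution bounded by $U$. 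The principal obstacle in the whole scheme is the critical-case refinement, which must isolate the annular $|u|^p$-mass while respecting the structure imposed by $A(x)$; a secondary difficulty is that, unlike for the Laplacian, the fourth-order operator admits no direct maximum principle, so the comparison step in part~(iii) for Problems I, II, VI must be carried out through the iterated integral formulation rather than pointwise.
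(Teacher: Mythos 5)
Your parts (i) and the subcritical half of (ii) follow the paper's argument almost verbatim (weighted test function $A(x)\xi^{\ell}(|x|/R)\eta(t/T)$, $\varepsilon$-Young, scaling $T$ with $R$), with two small slips: you need $\ell\geq 4p'$, not $\ell>2p'$, so that $\varphi^{-1/(p-1)}|\Delta^{2}\varphi|^{p'}\sim\xi^{\ell-4p'}$ stays bounded for the fourth-order operator; and the blanket bound $O(R^{N-4p'})$ is not literally correct for the unbounded weights (Problem I with $N=2$, Problem II with $N=2,4$ pick up $\ln R$ and different powers), though the conclusion survives in every case. For the critical exponent $p=N/(N-4)$ you genuinely diverge from the paper: you propose the a priori bound $\iint|u|^{p}A<\infty$ followed by a H\"older refinement on the annular support of $\Delta^{2}\varphi$, whereas the paper switches to test functions with logarithmic radial arguments $\mathcal{F}^{\ell}\bigl(\ln(|x|/\sqrt{R})/\ln\sqrt{R}\bigr)$, which produce a right-hand side of order $(\ln R)^{1-N/4}\to 0$ directly. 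Both routes are standard and valid here (the paper itself uses your refinement technique for the critical case of Theorem \ref{th.f}); the logarithmic test functions avoid the two-step limiting argument, while your version avoids introducing a second family of cutoffs and the associated derivative estimates.

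For part (iii) there is a genuine gap. Your construction rests on the integral formulation $U=\mathcal{G}(U^{p}+f)$ and on a comparison principle for $\partial_t+\Delta^{2}$, both of which require positivity and precise decay of the biharmonic Green's functions of the six exterior boundary value problems; none of this is available (the paper explicitly flags the study of these Green's functions as open, and positivity of biharmonic Green's functions is well known to fail in general). The theorem, however, only asserts existence for \emph{some} small $f>0$ and $u_0>0$, and the paper exploits this freedom: it takes the explicit profile $v(x)=\varepsilon|x|^{-m}$ with $\tfrac{4}{p-1}<m<N-4$ (a nonempty range exactly when $p>\tfrac{N}{N-4}$), computes $\Delta^{2}v-v^{p}=\varepsilon|x|^{-m-4}\bigl(M-\varepsilon^{p-1}|x|^{-mp+m+4}\bigr)>0$ for small $\varepsilon$, and simply \emph{defines} $f:=\Delta^{2}v-v^{p}$, so that $v$ is an exact positive stationary solution satisfying the boundary inequalities with the right signs. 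This sidesteps the Green's function machinery entirely. As written, your part (iii) would not close without first establishing the unproven kernel properties; you should either adopt the explicit-profile construction or restrict the claim to the class of $f$ it produces.
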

Let us consider the problems I-VI for the following semilinear biharmonic Poisson equation
\begin{equation}\label{Ellip}
\Delta^2 u(x)=|u(x)|^{p}+f(x),  \text { in }  D^c.
\end{equation}
Then the following statement is true.
\begin{Cor}\label{Cor1}
Assume that $f \in L_{l o c}^1(D^c)\cap L^\infty( D^c)$ and $f\geq 0$. Then
\begin{itemize}
\item[(i)] If $N=2,3,4$, then for all $p>1$, problems I-VI for the equation \eqref{Ellip} admits no weak solutions.
\item[(ii)] Let $N\geq 5$ and $1<p\leq p_{Crit}:=\frac{N}{N-4}$, then problems I-VI for the equation \eqref{Ellip} admits no weak solutions. 
\item[(iii)] When $N\geq 5$ and $p>p_{Crit}$, problems I-VI for the equation \eqref{Ellip} admit positive solutions.
    \end{itemize}
\end{Cor}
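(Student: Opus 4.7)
The plan is to deduce the corollary directly from Theorem \ref{th.n=3} by viewing a weak solution of the stationary equation \eqref{Ellip} as a time-independent weak solution of \eqref{main}, and conversely. No new machinery is needed; the core observation is that the parabolic weak formulation in Definition \ref{def1} reduces, on time-independent $u$, exactly to the elliptic weak formulation under the same boundary conditions.

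For the nonexistence claims (i) and (ii), I would argue by contradiction. Suppose $u \in L^p_{\mathrm{loc}}(\overline{D^c})$ is a weak solution to problem $j$ for \eqref{Ellip}. Set $\tilde u(x,t) := u(x)$ on $Q = D^c \times [0,T]$ for an arbitrary $T>0$ and take $u_0(x) := u(x)$, so that $u_0 \in L^1_{\mathrm{loc}}(D^c)$ by H\"older's inequality on compact subsets. For any admissible test function $\varphi_j$ in Definition \ref{def1}, the time integration by parts gives
\begin{equation*}
\int_0^T u(x)\,\partial_t \varphi_j(x,t)\,dt \;=\; -\,u(x)\,\varphi_j(x,0),
\end{equation*}
since $\varphi_j(\cdot,T)\equiv 0$. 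Plugging this into the identity in Definition \ref{def1} and using Fubini, the resulting equation is precisely the time-integral of the elliptic weak formulation against the test function $\varphi_j(\cdot,t)$, which is valid at every $t$ because $u$ solves \eqref{Ellip} weakly. Hence $\tilde u$ is a weak solution to problem $j$ for \eqref{main}, and since $T>0$ was arbitrary, it is a global weak solution. Under the hypothesis $f\geq 0$, $f\not\equiv 0$, and in view of the positivity of the weights $A(x)$ in \eqref{a(x).gen} (positive harmonic/biharmonic functions, or $A\equiv 1$), one has $\int_{D^c} A(x) f(x)\,dx > 0$. Thus Theorem \ref{th.n=3}(i) or (ii) applies and yields a contradiction.

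For the existence claim (iii), the conclusion is already contained in Theorem \ref{th.n=3}(iii), which produces a global solution of \eqref{main} that is a stationary (time-independent) positive function for sufficiently small $f>0$ and $u_0>0$. Specializing $\partial_t u \equiv 0$ in \eqref{main} yields $\Delta^2 u = u^p + f$ in $D^c$ together with the corresponding boundary conditions \eqref{1.1}--\eqref{1.3}, so this function is a positive solution to problem $j$ for the elliptic equation \eqref{Ellip}.

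The main technical point to check is the rigorous equivalence of the two weak formulations under each of the six boundary settings: one must verify that the class of admissible test functions $\varphi_j(\cdot,t)$, viewed as a family in $x$ parametrized by $t$, is rich enough to characterize weak elliptic solutions satisfying the same boundary conditions. This amounts to a density/approximation check (separating $t$-dependence, e.g.\ by writing $\varphi_j(x,t)=\chi(t)\psi(x)$ with smooth cut-offs $\chi$ and admissible spatial test functions $\psi$) and is straightforward for each of the six boundary conditions listed under (iii) of Definition \ref{def1}; no genuinely new analytic estimate beyond those in the proof of Theorem \ref{th.n=3} is required.
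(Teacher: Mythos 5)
Your argument is correct, but it takes a different route from the paper. The paper disposes of Corollary \ref{Cor1} by rerunning the test-function machinery of Theorem \ref{th.n=3} directly on the elliptic inequality (``carried out in a manner similar to the proof of Theorem \ref{th.n=3}''), i.e.\ one repeats the rescaled estimates of Section 2 with the time variable suppressed. You instead reduce the corollary to the theorem: a weak solution $u$ of \eqref{Ellip} is extended constantly in $t$, the identity $-\int_0^T u\,\partial_t\varphi_j\,dt=u\,\varphi_j(\cdot,0)$ cancels the initial-data term against the $\partial_t$ term, and integrating the elliptic identity against $\varphi_j(\cdot,t)$ for each fixed $t$ (each such slice being an admissible spatial test function) recovers the parabolic weak formulation, so Theorem \ref{th.n=3}(i)--(ii) applies with $u_0=u\in L^1_{loc}$; part (iii) is literally contained in Theorem \ref{th.n=3}(iii) since the solution produced there is stationary. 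This is logically sound and shorter, and the positivity of the weights $A(x)$ on $D^c$ (which you invoke to get $\int_{D^c}A f>0$ from $f\ge 0$, $f\not\equiv 0$) does hold for the harmonic and biharmonic functions \eqref{A(x)} and \eqref{H(x)}. Two remarks. First, your closing paragraph about a ``density/approximation check'' is unnecessary for the direction you actually use: you only need that each $t$-slice of a parabolic test function is an admissible elliptic test function, which is immediate from Definition \ref{def1}; the converse richness of the test class is never needed. Second, your reduction genuinely requires $\int_{D^c}A f>0$, so unlike the paper's direct argument it cannot reach the degenerate case $f\equiv 0$ alluded to in Remark 1.4(a); under the corollary's stated hypotheses (together with the standing assumption $f\not\equiv 0$) this is not a gap, but it is the one place where the direct method is strictly stronger.
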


\begin{Rem}
Analyzing Theorem \ref{th.n=3}, we can draw the following conclusions:
\begin{itemize}
\item[(a)] The critical exponent $$p_{Crit}:=\left\{\begin{array}{cc}
\infty, & N=1,2,3,4, \\{}\\
\frac{N}{N-4},  & N\geq 5\end{array}\right.$$ for exterior problems I-VI coincide with the critical exponent of problem \eqref{main} when $D^c=\mathbb{R}^N$, which was obtained by Majdoub in \cite{majdoub}. Also comparing Theorem \ref{th.n=3} and Corollary \ref{Cor1}, it is easy to notice that parabolic equation \eqref{main} and biharmonic Poisson equation \eqref{Ellip} have the same critical exponents $p_{Crit}$. It is easy to see that function $f(x)$ does not affect the critical exponent of the biharmonic Poisson equation, and the results of Corollary \ref{Cor1} remain true even in case $f(x)=0$.
\item[(b)] Part (iii) of Theorem \ref{th.n=3} states the global existence only for stationary positive solutions. Unfortunately, at the moment we do not know any information about global non-stationary (positive or sign-changing) solutions. This is due to technical difficulties associated with the study of Green's functions of problems I-VI, which require separate studies. For now these questions are left open. 
\item[(c)] Of particular interest is the study of critical behaviors in case $$\int_{D^c}f(x)A(x)dx=0,\,f\not\equiv 0,$$ which is not included in Theorem \ref{th.n=3}. The approaches we use do not allow us to study this case, therefore this question also remains open. As far as we know, a similar question was posed by Bandle et al. in \cite{Bandleandlevine} for the classical heat equation $u_t-\Delta u=|u|^p+f(x),\,x\in \mathbb{R}^N$, and still remains open.
\item[(d)] Unlike case $D^c\equiv \mathbb{R}^N$, where assumption $\int\limits_{\mathbb{R}^N}f(x)dx>0$ is imposed on $f$, in the case of an exterior problem it is assumed that $\int\limits_{D^c}f(x)A(x)dx>0$. Here $A(x)$ is given by \eqref{a(x).gen}.
\end{itemize}
\end{Rem}

In part (iii) of Theorem \ref{th.n=3}, it is said that for $p>p_{Crit},$ a global solution exists only for sufficiently small $u_0$ and $f$. Therefore, it makes sense to consider this case separately and study the second critical exponent in the sense of Lee-Ni, which we formulate below. Before, for $\omega<N$ we introduce the following sets
$$
\mathcal{I}_\omega^{+}=\left\{f \in C\left(D^c\right): f \geq 0 \quad \text{and} \quad f(x) \geq C|x|^{-\omega} \text { for sufficiently large }|x|\right\},
$$
and
$$
\mathcal{I}_\omega^{-}=\left\{f \in C\left(D^c\right): f>0 \quad \text{and} \quad f(x) \leq C|x|^{-\omega} \text { for sufficiently large }|x|\right\},
$$
where $\omega<N$ and $C>0$ is an arbitrary constant.

Now it is time to formulate the second main result.
\begin{Th}[Second critical exponents]\label{th.2}
    Suppose that $u_0 \in L_{l o c}^1(D^c)$, and $f \in C( D^c),  f \not \equiv 0$, and $p>p_{Crit}$. 
    \begin{itemize}
        \item[(i)] Let $f \in \mathcal{I}_\omega^{+}$ and $\omega<0$, then problems I-VI do not admit global positive weak solutions;\\
        \item[(ii)] \text {Assume that} $f \in \mathcal{I}_\omega^{+}$ and
        $$0\leq \omega< \omega_{Crit}:=\frac{4p}{p-1},$$
        then problems I-VI possess no global positive weak solutions;\\
        \item[(iii)] If $$\omega_{Crit}\leq \omega<N,$$
        then problems I-VI admit positive global (stationary) classical solutions for some $f \in \mathcal{I}_\omega^{-}$ and for some $u_0>0$.
     \end{itemize}
\end{Th}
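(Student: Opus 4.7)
\textit{Parts (i) and (ii): nonexistence.} The plan is a unified test-function argument of Mitidieri--Pohozaev type applied to Definition \ref{def1}. Suppose for contradiction that problem $j\in\{\mathrm{I},\ldots,\mathrm{VI}\}$ admits a global positive weak solution $u$, and insert
$$
\varphi_j(x,t)=A_j(x)\,\eta(|x|/R)^{\ell}\,\chi(t/R^{4})^{\ell},\qquad \ell\geq 4p',\quad p':=\tfrac{p}{p-1},
$$
where $A_j$ is the function in \eqref{a(x).gen} engineered to satisfy the trace condition (iii) for problem $j$ and to be nonnegative in $D^c$ with a strictly positive limit at infinity, and $\eta,\chi\in C^\infty_c([0,1))$ are standard cutoffs equal to $1$ on $[0,\tfrac12]$. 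Applying $\varepsilon$-Young's inequality to $-\int_Q u\,\partial_t\varphi_j$ and $\int_Q u\,\Delta^2\varphi_j$ and absorbing the $|u|^p$ contribution into the left side produces
$$
\int_Q f\,\varphi_j\,dx\,dt\;\leq\;C\int_Q\bigl(|\Delta^2\varphi_j|^{p'}+|\partial_t\varphi_j|^{p'}\bigr)\varphi_j^{1-p'}\,dx\,dt.
$$
The boundedness of $A_j$ together with the scaling $|\Delta^2\varphi_j|,|\partial_t\varphi_j|\lesssim R^{-4}$ on the cutoff support gives $\mathrm{RHS}\lesssim R^{N+4-4p'}$. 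Using $f(x)\geq c|x|^{-\omega}$ for $|x|\geq R_0$ and $A_j\geq c_\infty>0$ there, on the inner region $\{|x|\leq R/2,\,t\leq R^4/2\}$ one gets $\mathrm{LHS}\gtrsim R^4\int_{R_0\leq|x|\leq R/2}|x|^{-\omega}\,dx\sim R^{N+4-\omega}$ (recall $\omega<N$). Passing to the limit $R\to\infty$ in $R^{N+4-\omega}\lesssim R^{N+4-4p'}$ forces $\omega\geq 4p'=\omega_{Crit}$, contradicting both $\omega<0$ in part (i) and $0\leq\omega<\omega_{Crit}$ in part (ii).

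\textit{Part (iii): explicit stationary solution.} The plan is to begin from an explicit radial ansatz. Set $\alpha:=\omega-4$, so that $\omega_{Crit}\leq\omega<N$ places $\alpha\in[4/(p-1),N-4)$. The radial identity
$$
\Delta^2\!\bigl(|x|^{-\alpha}\bigr)=C_\alpha\,|x|^{-\alpha-4},\qquad C_\alpha:=\alpha(\alpha+2)(N-2-\alpha)(N-4-\alpha)>0,
$$
suggests taking $\overline u(x):=\varepsilon|x|^{-\alpha}$ and defining the forcing
$$
f(x):=\Delta^2\overline u(x)-\overline u(x)^p=\bigl(C_\alpha\varepsilon-\varepsilon^p|x|^{-[\alpha(p-1)-4]}\bigr)|x|^{-\omega}.
$$
Since $\alpha(p-1)-4\geq 0$ and $|x|\geq 1$, the bracket lies in $(\varepsilon(C_\alpha-\varepsilon^{p-1}),C_\alpha\varepsilon]$ whenever $0<\varepsilon<C_\alpha^{1/(p-1)}$; hence $f>0$ in $D^c$ and $f\leq C_\alpha\varepsilon|x|^{-\omega}$, placing $f$ in $\mathcal I_\omega^-$. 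In the whole-space setting $\overline u$ is already a classical stationary solution; to enforce the six homogeneous boundary conditions of problems I--VI one subtracts from $\overline u$ a solution of the corresponding linear biharmonic boundary-value problem matching the traces of $\overline u$ on $\partial D$, and runs a Banach fixed-point argument on the weighted space $\{v:\sup_{D^c}|x|^\alpha|v(x)|\leq M\}$, as in the proof of Corollary \ref{Cor1}(iii). Smallness of $\varepsilon$ together with the supercritical decay $\alpha>4/(p-1)$ makes the nonlinear term contractive and yields an exact positive classical stationary solution of problem $j$; taking $u_0$ equal to this solution gives the global positive solution required.

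\textit{Main obstacle.} The hardest step is the boundary-correction stage of part (iii). Unlike the second-order case, the biharmonic operator under the non-Dirichlet conditions of problems III--V lacks a classical comparison principle, so the sub/super-solution shortcut must be replaced by a linear resolvent estimate for the Green function of $\Delta^2$ with each of the six homogeneous boundary conditions, which then drives the fixed-point argument. Once that estimate is available, the supercritical condition $\alpha>4/(p-1)$ is exactly what makes the map $v\mapsto G_j\ast(v^p+f)$ a contraction on a small weighted ball, and the scheme closes uniformly across all six problems.
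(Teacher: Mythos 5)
Your treatment of parts (i) and (ii) is essentially the paper's own argument: the same rescaled test function $\varphi_j=A_j\,\xi^{\ell}(|x|/R)\,\zeta^{\ell}(t/T)$, the same $\varepsilon$-Young absorption leading to \eqref{after Young}, the same upper bound $R^{N+4-4p'}$ (the paper writes it as $TR^{N-4p'}+T^{1-p'}R^N$ with $T=R^j$, $j>4$, rather than $T=R^4$, which changes nothing), and the same lower bound $\int f\varphi_j\gtrsim T\,R^{N-\omega}$ from $f\in\mathcal I_\omega^{+}$, yielding $\omega\geq 4p'=\omega_{Crit}$ in the limit and hence the contradiction. Your observation that $A_j$ is bounded above and below away from zero for $N\geq 5$ is exactly what makes the argument uniform over the six problems.

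For part (iii) you diverge from the paper, and your route is not closed. The paper simply reuses the explicit function $v(x)=\varepsilon|x|^{-m}$ from the supercritical case of Theorem \ref{th.n=3}, now with $m\in(\omega-4,\,N-4)$, checks that $f:=\Delta^2 v-v^p>0$ satisfies $f\leq C|x|^{-m-4}\leq C|x|^{-\omega}$ (so $f\in\mathcal I_\omega^{-}$), verifies the signs of $v$, $\partial_n v$, $-\Delta v$, $\partial_n\Delta v$ on $\partial D$, and invokes the comparison principle already used there; no boundary correction or fixed point is needed. Your ansatz with $\alpha=\omega-4$ and the identity $\Delta^2(|x|^{-\alpha})=C_\alpha|x|^{-\alpha-4}$ is correct and even slightly sharper (it gives $f\sim|x|^{-\omega}$ exactly), but the step you yourself flag as the main obstacle --- the resolvent/Green-function estimate for $\Delta^2$ under each of the six homogeneous boundary conditions that would drive the contraction $v\mapsto G_j\ast(v^p+f)$ --- is left entirely unproven, and there is no Corollary \ref{Cor1}(iii) fixed-point argument in the paper to appeal to. As written, part (iii) is therefore a conditional plan rather than a proof; to match the paper you should instead keep $\overline u$ as a supersolution with boundary traces of the correct sign and conclude via the sub/supersolution (comparison) argument of Theorem \ref{th.n=3}(iii), which is the route the authors take.
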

\begin{Rem}
Since $p > p_{Crit}:=\frac{N}{N-4},$ we have that the set of $\omega$ satisfying $\frac{4p}{p-1}\leq \omega<N$ is nonempty. The second critical exponent or critical exponent in the sense of Lee-Ni $\omega_{Crit}:=\frac{4p}{p-1}$ of problems I-VI depends on the forcing term $f$, but not on the initial data $u_0$.
\end{Rem}

Let us now study the equation \eqref{main} without forcing term $f(x)$, that is,
\begin{equation}\label{main1}
\left\{\begin{array}{lll}
u_t+\Delta^2 u=|u|^{p}, & \text { in }  D^c \times(0, \infty) , \\{}\\
u(x, 0)=u_0(x), & \text { in } D^c. 
\end{array}\right.
\end{equation}
First, we give a definition of a weak solution to problems I-VI for equation \eqref{main1}.
  \begin{Def}\label{def2}
      We say that $u$ is a  weak solution to  problems   I-VI if there exists $0<T<\infty$ such that $u \in L_{l o c}^p\left(\overline{ Q}\right)$, and satisfies
\begin{equation*}
    \begin{aligned}
        \int_Q  |u|^{p}\varphi_j\mathrm{d}x \mathrm{d}t +&\int_{D^c} u_0(x)\varphi_j(x,0)\mathrm{d}x = -\int_Q u\partial_t\varphi_j\mathrm{d}x \mathrm{d}t +\int_Q u\Delta^2 \varphi_j\mathrm{d}x \mathrm{d}t,
    \end{aligned}
\end{equation*}
 for every function $\varphi_j \in C_{t, x}^{1,4}\left(\overline{ Q}\right) \cap C\left(Q\right)$, $\varphi \geq 0,$ which has the same properties as in Definition \ref{def1}.\end{Def}
Then we get the following nonexistence result.
\begin{Th}\label{th.f}
Assume that $A(x)$ is as \eqref{a(x).gen} and  $u_0 \in L_{l o c}^1(D^c)$. If $\int_{D^c}u_0(x)A(x)dx\geq 0,\,\,u_0\not\equiv 0,$ then problems I-VI for \eqref{main1} admit no global weak solution while $$1<p\leq p_{Fuj}:=1+\frac{4}{N}.$$
\end{Th}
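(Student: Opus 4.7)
The plan is to argue by contradiction via a Mitidieri--Pohozaev type test-function method, adapted to the biharmonic exterior setting with the help of the weight $A(x)$ from Theorem \ref{th.n=3}. Suppose a global weak solution $u$ exists. For parameters $R>1$ and $T>0$, I would use test functions of the form $\varphi(x,t)=A(x)\Psi_R(x)\Theta_T(t)$, where $\Psi_R,\Theta_T\ge 0$ are sufficiently high powers of standard smooth cutoffs, with $\Psi_R\equiv 1$ on $\{|x|\le R/2\}$ and supported in $\{|x|\le R\}$ (and $\Psi_R\equiv 1$ in a neighbourhood of $\partial D$), and $\Theta_T\equiv 1$ on $[0,T/2]$, supported in $[0,T]$. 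Because $A$ is harmonic for Problems I and VI, biharmonic for Problem II, and $\equiv 1$ for Problems III--V, and because $\Psi_R\equiv 1$ near $\partial D$, the product $\varphi$ satisfies all boundary compatibility requirements (iii) of Definition \ref{def2} for each of the six problems and is nonnegative.

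Inserting $\varphi$ into the weak formulation and applying Young's inequality to estimate $|u\,\partial_t\varphi|$ and $|u\,\Delta^2\varphi|$ by a small multiple of $|u|^p\varphi$ plus corrective terms, one obtains
\begin{equation*}
\tfrac12\int_Q|u|^p\varphi\,dx\,dt+\int_{D^c}u_0(x)\,A(x)\,\Psi_R(x)\,dx\le C\!\int_Q\!\frac{|\partial_t\varphi|^{p'}}{\varphi^{p'-1}}\,dx\,dt+C\!\int_Q\!\frac{|\Delta^2\varphi|^{p'}}{\varphi^{p'-1}}\,dx\,dt,
\end{equation*}
with $p'=p/(p-1)$. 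The key structural point is that since $A$ is (bi)harmonic, $\Delta^2(A\Psi_R)$ collects only terms in which at least one derivative falls on $\Psi_R$, so it is supported on the annulus $R/2\le|x|\le R$ and bounded there by $CR^{-4}$ (using that $A$ and its derivatives are uniformly controlled in that region). A direct scaling then produces a right-hand side of order $T^{1-p'}R^N+T\,R^{N-4p'}$, and the parabolic balance $T=R^4$ collapses this to $CR^{N+4-4p'}=CR^{(Np-N-4)/(p-1)}$.

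For $1<p<p_{Fuj}=1+4/N$ this exponent is strictly negative, so letting $R\to\infty$ forces both $\int_{D^c}u_0A\,dx\le 0$ and $\int_0^\infty\!\!\int_{D^c}|u|^pA\,dx\,dt=0$; combined with $\int u_0A\ge 0$ and $u_0\not\equiv 0$, this gives the required contradiction after testing the weak formulation against smooth functions compactly supported in the interior of $D^c$. The main obstacle is the critical case $p=p_{Fuj}$, where the exponent vanishes and the naive estimate only yields the a priori bound $\int_{[0,\infty)\times D^c}|u|^pA\,dx\,dt<\infty$. To close this borderline case I would refine the Young step by Hölder's inequality: since the two ``bad'' integrands are supported on the parabolic shell $\Omega_{R,T}=\{R/2\le|x|\le R\}\cup\{T/2\le t\le T\}$, one bounds $\int|u|(|\partial_t\varphi|+|\Delta^2\varphi|)$ by $\bigl(\int_{\Omega_{R,T}}|u|^pA\,dx\,dt\bigr)^{1/p}$ times a uniform scaling factor, and the finiteness of the global integral together with absolute continuity forces this upper bound to vanish along $T=R^4\to\infty$, yielding the same contradiction. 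A secondary technicality is verifying case by case the pointwise bound $|\Delta^2(A\Psi_R)|\le CR^{-4}$ on the annulus and the boundary compatibility across all six problems.
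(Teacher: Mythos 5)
Your proposal is correct and follows essentially the same route as the paper: a test function of the form $A(x)\times(\text{spatial cutoff})\times(\text{temporal cutoff})$, Young's inequality leading to the bound $C(T^{1-p'}R^N+TR^{N-4p'})$ collapsed by $T=R^4$ to $CR^{N-\frac{4}{p-1}}$ for the subcritical range, and in the critical case $p=p_{Fuj}$ the same refinement via H\"older's inequality on the annular/terminal supports combined with the a priori bound $u\in L^p$ and absolute continuity of the integral. The only (immaterial) differences are the precise cutoff radii and that you spell out the final contradiction with $u_0\not\equiv 0$ slightly more carefully than the paper does.
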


\begin{Rem}
 Theorem \ref{th.2} assumes that $\int_{D^c}u_0(x)A(x)dx\geq 0,$ $u_0\not\equiv 0$. This means that the initial data $u_0$ that satisfy $\int_{D^c}u_0(x)A(x)dx=0,$ but $u_0\not\equiv 0$ is also included in this class of functions.
\end{Rem}
\begin{Rem} Theorem \ref{th.f} does not contain results for the supercritical case $p>1+\frac{4}{N}$. However, the number $p_{Fuj}:=1+\frac{4}{N}$ coincides with the Fujita critical exponent for the biharmonic heat equation (see \cite{Gala, Caristi, gazolla, Egorov1}) $$u_t+\Delta^2 u=|u|^{p}$$ given in the whole space $\mathbb{R}^N$. Therefore, we conjecture that, if $$p>p_{Fuj}=1+\frac{4}{N},$$ then the problems I-VI admit global in-time solutions for some sufficiently small $u_0$. The proof of case $p>1+\frac{4}{N}$ is based on the study of the properties of Green's functions of problems I-VI, which requires separate consideration.
\end{Rem}

By the standard argument, we can verify that there exists a unique local-in-time nonnegative solution to the \eqref{main1}.  Therefore, let us define an upper bound for the lifespan of a local solution $u$ as 
$$
\begin{aligned}
    T_\varepsilon=\text{LifeSpan} (u_\varepsilon) :=\sup _{T>0} &\{ u \text { is a weak solution to \eqref{main1}  with I-VI boundary}\\
&\text {  conditions in }[0, T) \times D^c\} \text {. }
\end{aligned}
$$
Consequently, we can present the following assertion.
\begin{Th}\label{lf}
    Let $u_\varepsilon$ is the unique local weak solution to \eqref{main1}  with I-VI boundary conditions. Assume that $1<p \leq p_{\text {Fuj }}$, $N\geq 3$ ,  and $u_0 \in L_{l o c}^1(D^c)$, $$\int_{D^c}u_0(x)A(x)dx\geq 0,\,\,u_0\not\equiv 0,$$ where $A(x)$ is given by \eqref{a(x).gen}. Then, there exists $\varepsilon_0>0$ such that for any $\varepsilon \in\left(0, \varepsilon_0\right]$ it holds

$$
T_\varepsilon  \leq \begin{cases}C \varepsilon^{-\left(\frac{1}{p-1}-\frac{N}{4}\right)^{-1}} & \text { if } p \in\left(1, p_{\text {Fuj }}\right), \\{}\\ \exp \left(C \varepsilon^{-(p-1)}\right) & \text { if } p=p_{\text {Fuj }}.\end{cases}
$$
\end{Th}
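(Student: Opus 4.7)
The plan is to adapt the test-function method of Ikeda--Sobajima \cite{Ikeda1} to the biharmonic exterior setting. Let $u_\varepsilon$ denote the local weak solution with initial data $\varepsilon u_0$, and set $p'=p/(p-1)$. I propose a one-parameter family of admissible test functions of the form
$$
\varphi_T(x,t)=A(x)\,\xi\!\left(\tfrac{|x|}{T^{1/4}}\right)^{\!\ell}\eta\!\left(\tfrac{t}{T}\right)^{\!k},
$$
where $A(x)$ is the (bi)harmonic profile given by \eqref{a(x).gen}, $\xi\in C^\infty([0,\infty))$ is a cutoff with $\xi\equiv 1$ on $[0,1]$ and $\xi\equiv 0$ on $[2,\infty)$, $\eta(s)=(1-s)_+^k$, and the exponents $k,\ell$ are chosen large enough (e.g.\ $k>p'$, $\ell\geq 4p'$) so that the weights $|\Delta^2\varphi_T|^{p'}\varphi_T^{-p'/p}$ and $|\partial_t\varphi_T|^{p'}\varphi_T^{-p'/p}$ are integrable. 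By the definition of $A$ in \eqref{a(x).gen}, $\varphi_T$ satisfies the dual boundary condition (iii) of Definition \ref{def2} for each of the six problems (for $T$ large, $\xi\equiv 1$ near $\partial D$), and is therefore admissible.

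Inserting $\varphi_T$ into the weak formulation and applying Young's inequality,
$$
\int_Q u_\varepsilon\bigl(\Delta^2\varphi_T-\partial_t\varphi_T\bigr)\,\mathrm{d}x\,\mathrm{d}t\leq \tfrac12\int_Q|u_\varepsilon|^p\varphi_T\,\mathrm{d}x\,\mathrm{d}t+C\int_Q\frac{|\Delta^2\varphi_T-\partial_t\varphi_T|^{p'}}{\varphi_T^{\,p'/p}}\,\mathrm{d}x\,\mathrm{d}t,
$$
and absorbing the first right-hand side term into the LHS of the identity from Definition \ref{def2}, one obtains the master inequality
$$
\varepsilon\int_{D^c}u_0(x)A(x)\xi\!\left(\tfrac{|x|}{T^{1/4}}\right)^{\!\ell}\mathrm{d}x\leq C\int_Q\frac{|\Delta^2\varphi_T-\partial_t\varphi_T|^{p'}}{\varphi_T^{\,p'/p}}\,\mathrm{d}x\,\mathrm{d}t.
$$
The hypothesis $\int u_0 A\,\mathrm{d}x\geq 0$ together with $u_0\not\equiv 0$ forces, by monotone convergence (with a small perturbation of $A$ through an auxiliary solution satisfying the same boundary conditions handling the equality case), the LHS to be bounded below by $c_0\varepsilon$ for all $T\geq T_\star$.

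Since $\Delta^2 A\equiv 0$ in $D^c$, the bilaplacian $\Delta^2\varphi_T$ reduces to commutator terms of the form $\nabla^j A\cdot\nabla^{4-j}\xi_R^\ell\cdot\eta_T^k$ supported on the annulus $\{T^{1/4}\leq|x|\leq 2T^{1/4}\}$. The standard degenerate-weight estimate $|\nabla^m\xi_R^\ell|^{p'}\xi_R^{-\ell p'/p}\leq CR^{-mp'}$ is valid provided $\ell\geq 4p'$; combined with boundedness of $A,\nabla A,\nabla^2 A,\nabla^3 A$ on that annulus and the identity $p'-p'/p=1$, the scaling $y=x/T^{1/4}$, $s=t/T$ gives
$$
\int_Q\frac{|\Delta^2\varphi_T|^{p'}+|\partial_t\varphi_T|^{p'}}{\varphi_T^{\,p'/p}}\,\mathrm{d}x\,\mathrm{d}t\leq C\,T^{\,1+N/4-p'}=C\,T^{\,N/4-1/(p-1)}.
$$
For $p\in(1,p_{\mathrm{Fuj}})$ the exponent is strictly negative, and inversion produces the first lifespan bound $T_\varepsilon\leq C\,\varepsilon^{-(1/(p-1)-N/4)^{-1}}$. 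For the critical case $p=p_{\mathrm{Fuj}}$ the exponent vanishes and the naive bound becomes useless; I would then replace the temporal cutoff $\eta(t/T)$ by the logarithmic multi-scale cutoff $\eta\!\bigl(\log(t/T_0)/\log(T/T_0)\bigr)^{k}$ on $[T_0,T]$, gaining a factor $(\log(T/T_0))^{-1/(p-1)}$ in the weight integral. The improved inequality $c_0\varepsilon\leq C(\log(T/T_0))^{-1/(p-1)}$ then inverts to $T_\varepsilon\leq\exp\bigl(C\varepsilon^{-(p-1)}\bigr)$.

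The main obstacle I expect is the uniform handling of all six boundary conditions through the single vehicle \eqref{a(x).gen}: one must verify quantitatively that $A$ and its derivatives up to order three are bounded, and that $A$ is bounded below on a fixed reference set, on the annulus $[T^{1/4},2T^{1/4}]$ uniformly in $T$ and across Problems I--VI. This is particularly delicate for Problem II, where the conditions $A=\partial_n A=0$ on $\partial D$ rule out a mere harmonic $A$ and force the biharmonic profile \eqref{H(x)}; its higher-order derivatives must be controlled separately. A secondary subtlety is the critical-case logarithmic refinement, which requires the modified test function to remain compatible with the boundary condition (iii); this is automatic since only the temporal cutoff is altered and the spatial factor $A(x)\xi_R^\ell$ is unchanged.
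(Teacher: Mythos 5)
Your subcritical argument is sound and is essentially the classical direct test-function computation: with the product cutoff $A(x)\xi(|x|/T^{1/4})^{\ell}\eta(t/T)^{k}$ the weight integral scales like $T^{1+N/4-p'}=T^{N/4-\frac{1}{p-1}}$, and for $p<p_{\mathrm{Fuj}}$ the exponent is negative, so $c_0\varepsilon\leq CT^{N/4-\frac{1}{p-1}}$ inverts to the first bound. (Both you and the paper are somewhat cavalier about why the left-hand side is bounded below by $c_0\varepsilon$ with $c_0>0$ uniformly in $T$ when only $\int_{D^c}u_0A\,\mathrm{d}x\geq 0$ is assumed, but I will not press that point.) The paper proceeds differently even here: it uses a single combined space--time cutoff $\psi_R(x,t)=[\phi(((|x|-1)^4+t)/R)]^{4p'}$ together with the companion function $\psi_R^*$ supported where $\phi$ is non-constant, keeps the factor $\bigl(\int|u|^p A\psi_R^*\bigr)^{1/p}$ on the right-hand side instead of discarding it, and then invokes the Ikeda--Sobajima iteration lemma (Lemma \ref{ikedalemma}), which delivers both the subcritical and the critical bound in one stroke.

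The genuine gap is your critical case. Replacing only the temporal cutoff by $\eta\bigl(\log(t/T_0)/\log(T/T_0)\bigr)^{k}$ while keeping the spatial cutoff at the fixed scale $T^{1/4}$ does not produce the claimed factor $(\log(T/T_0))^{-1/(p-1)}$. Indeed, $\partial_t$ of the logarithmic cutoff is supported on $t\in[\sqrt{T_0T},\,T]$ and has size $\sim(t\log(T/T_0))^{-1}$, so the time part of the weight integral is
\begin{equation*}
(\log(T/T_0))^{-p'}\int_{\sqrt{T_0T}}^{T}t^{-p'}\,\mathrm{d}t\;\sim\;(\log(T/T_0))^{-p'}\,(T_0T)^{\frac{1-p'}{2}},
\end{equation*}
and after multiplying by the spatial volume $T^{N/4}$ one gets, at $p=p_{\mathrm{Fuj}}$ (so $p'-1=N/4$), a quantity of order $T^{N/4-N/8}=T^{N/8}$ up to logarithms --- it \emph{diverges} rather than decays, because the support of $\partial_t\eta$ has migrated to times $t\sim\sqrt{T_0T}$ where $t^{-p'}$ is much larger than $T^{-p'}$, while the spatial volume has not shrunk accordingly. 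The logarithmic-gain mechanism you have in mind requires the spatial and temporal scales to be linked (as in the paper's $\phi(((|x|-1)^4+t)/R)$, where both the $\partial_t$ and $\Delta^2$ contributions are confined to the set $\{((|x|-1)^4+t)/R\in[1/2,1]\}$), and, more fundamentally, the critical case cannot be closed by a bound of the form $c_0\varepsilon\leq C\cdot(\text{small})$ at a single scale: one must retain the term $\bigl(\int|u|^pA\psi_R^*\bigr)^{1/p}$ on the right, view the resulting inequality as a differential inequality in the scale parameter $R\in[R_1,T)$, and integrate $\mathrm{d}R/R$ to extract $\log T\leq C\varepsilon^{-(p-1)}$. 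That is precisely what Lemma \ref{ikedalemma} encapsulates; without it (or an equivalent multi-scale argument), your critical-case bound $T_\varepsilon\leq\exp(C\varepsilon^{-(p-1)})$ is not established.
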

\begin{Rem}
At the moment we do not know the lifespan estimates for local solutions in the one and two-dimensional cases. This is because the approach we use is not suitable for these cases. 
\end{Rem}

\section{Some classes of test functions and associated useful estimates}

Before diving into the main topic, there are a few things we need to discuss first. Especially, we are interested in the properties of the test functions. In each subsection, we define a test function depending on the problem's type. Then we provide a convenient estimate of the test functions,  which will be advantageous in a proof.

\subsection{Properties of $\boldsymbol{\varphi_1(x,t)}$}Let $H(x)$ be a harmonic function in $D^c$ given by
\begin{equation}\label{A(x)}
H(x)= \begin{cases}\ln |x|, & \text { if } \quad N=2, \\ 1-|x|^{2-N}, & \text { if } \quad N \geq 3.\end{cases}
\end{equation}

\begin{Lem}\label{a.n2.estimate}
The function $H(x)$  satisfies the following properties:\\
(i) $H(x) \geq 0$;\\
(ii) $\Delta H(x)=\Delta^2 H(x)=0$ in $D^c$;\\
(iii) $H(x)=\Delta H(x)=\frac{\partial\left( \Delta H(x)\right)}{\partial \nu}=0$ on $\partial D$.
\end{Lem}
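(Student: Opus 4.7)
The plan is to verify each of the three claims by direct computation, exploiting the fact that $H(x)$ is built from the radial fundamental solution of the Laplacian and hence its harmonicity is classical.

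First I would handle (i) by noting that on $D^c = \{|x| \geq 1\}$ we have $|x|\geq 1$, whence $\ln|x|\geq 0$ in dimension $N=2$, and $|x|^{2-N}\leq 1$ for $N\geq 3$, giving $1-|x|^{2-N}\geq 0$. So nonnegativity in both cases is immediate.

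For (ii), I would invoke the well-known fact that $\ln|x|$ (for $N=2$) and $|x|^{2-N}$ (for $N\geq 3$) are harmonic away from the origin; this can be checked by a one-line radial computation using the formula
\begin{equation*}
\Delta\bigl(g(|x|)\bigr)=g''(|x|)+\frac{N-1}{|x|}g'(|x|).
\end{equation*}
Since adding the constant $1$ and changing sign preserve harmonicity, $\Delta H(x)=0$ throughout $D^c$. Consequently $\Delta^2 H(x)\equiv 0$ in $D^c$ as well.

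For (iii), evaluating on $\partial D=\{|x|=1\}$ yields $\ln|x|=0$ and $1-|x|^{2-N}=0$, so $H\equiv 0$ on $\partial D$. Because $\Delta H\equiv 0$ in all of $D^c$, it restricts to $0$ on $\partial D$, and any directional (in particular normal) derivative of the identically-zero function $\Delta H$ vanishes, giving $\partial_\nu(\Delta H)=0$ on $\partial D$.

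There is no real obstacle here; the entire lemma reduces to recalling the harmonicity of the radial fundamental solution and substituting $|x|=1$. The only minor care needed is to present the $N=2$ and $N\geq 3$ cases uniformly so that the conclusions $\Delta H=\Delta^2 H\equiv 0$ cover both at once before evaluating on $\partial D$.
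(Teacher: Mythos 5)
Your proposal is correct and follows essentially the same route as the paper: direct verification of nonnegativity from $|x|\geq 1$, harmonicity of the radial fundamental solution for (ii), and evaluation at $|x|=1$ together with $\Delta H\equiv 0$ for (iii). You are in fact slightly more explicit than the paper (which simply declares each property clear or obvious), in particular in justifying $\partial_\nu(\Delta H)=0$ via the vanishing of $\Delta H$ on all of $D^c$ rather than only on the boundary.
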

\begin{proof}
    The first property (i) is clear from the \eqref{A(x)}.  The second property (ii) implies immediately from the harmonic function $$\Delta H(x)=\Delta^2 H(x)=0.$$  Furthermore, the third property (iii) is obvious because of $H(x)=0$ on $\partial D$.
\end{proof}
\noindent  Test functions are defined as follows:
\begin{equation}\label{phi}
\varphi_1(t, x)=\eta(t) \psi(x), \quad(t, x) \in  Q.    
\end{equation}

\noindent For sufficiently large $T$

\begin{equation}\label{zeta}
    \eta(t)=\zeta^{\ell}\left(\frac{t}{T}\right), \quad t>0,
\end{equation}
\noindent where $\ell\geq\frac{4p}{p-1}$, and $\zeta \in C^{\infty}(\mathbb{R})$ is a cut-off function satisfying
\begin{equation}\label{zeta.s}
   \zeta(s)= \begin{cases}1, & \text { if } 0\leq s \leq 
   \frac{1}{2}, \\ \searrow, & \text { if } \frac{1}{2}<s<1, \\ 0, & \text { if } s \geq 1,\end{cases} 
\end{equation}
\noindent and for sufficiently large $R$ 

\begin{equation}\label{phi1}
   \psi(x)=H(x) \xi^{\ell}\left(\frac{|x|}{R}\right), \quad x \in D^c,
\end{equation}

\noindent where $H(x)$ is harmonic function \eqref{A(x)}, and $\xi: \mathbb{R} \rightarrow[0,1]$ is a smooth function 
\begin{equation}\label{xi.s}
   \xi(s)= \begin{cases}1, & \text { if } 0\leq s \leq 1, \\ \searrow, & \text { if } 1<s<2, \\ 0, & \text { if } s \geq 2.\end{cases} 
\end{equation}
In addition, we require the test function to satisfy the following properties:
$$|\xi'(s)|\leq C, \quad |\xi''(s)|\leq C$$ and $$\xi'''(s)|\leq C ,\quad |\xi^{(IV)}(s)|\leq C. $$

Moreover, the harmonic function \eqref{A(x)} implies to consider two cases separately, when $N=2$ and when $N\geq3$. 

\subsection{The case N = 2}
\begin{Lem}\label{A(x.estimate).N2}
     For $R<|x|<2 R$, the following estimate holds
$$
 H(x) \leq C \ln R,$$ and $$|\nabla H(x)| \leq C R^{-1}.
$$
\end{Lem}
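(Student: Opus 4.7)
The plan is to prove both bounds by direct computation using the explicit form of $H(x)$ in dimension two, namely $H(x)=\ln|x|$ from \eqref{A(x)}. Since the statement only restricts $x$ to the annular region $R<|x|<2R$ (with $R$ large), monotonicity of $\ln$ and of $1/|x|$ will do essentially all of the work; no cutoff function or integration by parts is needed at this stage.

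First, for the pointwise bound on $H$, I would simply observe that on $R<|x|<2R$ one has $H(x)=\ln|x|\le\ln(2R)=\ln 2+\ln R$. For $R$ sufficiently large (which we are free to assume, as the test function construction \eqref{phi1} is used only for large $R$), the constant term $\ln 2$ is dominated by $\ln R$, yielding $H(x)\le C\ln R$ for a suitable absolute constant $C$.

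Next, for the gradient bound, a direct differentiation of $H(x)=\ln|x|$ gives
\begin{equation*}
\nabla H(x)=\frac{x}{|x|^{2}},\qquad |\nabla H(x)|=\frac{1}{|x|}.
\end{equation*}
On the annulus $R<|x|<2R$, this immediately implies $|\nabla H(x)|<R^{-1}$, which is the desired inequality with $C=1$.

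There is no real obstacle here: both inequalities reduce to elementary scaling in an annulus, and the only mild point is the need to absorb the additive $\ln 2$ into $C\ln R$, which is legitimate once $R$ is taken large enough (consistent with the standing assumption on $R$ in the definition of $\psi$). The lemma is essentially a bookkeeping estimate that will later be combined with the derivative bounds on $\xi$ when evaluating $\Delta^{2}\varphi_{1}$ and $\partial_{t}\varphi_{1}$ on the support annulus.
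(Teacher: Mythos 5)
Your proposal is correct and follows the same route as the paper: bound $\ln|x|\le\ln(2R)=\ln R+\ln 2\le C\ln R$ for large $R$, then compute $\nabla H(x)=x/|x|^{2}$ so that $|\nabla H(x)|=1/|x|<R^{-1}$ on the annulus. Nothing is missing.
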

\begin{proof} The harmonic function \eqref{A(x)} can be estimated as follows:
\begin{align*}
H(x)&= \ln |x|\leq  \ln 2R=  \ln{R} +\ln 2 \\&\leq  \ln R+\ln R\leq C\ln R,\end{align*}
from direct computation, we get
$$ \nabla H(x)=\frac{x}{|x|^2},\quad |\nabla H(x)|\leq CR^{-1}.$$
which verifies the proof.
\end{proof}

\begin{Lem}\label{estimate.t.n=2} 
    For $R>0$ and $T>0$, the subsequent estimate is valid:
    \begin{equation*}
         \int_Q   \varphi_1^{-\frac{1}{p-1}}(x,t)\left| \partial_t\varphi_1(x,t)\right|^{p^{\prime}} \mathrm{d}x \mathrm{d}t\leq C T^{1-p^{\prime}}R^{2} \ln R.
    \end{equation*}
   
\end{Lem}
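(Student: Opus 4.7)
The plan is to exploit the tensor-product structure of $\varphi_1(t,x)=\eta(t)\psi(x)$ to split the integral into a time factor times a space factor, and then estimate each by elementary means. The crucial algebraic coincidence I would lean on is $p'-\tfrac{1}{p-1}=1$, which collapses the dependence of the spatial factor on $\psi$ from a power to the first power.

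First I would compute $\partial_t\varphi_1(t,x)=\eta'(t)\psi(x)$ and rewrite the integrand as
\begin{equation*}
\varphi_1^{-\frac{1}{p-1}}|\partial_t\varphi_1|^{p'}=\eta^{-\frac{1}{p-1}}(t)\,|\eta'(t)|^{p'}\cdot\psi^{\,p'-\frac{1}{p-1}}(x)=\eta^{-\frac{1}{p-1}}(t)\,|\eta'(t)|^{p'}\cdot\psi(x),
\end{equation*}
so that Fubini separates the integral into $\bigl(\int_0^T\eta^{-1/(p-1)}|\eta'|^{p'}\,dt\bigr)\bigl(\int_{D^c}\psi\,dx\bigr)$.

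Next I would handle the time factor. From $\eta(t)=\zeta^{\ell}(t/T)$ one gets $\eta'(t)=\frac{\ell}{T}\zeta^{\ell-1}(t/T)\zeta'(t/T)$, and combining with the negative power of $\eta$ yields
\begin{equation*}
\eta^{-\frac{1}{p-1}}(t)\,|\eta'(t)|^{p'}=\ell^{p'}\,T^{-p'}\,\zeta^{\ell-p'}(t/T)\,|\zeta'(t/T)|^{p'},
\end{equation*}
using the identity $(\ell-1)p'-\tfrac{\ell}{p-1}=\ell-p'$. Since $\ell\geq\tfrac{4p}{p-1}=4p'$, the exponent $\ell-p'$ is positive, so after the change of variables $s=t/T$ the integral reduces to $T^{1-p'}$ times a constant depending only on $\zeta$; in particular, the time integral is bounded by $CT^{1-p'}$.

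Then I would bound the spatial factor. The support of $\xi^{\ell}(|x|/R)$ lies in $\{|x|\leq 2R\}$, and on this entire annular region the estimate $H(x)=\ln|x|\leq\ln(2R)\leq C\ln R$ from Lemma~\ref{A(x.estimate).N2} applies. Since $N=2$, the measure of $\{1\leq|x|\leq 2R\}$ is $O(R^2)$, so
\begin{equation*}
\int_{D^c}\psi(x)\,dx\leq C\ln R\cdot|B(0,2R)|\leq CR^2\ln R.
\end{equation*}
Multiplying the two bounds yields the claimed estimate $CT^{1-p'}R^2\ln R$. There is no serious obstacle; the only subtle point is the exponent bookkeeping, which shows that the prescribed lower bound $\ell\geq\tfrac{4p}{p-1}$ is exactly what forces all cutoff powers to be nonnegative and makes the $T^{1-p'}$ scaling drop out cleanly.
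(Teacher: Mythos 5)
Your proposal is correct and follows essentially the same route as the paper: split $\varphi_1^{-1/(p-1)}|\partial_t\varphi_1|^{p'}$ into $\psi(x)\cdot\eta^{-1/(p-1)}|\eta'|^{p'}$ via the identity $p'-\tfrac{1}{p-1}=1$, bound the time factor by $CT^{1-p'}$ after rescaling $s=t/T$, and bound the spatial factor by $CR^2\ln R$ using $H(x)\leq C\ln R$ and the area of the region $1<|x|<2R$ in dimension two. Your exponent bookkeeping $(\ell-1)p'-\tfrac{\ell}{p-1}=\ell-p'$ is in fact slightly more explicit than the paper's computation, which absorbs $|\zeta'|^{p'}$ into the constant without comment.
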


\begin{proof}

Employing the attributes of the test functions specified in equation \eqref{phi}, we can derive
\begin{equation*}
      \varphi_1^{-\frac{1}{p-1}}(x,t)\left| \partial_t \varphi_1(x,t)\right|^{p^{\prime}}=  \psi(x)\eta^{-\frac{1}{p-1}}(t)\big|\partial_t\eta(t)\big|^{p^{\prime}},
\end{equation*}
which reformulates the following integral
    $$
\begin{aligned}
 \int_Q  \varphi_1^{-\frac{1}{p-1}}(x,t)\left|\partial_t \varphi_1(x,t)\right|^{p^{\prime}}\mathrm{d}x \mathrm{d}t =\left(\int_{D^c}\psi(x)\mathrm{d}x \right)\left(\int_0^T \eta^{-\frac{1}{p-1}}(t)\big|\partial_t\eta(t)\big|^{p^{\prime}} \mathrm{d}t\right).
\end{aligned}
$$
Further, using the \eqref{phi1} and lemma \ref{A(x.estimate).N2}, we have
 
\begin{equation}\label{N2.xi.estimate}
    \begin{aligned}
        \int_{D^c}\psi(x)\mathrm{d}x&=\int_{1<|x|<2R} H(x) \xi^l\left(\frac{|x|}{R}\right)\mathrm{d}x\\
        & \leq C \ln R\int_{1<|x|<2R} \xi^l\left(\frac{|x|}{R}\right)\mathrm{d}x\\
         & \stackrel{|x|=r}{\leq}  C \ln R \int_{1}^{2R} r \mathrm{d}r\\
         & \leq CR^{2} \ln R.
    \end{aligned}
\end{equation}
On the other hand, the cut-off function \eqref{zeta} implies that 

\begin{equation}\label{N2.derivetavizeta.estimate}
\begin{aligned}
\int_0^T \eta^{-\frac{1}{p-1}}(t) \left| \partial_t\eta(t)\right|^{p^{\prime}} \mathrm{d}t& =T^{-p^{\prime}} \int_{\frac{T}{2}}^T \zeta^{-\frac{l}{p-1}}\left(\frac{t}{T}\right) \zeta^{(l-1) p'}\left(\frac{t}{T}\right)\mathrm{d}t
 \\
& =CT^{-p^{\prime}} \int^T_{\frac{T}{2}} \zeta^{l-p^{\prime}}\left(\frac{t}{T}\right)\mathrm{d}t\\
&=C T^{1-p^{\prime}} \int_{\frac{1}{2}}^1 \zeta^{l-p^{\prime}}(s) \mathrm{d}s\\
& \leq C T^{1-p^{\prime}}.
\end{aligned}
\end{equation}

\noindent Therefore, from \eqref{N2.xi.estimate} and \eqref{N2.derivetavizeta.estimate} we get the desired results.
 \end{proof}

The subsequent estimate arises from the standard calculation. The function $\xi$ is radial, which means the estimate can be directly calculated by changing to the polar coordinates. The power of $R$ provides a relation of the derivatives to the cut-off function. 

\begin{Lem}\label{samet.lemma}
    For $R<|x|<2 R$, the following estimates hold:
$$\left|\nabla\left(\Delta \xi^{\ell}\left(\frac{|x|}{R}\right)\right)\right|  \leq C R^{-3} \xi^{\ell-4}\left(\frac{|x|}{R}\right),$$ and
$$\left|\Delta^2 \xi^{\ell}\left(\frac{|x|}{R}\right)\right|  \leq C R^{-4} \xi^{\ell-4}\left(\frac{|x|}{R}\right).
$$
\end{Lem}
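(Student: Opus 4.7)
The plan is to exploit the radial symmetry of $\xi^{\ell}(|x|/R)$ to reduce both estimates to one-dimensional bounds, and then to use $r\asymp R$ on the annulus together with $0\le\xi\le 1$ to absorb all intermediate powers into a single worst case. Set $\Phi(x):=\xi^{\ell}(|x|/R)=g(|x|)$ with $g(r):=\xi^{\ell}(r/R)$. By iterated chain and generalized Leibniz rules, each derivative $g^{(k)}(r)$ is a finite linear combination of terms of the form
$$R^{-k}\,\xi^{\ell-j}(r/R)\prod_{i=1}^{j}\xi^{(m_i)}(r/R),\qquad 1\le j\le k,\quad \sum_{i} m_i=k.$$
Using the assumed uniform bounds $|\xi^{(m)}|\le C$ for $m=1,\ldots,4$ together with $0\le\xi\le 1$, each such term is controlled on the support $\{R\le r\le 2R\}$ by $|g^{(k)}(r)|\le C R^{-k}\,\xi^{\ell-k}(r/R)$.

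Next, since $\Phi$ is radial, I would invoke the standard polar expressions obtained by applying $\Delta$ once or twice to $\Delta\Phi=g''+\tfrac{N-1}{r}g'$:
$$\nabla\Delta\Phi(x)=\Bigl(g'''(r)+\tfrac{N-1}{r}g''(r)-\tfrac{N-1}{r^{2}}g'(r)\Bigr)\tfrac{x}{|x|},$$
$$\Delta^{2}\Phi(x)=g^{(4)}(r)+\tfrac{2(N-1)}{r}g'''(r)+\tfrac{(N-1)(N-3)}{r^{2}}g''(r)-\tfrac{(N-1)(N-3)}{r^{3}}g'(r).$$
The Euclidean norm of the gradient equals the absolute value of its scalar coefficient, so both quantities to be estimated are sums of terms of the form $r^{-a}g^{(b)}(r)$ with $a+b=3$ (for $\nabla\Delta\Phi$) or $a+b=4$ (for $\Delta^{2}\Phi$), and $0\le b\le 4$.

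On the annulus $r^{-a}\le C R^{-a}$, and combining with the bound from the first paragraph gives $|r^{-a}g^{(b)}(r)|\le C R^{-(a+b)}\xi^{\ell-b}(r/R)$. Since $0\le\xi\le 1$ and $b\le 4\le \ell$ (guaranteed by the standing hypothesis $\ell\ge 4p/(p-1)>4$ for $p>1$), we have $\xi^{\ell-b}\le \xi^{\ell-4}$, which collapses all terms into a uniform factor $\xi^{\ell-4}(|x|/R)$ and delivers the two claimed estimates $C R^{-3}\xi^{\ell-4}$ and $C R^{-4}\xi^{\ell-4}$. The only mildly delicate point is the book-keeping in the generalized Leibniz expansion of $g^{(k)}$ and verifying that the worst power $\xi^{\ell-4}$ indeed dominates every summand, but given $\ell\ge 4$ this is routine rather than a genuine obstacle.
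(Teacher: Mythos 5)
Your proof is correct and follows exactly the route the paper intends but leaves unproved: the paper merely asserts that the lemma "arises from the standard calculation" via radial symmetry and polar coordinates, and your argument supplies precisely that computation (the Faà di Bruno/Leibniz expansion of $g^{(k)}$, the polar formulas for $\nabla\Delta$ and $\Delta^2$, and the absorption of $r^{-a}\le CR^{-a}$ and $\xi^{\ell-b}\le\xi^{\ell-4}$ on the annulus). The bookkeeping, including the check that $\ell\ge 4p/(p-1)>4$ makes the exponent $\ell-4$ admissible, is accurate.
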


\noindent Furthermore, by \eqref{phi1}, we have
\begin{equation}\label{bxi}
\begin{aligned}
\Delta^2\psi(x)
= & \xi^{\ell}\left(\frac{|x|}{R}\right) \Delta^2 H(x)+4 \nabla(\Delta H(x)) \cdot \nabla \xi^{\ell}\left(\frac{|x|}{R}\right)\\&+6 \Delta H(x) \Delta \xi^{\ell}\left(\frac{|x|}{R}\right) +4 \nabla H(x) \cdot \nabla\left(\Delta \xi^{\ell}\left(\frac{|x|}{R}\right)\right)\\&+H(x) \Delta^2 \xi^{\ell}\left(\frac{|x|}{R}\right).
\end{aligned}
\end{equation}

\noindent Taking into account \eqref{A(x)}, we deduce 
$$
\begin{aligned}
\Delta^2\psi(x)= & H(x) \Delta^2 \xi^{\ell}\left(\frac{|x|}{R}\right)+4 \nabla H(x) \cdot \nabla\left(\Delta \xi^{\ell}\left(\frac{|x|}{R}\right)\right),
\end{aligned}
$$

\noindent and from the lemma \ref{A(x.estimate).N2} and lemma \ref{samet.lemma}
\begin{equation}\label{N2.biha.xi}
 \begin{aligned}
\left|\Delta^2\psi(x)\right| & \leq C\left(R^{-4} \ln R+ R^{-4}\right) \xi^{\ell-4}\left(\frac{|x|}{R}\right) \\
& \leq C R^{-4} \ln R \xi^{\ell-4}\left(\frac{|x|}{R}\right), \quad R<|x|<2 R .
\end{aligned}   
\end{equation}

\begin{Lem}\label{estimate.b.n=2}
   For sufficiently large $R, T$ the following integral can be evaluated as
    \begin{equation*}
        \int_Q   \varphi_1^{-\frac{1}{p-1}}(x,t)\left|\Delta^2 \varphi_1(x,t)\right|^{p'} \mathrm{d}x \mathrm{d}t \leq CT^1 R^{-\frac{2(p+1)}{p-1}}(\ln{R})^{\frac{p}{p-1}}.
    \end{equation*}   
\end{Lem}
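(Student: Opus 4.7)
The plan is to exploit the product structure $\varphi_1(t,x)=\eta(t)\psi(x)$ so that the integral factorizes into a $t$-part and an $x$-part, and then to combine the estimate \eqref{N2.biha.xi} on $|\Delta^2\psi|$ with the fact that $\Delta^2\psi$ is supported in the annulus $R<|x|<2R$ (because $\Delta^2 H\equiv 0$ on $D^c$).

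First I would observe that since $\Delta^2\varphi_1=\eta(t)\Delta^2\psi(x)$ and $\varphi_1^{-1/(p-1)}=\eta^{-1/(p-1)}\psi^{-1/(p-1)}$, the algebraic identity $-\tfrac{1}{p-1}+p'=1$ collapses the time factor to $\eta(t)$. Thus the integral splits as
\begin{equation*}
\int_Q \varphi_1^{-\frac{1}{p-1}}|\Delta^2\varphi_1|^{p'}\,dx\,dt
=\Bigl(\int_0^T\eta(t)\,dt\Bigr)\Bigl(\int_{D^c}\psi^{-\frac{1}{p-1}}|\Delta^2\psi|^{p'}\,dx\Bigr),
\end{equation*}
and since $0\leq\eta\leq 1$, the time factor is bounded by $T$.

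Next I would analyze the space factor. Since $H$ is harmonic, expansion \eqref{bxi} simplifies so that $\Delta^2\psi$ is supported only where some derivative of $\xi^\ell(|x|/R)$ is nonzero, i.e. in $R<|x|<2R$, and on this set \eqref{N2.biha.xi} gives $|\Delta^2\psi(x)|\leq CR^{-4}\ln R\,\xi^{\ell-4}(|x|/R)$. Substituting and using $\psi=H\xi^\ell$, one gets
\begin{equation*}
\int_{D^c}\psi^{-\frac{1}{p-1}}|\Delta^2\psi|^{p'}\,dx
\leq CR^{-4p'}(\ln R)^{p'}\int_{R<|x|<2R}H(x)^{-\frac{1}{p-1}}\xi^{(\ell-4)p'-\frac{\ell}{p-1}}\!\!\left(\tfrac{|x|}{R}\right)dx.
\end{equation*}
The condition $\ell\geq \tfrac{4p}{p-1}$ ensures $(\ell-4)p'-\tfrac{\ell}{p-1}\geq 0$, so the $\xi$-factor is bounded by $1$. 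Moreover, for $R$ sufficiently large we have $H(x)=\ln|x|\geq \ln R\geq 1$ on the annulus, so $H(x)^{-1/(p-1)}\leq 1$. The remaining integral is just the volume of an annulus in $\mathbb{R}^2$, which is $\leq CR^2$.

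Finally I would combine everything and simplify exponents: $R^{-4p'+2}=R^{-\frac{2(p+1)}{p-1}}$ and $p'=\tfrac{p}{p-1}$, yielding
\begin{equation*}
\int_Q \varphi_1^{-\frac{1}{p-1}}|\Delta^2\varphi_1|^{p'}\,dx\,dt\leq CTR^{-\frac{2(p+1)}{p-1}}(\ln R)^{\frac{p}{p-1}},
\end{equation*}
as required. The only subtle point — and the one place where I would be careful — is the handling of the weight $H^{-1/(p-1)}$: one must not try to use the sharper bound $H(x)\sim \ln R$ (which would produce an extra $(\ln R)^{-1/(p-1)}$ and change the final exponent), but rather the cruder but simpler estimate $H^{-1/(p-1)}\leq 1$ that gives precisely the stated power of $\ln R$.
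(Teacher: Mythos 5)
Your proposal is correct and follows essentially the same route as the paper: factor the integral via $\varphi_1=\eta\psi$, bound the time factor by $CT$, apply \eqref{N2.biha.xi} on the annulus $R<|x|<2R$, absorb the $\xi$-powers using $\ell\geq\frac{4p}{p-1}$, bound $H^{-1/(p-1)}$ by a constant, and integrate over the annulus in $\mathbb{R}^2$. Your closing caveat is slightly misplaced, though: using the sharper lower bound $H\geq\ln R$ would only yield a \emph{smaller} logarithmic power and hence a stronger estimate, so it would not invalidate the stated inequality.
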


\begin{proof}  Using \eqref{phi}, we deduce 

\begin{equation*}
    \varphi_1^{-\frac{1}{p-1}}(x,t)\left|\Delta^2 \varphi_1(x,t)\right|^{p'}=  \eta(t)\psi(x)^{-\frac{1}{p-1}}(x)\left|\Delta^2\psi(x)\right|^{p^{\prime}},
\end{equation*}
which implies that
     $$
\begin{aligned}
\int_Q \varphi_1^{-\frac{1}{p-1}}(x,t)\left|\Delta^2 \varphi_1(x,t)\right|^{p'} \mathrm{d}x \mathrm{d}t  =\left(\int_0^T\eta(t) \mathrm{d}t \right)\left(\int_{D^c}\psi(x)^{-\frac{1}{p-1}}(x)\left|\Delta^2\psi(x)\right|^{p^{\prime}}\mathrm{d}x \right).
\end{aligned}
$$
    By the definition of the cut-off function \eqref{zeta}, we obtain
\begin{equation}\label{zeta.estimate}
\begin{aligned}
\int_0^T \eta(t) \mathrm{d}t  =\int_0^T \zeta^{\ell}\left(\frac{t}{T}\right) \mathrm{d}t =CT \int_0^1 \zeta^{\ell}(s) \mathrm{d}s= CT.
\end{aligned}
\end{equation}

\noindent Moreover,  lemma \ref{A(x.estimate).N2} and inequality \eqref{N2.biha.xi} imply that

\begin{equation}\label{le2}
\begin{aligned}
 \int_{D^c}\psi(x)^{-\frac{1}{p-1}}(x)\left|\Delta^2\psi(x)\right|^{p^{\prime}}\mathrm{d}x=&\int_{R<|x|<2 R} \left[H(x) \xi^l\left(\frac{|x|}{R}\right)\right]^{-\frac{1}{p-1}} \left|\Delta^2\psi(x)\right|^{p^{\prime}}\mathrm{d}x \\
\leq & C R^{-4 p^{\prime}} (\ln{R})^{\frac{p}{p-1}} \int_{R<|x|<2 R} \xi^{l-4 p^{\prime}}\left(\frac{|x|}{R}\right) \mathrm{d}x\\
\leq&         C R^{-\frac{2(p+1)}{p-1}} (\ln{R})^{\frac{p}{p-1}}.
\end{aligned}
\end{equation}

\noindent To obtain the expected outcome, it is necessary to merge \eqref{zeta.estimate} and \eqref{le2}.
\end{proof}

\subsection{The case when $\boldsymbol{N\geq3}$}

\begin{Lem}\label{A(x.estimate).N=3}
     For $R<|x|<2 R$, we have
$$ H(x) \leq C,$$ and $$|\nabla H(x)| \leq C R^{1-N}.
$$
\end{Lem}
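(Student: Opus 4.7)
\textbf{Proof plan for Lemma \ref{A(x.estimate).N=3}.} The statement is essentially a direct computation from the explicit formula \eqref{A(x)}, so my plan is simply to substitute and estimate. For $N \geq 3$ the harmonic profile is $H(x) = 1 - |x|^{2-N}$. Since $N - 2 > 0$, the term $|x|^{2-N}$ is positive on $D^c$, so $H(x) \leq 1$ uniformly in $x \in D^c$, which already gives the first bound $H(x) \leq C$ on the annulus $R < |x| < 2R$ (in fact on all of $D^c$, with $C = 1$).

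For the gradient bound, I would compute $\nabla H(x)$ directly. Using $\nabla |x|^\alpha = \alpha |x|^{\alpha - 2} x$ with $\alpha = 2 - N$, we obtain
\begin{equation*}
\nabla H(x) = -(2 - N)|x|^{-N} x = (N-2)|x|^{-N} x,
\end{equation*}
so that
\begin{equation*}
|\nabla H(x)| = (N-2)|x|^{1-N}.
\end{equation*}
Since $1 - N < 0$ and $|x| > R$ on the annulus, the map $r \mapsto r^{1-N}$ is decreasing and hence $|x|^{1-N} \leq R^{1-N}$. Combining these two observations yields $|\nabla H(x)| \leq (N-2) R^{1-N} \leq C R^{1-N}$, where $C$ depends only on the dimension $N$.

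There is no real obstacle here: the lemma is the $N \geq 3$ analogue of Lemma \ref{A(x.estimate).N2}, and unlike the logarithmic case $N=2$, no extra $\ln R$ factor appears because $|x|^{2-N}$ is bounded as $|x| \to \infty$. The only point worth being careful about is noting that $N \geq 3$ is used twice — once to guarantee $|x|^{2-N} \geq 0$ so that $H$ is bounded above, and once to guarantee $|x|^{1-N}$ is monotone decreasing so that the supremum on the annulus is attained at the inner boundary $|x| = R$.
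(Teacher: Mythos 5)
Your proof is correct and follows essentially the same route as the paper: a direct computation from the explicit formula $H(x)=1-|x|^{2-N}$, giving $H\leq 1$ and $\nabla H(x)=(N-2)|x|^{-N}x$, hence $|\nabla H(x)|=(N-2)|x|^{1-N}\leq CR^{1-N}$ on the annulus. Your remark on where $N\geq 3$ enters is a minor clarification beyond the paper's one-line argument, but the substance is identical.
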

\begin{proof}
From the direct calculation of \eqref{A(x)}, we deduce
    $$
H(x)=\left(1-|x|^{2-N}\right)< \left(1-(2R)^{2-N}\right)\leq C,
$$
also for the second inequality
$$  \nabla H(x)=(N-2)\frac{x}{|x|^{N}}, \quad  |\nabla H(x)|\leq CR^{1-N}.$$
This finalizes our argument.
\end{proof}

\begin{Lem}\label{estimate.t} 
    For $R>0$ and $T>0$ the following estimate holds:
    \begin{equation*}\label{le23}
         \int_Q \varphi_1(x,t)^{-\frac{1}{p-1}}\left| \partial_t \varphi_1(x,t)\right|^{p^{\prime}} \mathrm{d}x \mathrm{d}t\leq C T^{1-p^{\prime}}R^{N} .
    \end{equation*}
   
\end{Lem}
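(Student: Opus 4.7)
The plan is to follow the same template as the proof of Lemma \ref{estimate.t.n=2}, exploiting the product structure $\varphi_1(t,x) = \eta(t)\psi(x)$, but now using the fact that for $N \geq 3$ the harmonic function $H(x)$ is bounded rather than growing logarithmically.

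First I would observe that, since $\varphi_1(x,t) = \eta(t)\psi(x)$ and only the $\eta$-factor depends on time,
\begin{equation*}
\varphi_1^{-\frac{1}{p-1}}(x,t)\,|\partial_t \varphi_1(x,t)|^{p'} = \psi(x)\,\eta^{-\frac{1}{p-1}}(t)\,|\partial_t \eta(t)|^{p'},
\end{equation*}
so by Fubini the integral over $Q$ factors into a spatial part $\int_{D^c}\psi(x)\,dx$ and a temporal part $\int_0^T \eta^{-\frac{1}{p-1}}(t)|\partial_t\eta(t)|^{p'}\,dt$.

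For the spatial part, I would use \eqref{phi1} together with Lemma \ref{A(x.estimate).N=3}, which gives $H(x) \leq C$ on $R < |x| < 2R$ (and $H(x) \leq C$ on $1 < |x| \leq R$ as well, since $H$ is monotone and bounded by its limit). Passing to polar coordinates and using $\xi \leq 1$,
\begin{equation*}
\int_{D^c}\psi(x)\,dx = \int_{1<|x|<2R} H(x)\,\xi^{\ell}\!\left(\tfrac{|x|}{R}\right)dx \leq C\int_1^{2R} r^{N-1}\,dr \leq C R^N.
\end{equation*}
For the temporal part, the computation is identical to \eqref{N2.derivetavizeta.estimate}: using $\eta(t) = \zeta^{\ell}(t/T)$ with $\ell \geq \frac{4p}{p-1} \geq p'$, the chain rule yields $|\partial_t \eta(t)| \leq C T^{-1}\zeta^{\ell-1}(t/T)$ on $T/2 < t < T$, so $\eta^{-\frac{1}{p-1}}|\partial_t\eta|^{p'} \leq C T^{-p'}\zeta^{\ell - p'}(t/T)$, and a substitution $s = t/T$ gives
\begin{equation*}
\int_0^T \eta^{-\frac{1}{p-1}}(t)\,|\partial_t\eta(t)|^{p'}\,dt \leq C T^{1-p'}\int_{1/2}^1 \zeta^{\ell-p'}(s)\,ds \leq C T^{1-p'}.
\end{equation*}

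Multiplying the two bounds delivers the claimed estimate $C T^{1-p'} R^N$. There is no real obstacle here: the only thing that changes compared with the $N=2$ case is replacing the $\ln R$ bound on $H$ by the uniform bound of Lemma \ref{A(x.estimate).N=3}, which eliminates the logarithm and turns the $R^2$ volume factor into $R^N$; the time integral is unchanged. The estimate on $|\partial_t \eta|$ is the only place where the lower bound $\ell \geq \frac{4p}{p-1}$ is implicitly used to keep the exponent $\ell - p'$ nonnegative.
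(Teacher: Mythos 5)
Your proposal is correct and follows essentially the same route as the paper: factor the integral via $\varphi_1=\eta\psi$, bound the spatial factor by $CR^N$ using $H(x)\leq C$ from Lemma \ref{A(x.estimate).N=3} and polar coordinates, and reuse the temporal estimate \eqref{N2.derivetavizeta.estimate}. Your explicit remark that the bound $H\leq C$ extends to all of $1<|x|<2R$ (not just the annulus $R<|x|<2R$ where the lemma is stated) is a small but worthwhile point of care that the paper glosses over.
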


\begin{proof}
By leveraging the properties of the test functions as delineated in equations \eqref{phi} through \eqref{phi1}, we arrive at 
    $$
\begin{aligned}
\int_Q  \varphi_1^{-\frac{1}{p-1}}(x,t)\left| \partial_t \varphi_1(x,t)\right|^{p^{\prime}}\mathrm{d}x \mathrm{d}t =\left(\int_{D^c}\psi(x)\mathrm{d}x \right)\left(\int_0^T \eta^{-\frac{1}{p-1}}(t)\big|\partial_t\eta(t)\big|^{p^{\prime}} \mathrm{d}t\right).
\end{aligned}
$$

\noindent Taking into account definition of test functions \eqref{phi1} and the lemma \ref{A(x.estimate).N=3}, we derive

\begin{equation}\label{N3.xi.estimate}
    \begin{aligned}
        \int_{D^c}\psi(x)\mathrm{d}x&=\int_{1<|x|<2R} H(x) \xi^l\left(\frac{|x|}{R}\right)\mathrm{d}x\\
        & \leq C \int_{1<|x|<2R} \xi^l\left(\frac{|x|}{R}\right)\mathrm{d}x\\
         & \stackrel{|x|=r}{\leq}  C  \int_{1}^{2R} r^{N-1} \mathrm{d}r\\
         & \leq CR^{N} .
    \end{aligned}
\end{equation}
\noindent The integral involving a cut-off function evaluates as \eqref{N2.derivetavizeta.estimate} and combining with \eqref{N3.xi.estimate}, we have the desired result. This finalizes the proof.
\end{proof}

\noindent The following computation is straightforward from \eqref{bxi}
$$
\begin{aligned}
\Delta^2\psi(x)= & H(x) \Delta^2 \xi^{\ell}\left(\frac{|x|}{R}\right)+4 \nabla H(x) \cdot \nabla\left(\Delta \xi^{\ell}\left(\frac{|x|}{R}\right)\right),
\end{aligned}
$$

\noindent and from  lemma \ref{samet.lemma} and lemma \ref{A(x.estimate).N=3}, we derive
\begin{equation}\label{es.n3}
   \begin{aligned}
\left|\Delta^2\psi(x)\right| & \leq C\left(R^{-4} +R^{1-N} R^{-3}\right) \xi^{\ell-4}\left(\frac{|x|}{R}\right) \\
& \leq C R^{-4}  \xi^{\ell-4}\left(\frac{|x|}{R}\right), \quad R<|x|<2 R .
\end{aligned} 
\end{equation}

\begin{Lem}\label{estimate.b.n=3}
    The following integral can be evaluated as
    \begin{equation*}
        \int_Q  \varphi_1^{-\frac{1}{p-1}}(x,t)\left|\Delta^2 \varphi_1(x,t)\right|^{p'} \mathrm{d}x \mathrm{d}t \leq CT^1 R^{N-4p'}.
    \end{equation*}   
\end{Lem}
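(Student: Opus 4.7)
The plan is to mimic the proof of Lemma~\ref{estimate.b.n=2}, replacing the $\ln R$ growth of $H(x)$ by the uniform bound of Lemma~\ref{A(x.estimate).N=3}. The starting point is the product structure $\varphi_1(x,t)=\eta(t)\psi(x)$, together with the elementary identity $p'-\tfrac{1}{p-1}=1$. This gives
$$
\varphi_1^{-\frac{1}{p-1}}(x,t)\,|\Delta^2\varphi_1(x,t)|^{p'}=\eta(t)\,\psi(x)^{-\frac{1}{p-1}}|\Delta^2\psi(x)|^{p'},
$$
so the integral separates as
$$
\int_Q \varphi_1^{-\frac{1}{p-1}}|\Delta^2\varphi_1|^{p'}\mathrm{d}x\mathrm{d}t=\Bigl(\int_0^T\eta(t)\mathrm{d}t\Bigr)\Bigl(\int_{D^c}\psi^{-\frac{1}{p-1}}(x)|\Delta^2\psi(x)|^{p'}\mathrm{d}x\Bigr).
$$

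For the time factor I would invoke the already established estimate \eqref{zeta.estimate}, which gives $\int_0^T\eta(t)\mathrm{d}t\leq CT$. For the spatial factor, note that $\Delta^2\psi(x)$ is supported in the annulus $R<|x|<2R$, where the pointwise bound \eqref{es.n3} applies: $|\Delta^2\psi(x)|\leq CR^{-4}\xi^{\ell-4}(|x|/R)$. Combined with the definition $\psi(x)=H(x)\xi^{\ell}(|x|/R)$ this yields
$$
\psi^{-\frac{1}{p-1}}(x)|\Delta^2\psi(x)|^{p'}\leq CR^{-4p'}H(x)^{-\frac{1}{p-1}}\xi^{\ell-4p'}\!\left(\tfrac{|x|}{R}\right),
$$
upon using the exponent arithmetic $(\ell-4)p'-\tfrac{\ell}{p-1}=\ell-4p'\geq0$, which is exactly why $\ell\geq\tfrac{4p}{p-1}$ was imposed in the definition of the test function.

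The only remaining subtlety is to control $H(x)^{-1/(p-1)}$ on the annulus. Here, unlike the $N=2$ case, Lemma~\ref{A(x.estimate).N=3} together with the explicit formula $H(x)=1-|x|^{2-N}$ gives a positive lower bound $H(x)\geq 1-R^{2-N}\geq \tfrac12$ for $R<|x|<2R$ and $R$ sufficiently large, so $H^{-1/(p-1)}\leq C$. Passing to polar coordinates on the annulus then gives $\int_{R<|x|<2R}\xi^{\ell-4p'}(|x|/R)\mathrm{d}x\leq CR^N$, so the spatial integral is bounded by $CR^{N-4p'}$. Multiplying by the time factor $CT$ yields the claimed estimate $CT R^{N-4p'}$. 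There is no real obstacle here: every ingredient has been prepared in advance, and the computation is essentially bookkeeping once one observes that the logarithmic factor of the two-dimensional case is absent.
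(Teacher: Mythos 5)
Your proposal is correct and follows essentially the same route as the paper: separate variables, bound the time factor by $CT$ via \eqref{zeta.estimate}, apply the pointwise bound \eqref{es.n3} on the annulus $R<|x|<2R$, and integrate in polar coordinates to get $CR^{N-4p'}$. Your explicit lower bound $H(x)\geq \tfrac12$ on the annulus is a small but welcome addition, since the paper invokes only the upper bound of Lemma~\ref{A(x.estimate).N=3} while the term $\psi^{-1/(p-1)}$ actually requires $H$ to be bounded away from zero.
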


\begin{proof}  By \eqref{phi}, we  obtain following 
    $$
\begin{aligned}
\int_Q  \varphi_1^{-\frac{1}{p-1}}(x,t)\left|\Delta^2 \varphi_1(x,t)\right|^{p'}\mathrm{d}x \mathrm{d}t =\left(\int_0^T\eta(t) \mathrm{d}t \right)\left(\int_{D^c}\psi^{-\frac{1}{p-1}}(x)\left|\Delta^2\psi(x)\right|^{p^{\prime}}\mathrm{d}x \right).
\end{aligned}
$$

\noindent From \eqref{zeta.estimate}, we have
   $$
\begin{aligned}
\int_Q   \varphi_1^{-\frac{1}{p-1}}(x,t)\left|\Delta^2 \varphi_1(x,t)\right|^{p'}\mathrm{d}x \mathrm{d}t = CT\int_{D^c}\psi^{-\frac{1}{p-1}}(x)\left|\Delta^2\psi(x)\right|^{p^{\prime}}\mathrm{d}x,
\end{aligned}
$$

\noindent By using the lemma \eqref{A(x.estimate).N=3} and estimate \eqref{es.n3}, we obtain following

\begin{equation*}
\begin{aligned}
 \int_Q   \varphi_1^{-\frac{1}{p-1}}(x,t)\left|\Delta^2 \varphi_1(x,t)\right|^{p'}\mathrm{d}x \mathrm{d}t &\leq  CT R^{-4 p^{\prime}}  \int_{R<|x|<2 R} \xi^{l-4 p^{\prime}}\left(\frac{|x|}{R}\right) \mathrm{d}x\\
&\stackrel{|x|=r}{\leq} CT R^{-4 p^{\prime}}  \int_{R<r<2R}r^{N-1}\mathrm{d}r\\
&\leq         C TR^{N-4 p^{\prime}} ,
\end{aligned}
\end{equation*}

\noindent which completes the proof.
\end{proof}

\subsection{Test functions with logarithmic arguments}

We introduce the functions of the form
\begin{equation}\label{phic}
   \varphi_1(t, x)=\eta(t) \psi(x)=\zeta^{\ell}\left(\frac{t}{T}\right) H(x) \mathcal{F}^{\ell}\left(\frac{\ln \left( \frac{|x|}{\sqrt{R}}\right)}{\ln (\sqrt{R})}\right), \quad(t, x) \in Q,
\end{equation}

\noindent where $H(x)$ is harmonic function \eqref{A(x)} , $\eta(t)$ is cut-off function as \eqref{zeta} and  $\mathcal{F}: \mathbb{R} \rightarrow[0,1]$ is a smooth function
\begin{equation}\label{f.s}
\mathcal{F}(s)= \begin{cases}1, & \text { if }-1 \leq s \leq 0, \\ \searrow, & \text { if } 0<s<1, \\ 0, & \text { if } s \geq 1.\end{cases}    
\end{equation}

\begin{Lem}
    For $N\geq 5$ and $p=\frac{N}{N-4}$ the following estimate holds:
    \begin{equation}\label{critpart}
         \int_Q  \varphi_1^{-\frac{1}{p-1}}(x,t)\left|\partial_t \varphi_1(x,t)\right|^{p^{\prime}}\mathrm{d}x \mathrm{d}t \leq C T^{1-\frac{N}{4}}R^{N}.
    \end{equation}
   
\end{Lem}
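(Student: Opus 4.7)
The plan is to exploit the product structure $\varphi_1(t,x)=\eta(t)\psi(x)$ with $\eta(t)=\zeta^{\ell}(t/T)$ and $\psi(x)=H(x)\mathcal{F}^{\ell}\bigl(\ln(|x|/\sqrt{R})/\ln(\sqrt{R})\bigr)$, which makes the left-hand side separate as
\[
\int_Q \varphi_1^{-\frac{1}{p-1}}|\partial_t\varphi_1|^{p'}\,\mathrm{d}x\,\mathrm{d}t=\Bigl(\int_{D^c}\psi(x)\,\mathrm{d}x\Bigr)\Bigl(\int_0^T \eta^{-\frac{1}{p-1}}(t)|\partial_t\eta(t)|^{p'}\,\mathrm{d}t\Bigr).
\]
Since the time-dependent factor is the same cut-off $\zeta^{\ell}(t/T)$ used everywhere else, the computation from \eqref{N2.derivetavizeta.estimate} applies verbatim and yields $\int_0^T\eta^{-1/(p-1)}|\partial_t\eta|^{p'}\,\mathrm{d}t\leq C T^{1-p'}$.

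Next I would insert the critical value $p=\frac{N}{N-4}$ into the conjugate exponent: a direct computation gives $p'=\frac{p}{p-1}=\frac{N}{4}$, so $T^{1-p'}=T^{1-N/4}$, which already supplies the temporal factor in the target bound. This is where the specific choice of the critical exponent enters the estimate.

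For the spatial integral I would restrict attention to the support of $\mathcal{F}^{\ell}\bigl(\ln(|x|/\sqrt{R})/\ln(\sqrt{R})\bigr)$. Tracing the argument of $\mathcal{F}$ through \eqref{f.s}, the function is zero once $\ln(|x|/\sqrt{R})\geq\ln(\sqrt{R})$, i.e. whenever $|x|\geq R$, so the support in $x$ is contained in the annulus $\{1\leq |x|\leq R\}$. Since $N\geq 5\geq 3$, Lemma \ref{A(x.estimate).N=3} (or simply the definition \eqref{A(x)}) gives $H(x)=1-|x|^{2-N}\leq 1$ throughout $D^c$. Bounding $\mathcal{F}^{\ell}\leq 1$ and passing to polar coordinates yields
\[
\int_{D^c}\psi(x)\,\mathrm{d}x\leq \int_{1\leq|x|\leq R}\,\mathrm{d}x \leq C\int_1^R r^{N-1}\,\mathrm{d}r\leq C R^N.
\]

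Multiplying the two one-variable estimates gives the announced bound $C T^{1-N/4}R^N$. I do not foresee a real obstacle here: the logarithmic cut-off $\mathcal{F}$ is designed to matter for the biharmonic term (where $|\Delta^2\psi|$ will pick up the crucial $(\ln R)^{-1}$-type saving in the critical case), whereas for $|\partial_t\varphi_1|^{p'}$ the cut-off only controls the support and loses nothing by the crude estimate $\mathcal{F}^{\ell}\leq 1$. The only step requiring any care is the algebraic identity $p'=N/4$ at $p=N/(N-4)$, which is precisely what produces the exponent $1-N/4$ in $T$.
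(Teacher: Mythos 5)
Your proposal is correct and follows essentially the same route as the paper: separate the integral into the product of a spatial and a temporal factor, bound the spatial one by $CR^N$ using $H\leq C$ and $\mathcal{F}^{\ell}\leq 1$ on the support $\{1\leq|x|\leq R\}$, and get $CT^{1-p'}=CT^{1-N/4}$ for the temporal one from the standard cut-off estimate \eqref{N2.derivetavizeta.estimate} together with $p'=N/4$ at $p=\frac{N}{N-4}$. (The paper's proof nominally cites \eqref{zeta.estimate} for the time integral, which is evidently a typo for \eqref{N2.derivetavizeta.estimate}; your version states the intended step explicitly.)
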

\begin{proof}
Applying the properties of the test functions as defined in \eqref{phic}, we derive
     $$
\begin{aligned}
 \int_Q  \varphi_1^{-\frac{1}{p-1}}(x,t)\left|\partial_t\varphi_1(x,t)\right|^{\frac{N}{4}}\mathrm{d}x \mathrm{d}t&=\left(\int_{D^c} \psi(x)\mathrm{d}x \right)\left(\int_0^T \eta^{-\frac{1}{p-1}}(t)\big|\partial_t\eta(t)\big|^{\frac{N}{4}} \mathrm{d}t\right) .
\end{aligned}
$$

\noindent implying the lemma \ref{A(x.estimate).N=3}, we get
\begin{equation}
    \begin{aligned}\label{jjj}
        \int_{D^c} \psi(x) \mathrm{d}x &=\int_{1<|x|<R} H(x)  \mathcal{F}^{\ell}\left(\frac{\ln \left( \frac{|x|}{\sqrt{R}}\right)}{\ln (\sqrt{R})}\right)\mathrm{d}x \\
         & \leq C\int_{1<|x|<R} \mathcal{F}^{\ell}\left(\frac{\ln \left( \frac{|x|}{\sqrt{R}}\right)}{\ln (\sqrt{R})}\right)\mathrm{d}x\\
        & \stackrel{|x|=r}{\leq} C\int_1^{R} r^{N-1} \mathrm{d}r\\
        &\leq CR^{N}.
    \end{aligned}
\end{equation}

\noindent Therefore, \eqref{critpart} follows from \eqref{zeta.estimate} and \eqref{jjj}.
\end{proof}

The evidence of the subsequent lemma can be obtained through simple calculations because we deal with the radial function $\mathcal{F}(|x|)$.

\begin{Lem}\label{salistiry}
    The following estimates hold:

$$
\left|\nabla\cdot\Delta \mathcal{F}^{\ell}\left(\frac{\ln\left(\frac{|x|}{\sqrt{R}}\right) }{\ln{\sqrt{R}}}\right)\right|  \leq C  \frac{1}{|x|^3\ln{\sqrt{R}}}\mathcal{F}^{\ell-4}\left(\frac{\ln\left(\frac{|x|}{\sqrt{R}}\right) }{\ln{\sqrt{R}}}\right),$$ and $$\left|\Delta^2 \mathcal{F}^{\ell}\left(\frac{\ln\left(\frac{|x|}{\sqrt{R}}\right) }{\ln{\sqrt{R}}}\right)\right| 
 \leq C  \frac{1}{|x|^4\ln{\sqrt{R}}} \mathcal{F}^{\ell-4}\left(\frac{\ln\left(\frac{|x|}{\sqrt{R}}\right) }{\ln{\sqrt{R}}}\right),$$
where $\cdot$ denotes the inner product in $\mathbb{R}^N$.
\end{Lem}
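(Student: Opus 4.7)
The plan is to exploit the radial structure: set $\Phi(x) := \mathcal{F}^{\ell}(g(|x|))$ with $g(r) := (\ln r - \ln\sqrt{R})/\ln\sqrt{R}$, and reduce both estimates to one-dimensional chain-rule bookkeeping for $\phi(r) := \mathcal{F}^{\ell}(g(r))$. First I would compute $g^{(k)}(r) = (-1)^{k-1}(k-1)!\,r^{-k}/\ln\sqrt{R}$, so $|g^{(k)}(r)| \leq C_k r^{-k}/\ln\sqrt{R}$ for $k=1,2,3,4$. The key structural observation is that \emph{every} derivative of $g$, regardless of its order, carries exactly one factor of $(\ln\sqrt{R})^{-1}$; this is the mechanism producing the logarithmic gain in the final bound.

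Next I would apply Faà di Bruno's formula to the composition $\phi = \mathcal{F}^{\ell} \circ g$. A generic term in $\phi^{(k)}(r)$ has the form $C \cdot \tfrac{d^j}{dy^j}(\mathcal{F}^{\ell})(g(r)) \cdot g^{(m_1)}(r)\cdots g^{(m_j)}(r)$ with $m_1 + \cdots + m_j = k$ and $j \geq 1$. The $\mathcal{F}$-factor is bounded by $C\,\mathcal{F}^{\ell-j}(g(r))$ since differentiating $\mathcal{F}^{\ell}$ exactly $j$ times produces a polynomial in $\mathcal{F},\mathcal{F}',\ldots$ with leading power $\mathcal{F}^{\ell-j}$ (and every $\mathcal{F}^{(n)}$ uniformly bounded). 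The $g$-product is bounded by $C r^{-k}(\ln\sqrt{R})^{-j}$. Absorbing $(\ln\sqrt{R})^{-j} \leq (\ln\sqrt{R})^{-1}$ for $j \geq 1$ and $R$ sufficiently large, and $\mathcal{F}^{\ell-j} \leq \mathcal{F}^{\ell-k}$ (since $\mathcal{F} \leq 1$ and $j \leq k$), I obtain the master estimate
\begin{equation*}
|\phi^{(k)}(r)| \;\leq\; \frac{C}{r^{k}\ln\sqrt{R}}\,\mathcal{F}^{\ell-k}(g(r)), \qquad k = 1,2,3,4.
\end{equation*}

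Finally, I would invoke the radial-function formulas in $\mathbb{R}^N$. For radial $\Phi(x) = \phi(|x|)$, the Laplacian $(\Delta\Phi)(x) = H(|x|)$ is again radial with $H(r) = \phi''(r) + \tfrac{N-1}{r}\phi'(r)$; hence $\nabla\Delta\Phi = H'(|x|)\tfrac{x}{|x|}$, while $\Delta^2\Phi = H''(|x|) + \tfrac{N-1}{|x|}H'(|x|)$, which expands to
\begin{equation*}
\Delta^2\Phi \;=\; \phi^{(iv)}(r) + \frac{2(N-1)}{r}\phi'''(r) + \frac{(N-1)(N-3)}{r^2}\phi''(r) - \frac{(N-1)(N-3)}{r^3}\phi'(r).
\end{equation*}
Substituting the master estimate, each summand in $|\nabla\Delta\Phi|$ is controlled by $C r^{-3}(\ln\sqrt{R})^{-1}\mathcal{F}^{\ell-3}(g)$ and each summand in $|\Delta^2\Phi|$ by $C r^{-4}(\ln\sqrt{R})^{-1}\mathcal{F}^{\ell-m}(g)$ for some $m \in \{1,2,3,4\}$; using $\mathcal{F} \leq 1$ to replace all these by $\mathcal{F}^{\ell-4}(g)$ yields the two claimed estimates.

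The only non-routine point is the accounting in the middle step: one must verify that in Faà di Bruno's formula for $\phi^{(k)}$ at least one $g$-derivative always appears (guaranteeing at least one factor of $(\ln\sqrt{R})^{-1}$), and that higher negative powers of $\ln\sqrt{R}$ may be absorbed into a single one for $R$ large. No further analytical tool is required beyond the chain rule and the trivial monotonicity $\mathcal{F}^{a} \leq \mathcal{F}^{b}$ for $a \geq b$ when $\mathcal{F} \leq 1$; the remainder is straightforward computation.
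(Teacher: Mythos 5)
Your proof is correct and is exactly the computation the paper leaves implicit: the authors only remark that the lemma follows from "simple calculations" for the radial function, and your reduction to one-dimensional chain-rule estimates for $\phi(r)=\mathcal{F}^{\ell}(g(r))$ together with the radial formulas for $\nabla\Delta$ and $\Delta^2$ is that calculation carried out in full. The key accounting points you flag — every Faà di Bruno term carries at least one factor $(\ln\sqrt{R})^{-1}$, and $\mathcal{F}^{\ell-j}\leq\mathcal{F}^{\ell-4}$ since $0\leq\mathcal{F}\leq 1$ and $j\leq 4$ — are exactly what is needed, so nothing is missing.
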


\noindent From properties of $H(x)$ and from the equation \eqref{bxi} we deduce that
$$
\begin{aligned}
\Delta^2\psi(x)= & H(x) \Delta^2 \mathcal{F}^{\ell}\left(\frac{\ln\left(\frac{|x|}{\sqrt{R}}\right) }{\ln{\sqrt{R}}}\right)+4\nabla H(x)  \nabla\cdot\Delta \mathcal{F}^{\ell}\left(\frac{\ln\left(\frac{|x|}{\sqrt{R}}\right) }{\ln{\sqrt{R}}}\right),
\end{aligned}
$$
here using lemma \ref{salistiry}, we have
\begin{equation}\label{b.estimate}
\begin{aligned}
\left|\Delta^2\psi(x)\right| & \leq C\left(  \frac{1 }{|x|^{4}\ln{\sqrt{R}}}  +  \frac{|x|^{1-N}}{|x|^3\ln{\sqrt{R}}} \right) \mathcal{F}^{\ell-4}\left(\frac{\ln\left(\frac{|x|}{\sqrt{R}}\right) }{\ln{\sqrt{R}}}\right)\\
&\leq C\frac{1 }{|x|^{4}\ln{\sqrt{R}}}  \mathcal{F}^{\ell-4}\left(\frac{\ln\left(\frac{|x|}{\sqrt{R}}\right) }{\ln{\sqrt{R}}}\right).
\end{aligned}
\end{equation}

\begin{Lem}\label{estimate.b}
    The following integral can estimate as
    \begin{equation*}
        \int_Q  \varphi_1^{-\frac{1}{p-1}}(x,t)\left|\Delta^2 \varphi_1(x,t)\right|^{p'} \mathrm{d}x \mathrm{d}t\leq CT^1  (\ln{R})^{-\frac{N}{4}}.
    \end{equation*}
    
\end{Lem}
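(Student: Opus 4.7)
The plan is to mirror the two-step pattern of Lemmas \ref{estimate.b.n=2} and \ref{estimate.b.n=3}, exploiting the product form $\varphi_1(t,x)=\eta(t)\psi(x)$ to split
\[
\int_Q\varphi_1^{-\frac{1}{p-1}}|\Delta^2\varphi_1|^{p'}\,\mathrm{d}x\,\mathrm{d}t = \left(\int_0^T\eta(t)\,\mathrm{d}t\right)\left(\int_{D^c}\psi^{-\frac{1}{p-1}}(x)|\Delta^2\psi(x)|^{p'}\,\mathrm{d}x\right),
\]
and evaluate the time factor via \eqref{zeta.estimate} to obtain $CT$. This reduces the problem to estimating the purely spatial integral, and throughout I would use the critical-exponent identities $\frac{1}{p-1}=\frac{N-4}{4}$ and $p'=\frac{N}{4}$ that arise from $p=\frac{N}{N-4}$.

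For the spatial integral, I would first apply Lemma \ref{A(x.estimate).N=3}: since $N\geq 5$, the harmonic function $H(x)=1-|x|^{2-N}$ admits a uniform lower bound $H(x)\geq C_0>0$ on the support of $\Delta^2\psi$ for all sufficiently large $R$, which yields $\psi^{-\frac{1}{p-1}}(x)\leq C\,\mathcal{F}^{-\frac{\ell(N-4)}{4}}$ on the logarithmic annulus $\sqrt{R}\leq|x|\leq R$. Combining this with the pointwise estimate \eqref{b.estimate} raised to the power $p'=N/4$ gives
\[
\psi^{-\frac{1}{p-1}}(x)|\Delta^2\psi(x)|^{p'}\leq C\,|x|^{-N}(\ln\sqrt{R})^{-N/4}\,\mathcal{F}^{\ell-N}\!\left(\frac{\ln(|x|/\sqrt{R})}{\ln\sqrt{R}}\right).
\]

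The critical bookkeeping step is the exponent identity $\frac{(\ell-4)N}{4}-\frac{\ell(N-4)}{4}=\ell-N$, together with the standing hypothesis $\ell\geq \frac{4p}{p-1}=N$, which ensures $\mathcal{F}^{\ell-N}\leq 1$ on the support. Passing to polar coordinates on the annulus then reduces the spatial factor to a multiple of $(\ln\sqrt{R})^{-N/4}\int_{\sqrt{R}}^R r^{-1}\,\mathrm{d}r$, and combining with the time factor $CT$ produces the claimed bound. The main obstacle is precisely this exponent tracking on $\mathcal{F}$ together with verifying the uniform lower bound on $H(x)$ throughout the logarithmic annulus; once these are in place, the rest is a routine polar-coordinate computation mirroring the two earlier lemmas.
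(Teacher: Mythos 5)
Your proposal is correct and follows essentially the same route as the paper: split off the time factor $CT$ via \eqref{zeta.estimate}, use that $H(x)=1-|x|^{2-N}$ is bounded below on the annulus $\sqrt{R}<|x|<R$ where $\Delta^2\psi$ is supported, combine with \eqref{b.estimate} raised to the power $p'=\tfrac{N}{4}$ (your exponent identity $\tfrac{(\ell-4)N}{4}-\tfrac{\ell(N-4)}{4}=\ell-N\geq 0$ is exactly the bookkeeping the paper performs implicitly), and integrate in polar coordinates. One remark: since $\int_{\sqrt{R}}^{R} r^{-1}\,\mathrm{d}r=\tfrac12\ln R$, what you actually obtain is $CT(\ln R)^{1-\frac{N}{4}}$ rather than the $(\ln R)^{-\frac{N}{4}}$ displayed in the lemma's statement — but this is precisely what the paper's own proof ends with (its last display reads $C\bigl((\ln R)^{-\frac{N}{4}+1}+R^{\frac{N}{2}(1-\frac{N}{2})}(\ln R)^{-\frac{N}{4}}\bigr)$), it is the bound actually invoked later in the critical case of Theorem \ref{th.n=3}, and it is harmless because $1-\tfrac{N}{4}<0$ for $N\geq 5$.
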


\begin{proof}
Utilizing the properties of the test functions described in equations \eqref{phi} to \eqref{phi1}, we can infer    
\begin{equation*}
  \int_Q  \varphi_1^{-\frac{1}{p-1}}(x,t)\left|\Delta^2 \varphi_1(x,t)\right|^{p'} \mathrm{d}x \mathrm{d}t=\left(\int_0^T\eta(t) \mathrm{d}t \right)\left(\int_{D^c} \psi_2^{-\frac{1}{p-1}}(x)\left|\Delta^2\psi(x)\right|^{p^{\prime}}\mathrm{d}x \right), 
\end{equation*}

\noindent time variable integral is the same as \eqref{zeta.estimate},  by the inequality \eqref{b.estimate} we have 
$$
\begin{aligned}
    \int_{D^c} &\psi_2^{-\frac{1}{p-1}}(x) \left|\Delta^2\psi(x)\right|^{p'} \mathrm{d}x\\ 
    & =   \int_{1<|x|<R} H(x)^{-\frac{1}{p-1}} \mathcal{F}^{-\frac{\ell}{p-1}}\left(\frac{\ln\left(\frac{|x|}{\sqrt{R}}\right) }{\ln{\sqrt{R}}}\right)\left|\Delta^2\psi(x)\right|^{p'}\mathrm{d}x \\
    &\leq C\int_{1<|x|<R} \mathcal{F}^{-\frac{\ell}{p-1}}\left(\frac{\ln\left(\frac{|x|}{\sqrt{R}}\right) }{\ln{\sqrt{R}}}\right)\left[   \frac{1 }{|x|^{4}\ln{\sqrt{R}}} \mathcal{F}^{\ell-4}\left(\frac{\ln\left(\frac{|x|}{\sqrt{R}}\right) }{\ln{\sqrt{R}}}\right)\right]^{p'}\mathrm{d}x.
\end{aligned}
$$

\noindent Upon substituting variables with polar coordinates and $p=\frac{N}{N-4}$, we obtain
\begin{equation*}
    \begin{aligned}
        \int_{D^c} \psi_2^{-\frac{1}{p-1}}&(x) \left|\Delta^2\psi(x)\right|^{p'} \mathrm{d}x\\
        &\stackrel{|x|=r}{\leq} C(\ln{R})^{-\frac{N}{4}} \int_{\sqrt{R}<r<R} r^{-1}dr+CR^{(1-N)\frac{N}{4}}(\ln{R})^{-\frac{N}{4}}\int_{\sqrt{R}<r<R} r^{\frac{N}{4}-1}dr \\
        &\leq  C\left( (\ln{R})^{-\frac{N}{4}+1}+R^{\frac{N}{2}\left(1-\frac{N}{2}\right)}(\ln{R})^{-\frac{N}{4}}\right).
    \end{aligned}
\end{equation*}
which brings the proof to its conclusion.
\end{proof}

\subsection{Properties of $\boldsymbol{\varphi_2(x, t)}$}
Let $B(x)$ be a biharmonic function in $D^c$ given by

\begin{equation}\label{H(x)}
B(x)= \begin{cases}|x|^2 \ln |x|-|x|^2+\ln |x|+1 & \text { if } \quad N=2, \\ \vspace{1mm} |x|^{-1}+|x|-2 & \text { if } \quad N=3, \\ \vspace{1mm} 2\ln{|x|} +|x|^{-2}- 1 & \text { if } \quad N=4, \\ \vspace{1mm} |x|^{2-N}-1+\frac{N-2}{N-4}\left(1-|x|^{4-N}\right) & \text { if } \quad N \geq 5.\end{cases}
\end{equation}
\begin{Lem}
The function $B(x)$ satisfies the following properties:

    \begin{itemize}
        \item[(i)] $B(x) \geq 0$;
        \item[(ii)]  $\Delta^2 B(x)=0$ in $D^c$;
        \item[(iii)] $B(x)=\frac{\partial B(x)}{\partial n}=\Delta B(x)=0$ on $\partial D$.
    \end{itemize}
\end{Lem}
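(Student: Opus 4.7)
The lemma is a purely computational statement about an explicit radial function, so the natural approach is direct verification of each property, dimension by dimension, using the radial form of the Laplacian $\Delta f(r) = f''(r) + \frac{N-1}{r}f'(r)$. The identities $\Delta |x|^\alpha = \alpha(\alpha+N-2)\,|x|^{\alpha-2}$ together with their logarithmic analogs reduce everything to elementary algebra.

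For property (ii), biharmonicity, I would compute $\Delta B$ case by case. The cleanest case is $N \geq 5$: the term $|x|^{2-N}$ is harmonic and contributes nothing, while the remaining terms combine to give $\Delta B = 2(N-2)\,|x|^{2-N}$, which is again harmonic, so $\Delta^2 B \equiv 0$. The other dimensions are treated the same way, checking a small number of cancellations between the logarithmic and power pieces.

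For property (iii) I would evaluate $B$ and its normal derivative at $r = 1$ by direct substitution. For $N \geq 5$ there is a pleasant factorization
\[
B'(r) = (2-N)\,r^{1-N} + (N-2)\,r^{3-N} = (N-2)\,r^{1-N}(r^2 - 1),
\]
which vanishes at $r = 1$; together with $B(1) = 0$ this yields the Dirichlet and Neumann parts of (iii). The other dimensions give analogous factorizations. This same factorization yields property (i) essentially for free: on $[1,\infty)$ the factor $r^2 - 1$ is nonnegative and the prefactor $(N-2)\,r^{1-N}$ is positive, so $B' \geq 0$ on $[1,\infty)$, and combined with $B(1) = 0$ this gives $B \geq 0$ throughout $D^c$.

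The anticipated obstacle is not conceptual but bookkeeping: the low dimensions $N = 2$ and $N = 4$ mix logarithmic and rational terms, so the cancellations in the computation of $\Delta B$ require careful accounting, and the coefficients in the definition of $B$ must be exactly the ones stated for $B(1)$ and $B'(1)$ to vanish simultaneously. The factorization viewpoint used for $N \geq 5$ suggests seeking a similar product structure for $B'$ in the low-dimensional regimes (indeed for $N = 4$ one finds $B'(r) = 2r^{-3}(r^2 - 1)$, mirroring the same pattern), which in turn makes the nonnegativity argument uniform across all four cases.
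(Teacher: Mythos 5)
The paper states this lemma without proof, so the only thing to measure your argument against is the set of explicit computations of $B$, $\nabla B$ and $\Delta B$ carried out later (Lemmas \ref{H(x.estimate).N2.normal}, \ref{H(x.estimate).N4.normal}, \ref{H(x.estimate).N3.normal}). Your direct radial verification agrees with those: the factorization $B'(r)=(N-2)r^{1-N}(r^2-1)$ for $N\geq 5$ is correct, it gives $B(1)=B'(1)=0$ and $B'\geq 0$ on $[1,\infty)$, hence (i), and $\Delta B=2(N-2)r^{2-N}$ is harmonic, hence (ii). (For $N=2$ the monotonicity needs one more step than the other cases, e.g. $B'(r)=r^{-1}\bigl(2r^2\ln r-r^2+1\bigr)$ with $\frac{d}{dr}\bigl(2r^2\ln r-r^2+1\bigr)=4r\ln r\geq 0$ on $[1,\infty)$, but this is routine.)

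The genuine gap is that you never address the last equality in (iii), namely $\Delta B=0$ on $\partial D$ --- and no argument can supply it, because that claim is false in every dimension except $N=2$. Your own formula $\Delta B(x)=2(N-2)|x|^{2-N}$ for $N\geq 5$ (and likewise $\Delta B=2|x|^{-1}$ for $N=3$, $\Delta B=4|x|^{-2}$ for $N=4$, exactly as the paper computes) gives $\Delta B=2(N-2)\neq 0$ on $|x|=1$; only for $N=2$, where $\Delta B=4\ln|x|$, does it vanish there. So the lemma as stated cannot be proved; it should assert only $B=\frac{\partial B}{\partial n}=0$ on $\partial D$, which is all that the weak formulation of Problem II (Dirichlet conditions, where the test function must satisfy $\varphi_2=\frac{\partial\varphi_2}{\partial n}=0$) actually requires, and which your argument does establish. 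You should flag this explicitly rather than silently prove only two of the three claimed boundary identities.
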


\noindent We introduce trial functions of the form
\begin{equation}\label{psi1}
  \varphi_2(t, x)=\eta(t) \phi(x)=\zeta^{\ell}\left(\frac{t}{T}\right)B(x) \xi^{\ell}\left(\frac{|x|}{R}\right), \quad(t, x) \in Q.  
\end{equation}

\noindent where $\eta(t)$ cut-off function as \eqref{zeta}, $B(x)$ is a biharmonic function as \eqref{H(x)} and $\xi: \mathbb{R} \rightarrow[0,1]$ is a smooth function as \eqref{xi.s}.

The estimate of the test function will be different depending on the dimension of $\R^N$. However, notice that only case $N=2,4$ differs from other cases. Therefore, we inspect $N=2,4$  and $N\geq 5$ cases of biharmonic function \eqref{H(x)} differently. 

\subsection{The case when $\boldsymbol{N=2}$}
\begin{Lem}\label{H(x.estimate).N2.normal}
     For $R<|x|<2 R$, the following estimates are valid
     \begin{equation*}
         \begin{aligned}
             B(x) &\leq C R^2\ln R,\\
             |\nabla B(x)| &\leq C R\ln{R},\\
             |\Delta B(x)|&\leq C\ln{R},
         \end{aligned}
     \end{equation*}
 and
$$|\nabla\cdot\Delta B(x)|\leq CR^{-1}.$$
\end{Lem}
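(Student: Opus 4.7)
The proof is a direct computation, exploiting the fact that $B$ is radial. I would write $B(x) = g(r)$ with $r = |x|$, where
\[
g(r) = r^{2}\ln r - r^{2} + \ln r + 1,
\]
and rely on the standard formulas for radial functions in $\mathbb{R}^{2}$:
$|\nabla B(x)| = |g'(r)|$, $\Delta B(x) = g''(r) + \frac{1}{r}g'(r)$, and $|\nabla(\Delta B)(x)| = |(g''(r) + \frac{1}{r}g'(r))'|$. The plan is to compute $g'$ and $g''$ once, then plug in and read off each estimate.

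The first two bounds are immediate. Differentiating gives
\[
g'(r) = 2r\ln r - r + \frac{1}{r}, \qquad g''(r) = 2\ln r + 1 - \frac{1}{r^{2}},
\]
so on $R<r<2R$ (with $R$ large), each term of $g$ and of $g'$ is controlled by the leading logarithmic term, yielding $B(x) \leq CR^{2}\ln R$ and $|\nabla B(x)| = |g'(r)| \leq CR\ln R$.

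The third estimate is the key point and the one I would present most carefully, since there is a cancellation worth pointing out. Substituting into the radial Laplacian in dimension $N=2$,
\[
\Delta B(x) = g''(r) + \tfrac{1}{r}g'(r) = \Bigl(2\ln r + 1 - \tfrac{1}{r^{2}}\Bigr) + \Bigl(2\ln r - 1 + \tfrac{1}{r^{2}}\Bigr) = 4\ln r,
\]
which gives $|\Delta B(x)| \leq C\ln R$ on $R<r<2R$, and is consistent with the biharmonicity $\Delta^{2}B = 0$ since $\ln r$ is harmonic in $\mathbb{R}^{2}\setminus\{0\}$. The fourth estimate then follows by differentiating the explicit expression $\Delta B(x) = 4\ln r$: for the radial function $h(r) = 4\ln r$, one has $|\nabla h| = |h'(r)| = 4/r$, hence $|\nabla(\Delta B)(x)| \leq CR^{-1}$ on $R<r<2R$.

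There is no serious obstacle; the only step that requires care is the algebraic simplification producing $\Delta B = 4\ln r$, and it is worth noting that the single-variable identity $g''(r) + r^{-1}g'(r) = 4\ln r$ explains why all subsequent bounds are sharp in their logarithmic dependence on $R$. The rest is elementary pointwise estimation on the annulus $R<|x|<2R$, using $\ln(2R)\leq C\ln R$ for $R$ large.
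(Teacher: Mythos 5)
Your proposal is correct and follows essentially the same route as the paper: direct differentiation of the radial profile $g(r)=r^2\ln r - r^2 + \ln r + 1$, the cancellation $g''(r)+r^{-1}g'(r)=4\ln r$, and the resulting $|\nabla(\Delta B)|=4/r\leq CR^{-1}$ on the annulus. No gaps.
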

\begin{proof}
By \eqref{H(x)}, we arrive at
    $$B(x)=|x|^2 \ln |x|-|x|^2+\ln |x|+1\leq C R^2\ln R, $$
by differentiating, we obtain 
$$\nabla B(x)=2x\ln{|x|}-x(1-|x|^{-2}), \quad |\nabla B(x)| \leq CR\ln{R}.$$
Let us define  
    $$
h(r)=r^2 \ln r-r^2+\ln r+1,
$$
then $$
h^{\prime}(r)=2 r \ln r-r+r^{-1},$$ and the second derivative is 
 $$h^{\prime \prime}(r)=2 \ln r+\left(1-r^{-2}\right),$$
thus, we get
$$
\begin{aligned}
&\Delta B(x) =h^{\prime \prime}(r)+\frac{1}{r} h^{\prime}(r)  =4 \ln r,\\& |\Delta B(x)|\leq C \ln R, \quad r=|x| .
\end{aligned}
$$
Then we have
$$\nabla\cdot\Delta B(x)=\frac{x}{|x|^2}, \quad |\nabla\cdot\Delta B(x)|\leq CR^{-1},$$
and that completes the proof.
\end{proof}

\begin{Lem}\label{estimate.t.normal} 
    For $R>0$ and $T>0$ the following estimate holds:
    \begin{equation*}
         \int_Q \varphi_2^{-\frac{1}{p-1}}(x,t)\left|\partial_t \varphi_2(x,t)\right|^{p^{\prime}} \mathrm{d}x \mathrm{d}t\leq C T^{1-p^{\prime}}R^{4}\ln{R}.
    \end{equation*}
   
\end{Lem}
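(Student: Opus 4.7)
The plan is to mirror the argument of Lemma \ref{estimate.t.n=2} essentially verbatim, with the harmonic profile $H(x)$ replaced by the biharmonic profile $B(x)$, and to absorb the extra $R^{2}\ln R$ growth of $B$ into the final estimate. Because $\varphi_2(t,x)=\eta(t)\phi(x)$ is a tensor product and the exponents satisfy $p'-\tfrac{1}{p-1}=1$, the integrand factors cleanly as
\begin{equation*}
\varphi_2^{-\frac{1}{p-1}}(x,t)\,|\partial_t\varphi_2(x,t)|^{p'} \;=\; \phi(x)\,\eta^{-\frac{1}{p-1}}(t)\,|\partial_t\eta(t)|^{p'},
\end{equation*}
so Fubini reduces the left-hand side of the claim to the product of a purely spatial integral and a purely temporal one.

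First I would dispose of the time integral. Since $\eta(t)=\zeta^{\ell}(t/T)$ is identical to the time component used in Subsection~2.1, the computation in \eqref{N2.derivetavizeta.estimate} applies without modification and yields $\int_0^T \eta^{-\frac{1}{p-1}}|\partial_t\eta|^{p'}\,dt \leq C\,T^{1-p'}$.

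Second I would handle the space integral $\int_{D^c}\phi(x)\,dx=\int_{1<|x|<2R}B(x)\,\xi^{\ell}(|x|/R)\,dx$. On the support of the integrand one has $|x|<2R$, and Lemma~\ref{H(x.estimate).N2.normal} provides the pointwise bound $B(x)\leq C R^{2}\ln R$. Using $0\leq\xi\leq 1$ and switching to polar coordinates in $\mathbb{R}^{2}$ (where the area element is $r\,dr\,d\theta$), the spatial integral is controlled by
\begin{equation*}
\int_{1<|x|<2R} B(x)\,\xi^{\ell}\!\left(\tfrac{|x|}{R}\right)dx \;\leq\; C\,R^{2}\ln R \int_{1}^{2R} r\,dr \;\leq\; C\,R^{4}\ln R.
\end{equation*}
Multiplying the spatial and temporal bounds together gives the required $C\,T^{1-p'}R^{4}\ln R$.

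I do not expect any genuine obstacle: the structural reason the estimate is different from Lemma~\ref{estimate.t.n=2} is simply that in dimension two the biharmonic profile $B(x)=|x|^{2}\ln|x|-|x|^{2}+\ln|x|+1$ grows like $|x|^{2}\ln|x|$ rather than like $\ln|x|$, which upgrades the spatial bound from $R^{2}\ln R$ to $R^{4}\ln R$; the rest of the cut-off machinery is recycled. The only minor point to verify is that Lemma~\ref{H(x.estimate).N2.normal} is applied only where $|x|\leq 2R$, but this is automatic from the support of $\xi^{\ell}(|x|/R)$.
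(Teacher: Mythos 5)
Your argument is correct and is essentially identical to the paper's proof: the same tensor-product factorization, the same bound $B(x)\leq CR^{2}\ln R$ from Lemma \ref{H(x.estimate).N2.normal} combined with $\int_{1}^{2R}r\,dr\leq CR^{2}$ for the spatial factor, and the same estimate \eqref{N2.derivetavizeta.estimate} for the temporal factor (the paper's proof cites \eqref{zeta.estimate} here, which is evidently a slip; your reference is the right one). Nothing further is needed.
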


\begin{proof}
Drawing upon the features of the test functions  \eqref{psi1}, we arrive at

    $$
\begin{aligned}
 \int_Q  \varphi_2^{-\frac{1}{p-1}}(x,t)\left|\partial_t \varphi_2(x,t)\right|^{p^{\prime}}\mathrm{d}x \mathrm{d}t =\left(\int_{D^c} \phi(x)\mathrm{d}x \right)\left(\int_0^T \eta^{-\frac{1}{p-1}}(t)\big|\partial_t\eta(t)\big|^{p^{\prime}} \mathrm{d}t\right).
\end{aligned}
$$

\noindent For sufficiently large $R$ and for $l> \frac{4p}{p-1}$, we obtain  
\begin{equation}\label{xi.integral.N2.normal}
    \begin{aligned}
        \int_{D^c} \phi(x)\mathrm{d}x&=\int_{1<|x|<2R} B(x) \xi^l\left(\frac{|x|}{R}\right)\mathrm{d}x\\
        & \leq CR^2\ln{R}\int_{1<|x|<2R} \xi^l\left(\frac{|x|}{R}\right)\mathrm{d}x\\
        & \stackrel{|x|=r}{\leq} CR^2\ln{R}\int_{1<r<2R}  r \mathrm{d}r\\
        &\leq CR^{4}\ln{R}.
    \end{aligned}
\end{equation}

\noindent Note that the lemma \ref{H(x.estimate).N2.normal} was also needed to estimate bigarmonic function in \eqref{xi.integral.N2.normal}. The integral with the cut-off function is provided in \eqref{zeta.estimate}, combining the estimate we complete the proof.
\end{proof}
We have already shown the estimate for the test function in lemma \ref{samet.lemma} and for the biharmonic function in lemma \ref{H(x.estimate).N2.normal}. Therefore, the connection of both lemmas is enough to conclude the following lemma. 

\begin{Lem}\label{samet.lemma.normal.n=2}
    For $R<|x|<2 R$, the following estimates hold:
$$\left|B(x) \Delta^2 \xi^{\ell}\left(\frac{|x|}{R}\right)\right|  \leq C R^{-2}\ln{R}\xi^{\ell-4}\left(\frac{|x|}{R}\right),$$ 
$$\left|\nabla B(x) \cdot \nabla\left(\Delta \xi^{\ell}\left(\frac{|x|}{R}\right)\right)\right|  \leq C R^{-2}\ln{R}\xi^{\ell-4}\left(\frac{|x|}{R}\right),$$ 
$$\left|\Delta B(x) \Delta \xi^{\ell}\left(\frac{|x|}{R}\right)\right| \leq C R^{-2}\ln{R}\xi^{\ell-4}\left(\frac{|x|}{R}\right),$$ and
$$\left|\nabla(\Delta B(x)) \cdot \nabla \xi^{\ell}\left(\frac{|x|}{R}\right)\right|  \leq CR^{-2}\xi^{\ell-4}\left(\frac{|x|}{R}\right),$$
where $\cdot$ denotes the inner product in $\mathbb{R}^N$.
\end{Lem}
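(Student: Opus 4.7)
The lemma is a termwise product estimate for the four ingredients that appear in the Leibniz expansion \eqref{bxi} of $\Delta^2\phi(x)$ with $\phi(x)=B(x)\,\xi^{\ell}(|x|/R)$. All the genuine work has already been done in the two previous lemmas: Lemma \ref{H(x.estimate).N2.normal} records the sizes of $B,\nabla B,\Delta B,\nabla\Delta B$ on the annulus $R<|x|<2R$ in dimension two, and Lemma \ref{samet.lemma} records the sizes of $\nabla(\Delta\xi^{\ell}(|x|/R))$ and $\Delta^{2}\xi^{\ell}(|x|/R)$. The lemma is then obtained by a term-by-term product.

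First I would record the two missing cutoff estimates
$$
\left|\nabla\xi^{\ell}(|x|/R)\right|\leq CR^{-1}\xi^{\ell-1}(|x|/R),\qquad \left|\Delta\xi^{\ell}(|x|/R)\right|\leq CR^{-2}\xi^{\ell-2}(|x|/R),
$$
which follow from the chain rule together with the assumed uniform bounds on $\xi',\xi''$. Because $\xi\leq 1$ and $\ell\geq 4p/(p-1)>4$, each factor $\xi^{\ell-k}$ with $k\leq 4$ is absorbed into $\xi^{\ell-4}$; this is the sole reason the common exponent $\ell-4$ appears in all four conclusions.

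Next I would multiply termwise on $R<|x|<2R$. For the first three bounds, the respective products $B\cdot\Delta^{2}\xi^{\ell}$, $\nabla B\cdot\nabla(\Delta\xi^{\ell})$ and $\Delta B\cdot\Delta\xi^{\ell}$ evaluate to $(R^{2}\ln R)\cdot R^{-4}$, $(R\ln R)\cdot R^{-3}$ and $(\ln R)\cdot R^{-2}$, each collapsing to the claimed $CR^{-2}\ln R\,\xi^{\ell-4}(|x|/R)$. For the last inequality, the key observation is that $\nabla\Delta B(x)=x/|x|^{2}$ carries no logarithmic factor, so pairing $|\nabla\Delta B|\leq CR^{-1}$ with $|\nabla\xi^{\ell}|\leq CR^{-1}\xi^{\ell-4}(|x|/R)$ gives the bound $CR^{-2}\xi^{\ell-4}(|x|/R)$ without a $\ln R$.

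I do not foresee any genuine obstacle; the proof is a routine bookkeeping step. The only point worth attending to carefully is that the logarithm must indeed drop out of the $\nabla\Delta B\cdot\nabla\xi^{\ell}$ term, because this asymmetry between the four contributions is what will shape the final $L^{p'}$ estimate for $\Delta^{2}\varphi_{2}$ in the subsequent lemma. Everything else — the dimension $N=2$, the particular form of $B$, and the choice of $\ell$ — enters only through invocations of already-proved estimates.
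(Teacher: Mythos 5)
Your proposal is correct and follows exactly the route the paper intends: the paper's own justification is the one-sentence remark that the lemma follows by combining Lemma \ref{samet.lemma} with Lemma \ref{H(x.estimate).N2.normal}, and your termwise products $(R^{2}\ln R)\cdot R^{-4}$, $(R\ln R)\cdot R^{-3}$, $(\ln R)\cdot R^{-2}$, $R^{-1}\cdot R^{-1}$ reproduce the four stated bounds, including the absence of $\ln R$ in the last one. Your explicit recording of the first- and second-order cutoff bounds $|\nabla\xi^{\ell}|\leq CR^{-1}\xi^{\ell-1}$ and $|\Delta\xi^{\ell}|\leq CR^{-2}\xi^{\ell-2}$, together with the absorption $\xi^{\ell-k}\leq\xi^{\ell-4}$ for $k\leq 4$, is a small but welcome addition that the paper leaves implicit.
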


\begin{Lem}\label{estimate.b.normal}
    The following integral can be evaluated as
    \begin{equation*}
        \int_Q  \varphi_2^{-\frac{1}{p-1}}(x,t)\left|\Delta^2 \varphi_2(x,t)\right|^{p'} \mathrm{d}x \mathrm{d}t \leq CT^1 R^{-\frac{2}{p-1}} (\ln{R})^{\frac{p}{p-1}} .
    \end{equation*}   
\end{Lem}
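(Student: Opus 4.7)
The plan is to separate time and space and reduce everything to a purely spatial computation. Since $\varphi_2(t,x)=\eta(t)\phi(x)$ and $\Delta^2$ acts only in $x$, the integrand factors as
$$\varphi_2^{-\frac{1}{p-1}}|\Delta^2\varphi_2|^{p'} \;=\; \eta(t)^{p'-\frac{1}{p-1}}\,\phi(x)^{-\frac{1}{p-1}}|\Delta^2\phi(x)|^{p'} \;=\; \eta(t)\,\phi(x)^{-\frac{1}{p-1}}|\Delta^2\phi(x)|^{p'},$$
because $p'-\tfrac{1}{p-1}=1$. By \eqref{zeta.estimate} the time factor is bounded by $CT$, so the task reduces to estimating $\int_{D^c}\phi^{-1/(p-1)}|\Delta^2\phi|^{p'}\,dx$.

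Next I would expand $\Delta^2\phi=\Delta^2\!\bigl(B(x)\,\xi^\ell(|x|/R)\bigr)$ via the Leibniz formula for the bi-Laplacian,
$$\Delta^2(fg)=(\Delta^2 f)g+4\nabla(\Delta f)\!\cdot\!\nabla g+6(\Delta f)(\Delta g)+4\nabla f\!\cdot\!\nabla(\Delta g)+f\,\Delta^2 g,$$
and use $\Delta^2 B\equiv 0$ in $D^c$ to kill the first term. The remaining four cross terms are exactly those estimated in Lemma \ref{samet.lemma.normal.n=2}; each is dominated by $C R^{-2}\ln R\cdot\xi^{\ell-4}(|x|/R)$ on the annulus $R<|x|<2R$ (the $\nabla(\Delta B)\!\cdot\!\nabla\xi^\ell$ term has no logarithmic factor, but is absorbed into the others). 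Outside this annulus the derivatives of $\xi^\ell$ vanish, so $\Delta^2\phi\equiv 0$ there.

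Raising to the power $p'$ and combining with $\phi^{-1/(p-1)}=B^{-1/(p-1)}\xi^{-\ell/(p-1)}(|x|/R)$ gives
$$\phi^{-\frac{1}{p-1}}|\Delta^2\phi|^{p'}\;\le\; C R^{-2p'}(\ln R)^{p'}\,B(x)^{-\frac{1}{p-1}}\,\xi^{\ell-4p'}(|x|/R) \qquad\text{on } R<|x|<2R.$$
The standing assumption $\ell\ge 4p/(p-1)=4p'$ makes the $\xi$-exponent nonnegative, so $\xi^{\ell-4p'}\le 1$. Using the crude lower bound $B(x)\ge c>0$ on $R<|x|<2R$ for $R$ large (which holds since $B$ is strictly positive and monotone in $|x|>1$) together with $\mathrm{vol}(R<|x|<2R)\le CR^{2}$ in dimension two, the spatial integral is controlled by
$$C R^{-2p'}(\ln R)^{p'}\cdot R^{2}\;=\;CR^{-\frac{2}{p-1}}(\ln R)^{\frac{p}{p-1}},$$
where I used $2-2p'=-2/(p-1)$ and $p'=p/(p-1)$. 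Multiplying by the time contribution $CT$ yields the stated estimate.

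The main technical point is the orderly application of the Leibniz expansion and the matching of each of the four surviving terms to its counterpart in Lemma \ref{samet.lemma.normal.n=2}; once the worst-case $\xi^{\ell-4}$ dependence is extracted, the choice $\ell\ge 4p/(p-1)$ absorbs the singular factor $\xi^{-\ell/(p-1)}$ produced by $\phi^{-1/(p-1)}$, after which the computation is essentially algebraic.
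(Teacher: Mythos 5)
Your proposal is correct and follows essentially the same route as the paper: factor out the time integral via \eqref{zeta.estimate}, use the expansion \eqref{bxi} with $\Delta^2 B=0$ and Lemma \ref{samet.lemma.normal.n=2} to get $|\Delta^2\phi|\le CR^{-2}\ln R\,\xi^{\ell-4}$ on the annulus, and then absorb $\xi^{-\ell/(p-1)}$ using $\ell\ge 4p'$ before integrating over $R<|x|<2R$. The exponent bookkeeping ($2-2p'=-2/(p-1)$, $\eta^{p'-1/(p-1)}=\eta$) matches the paper's \eqref{aldingi}, and your explicit remark that only the crude lower bound $B\ge c$ is needed is exactly what the paper uses implicitly.
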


\begin{proof}  From test functions can be obtained following 
\begin{equation*}
\begin{aligned}
\int_Q   \varphi_2^{-\frac{1}{p-1}}(x,t)\left|\Delta^2 \varphi_2(x,t)\right|^{p'}\mathrm{d}x \mathrm{d}t=\left(\int_0^T\eta(t) \mathrm{d}t \right)\left(\int_{D^c} \phi^{-\frac{1}{p-1}}(x)\left|\Delta^2\phi(x)\right|^{p^{\prime}}\mathrm{d}x \right).
\end{aligned}    
\end{equation*}

\noindent Moreover, by \eqref{bxi} and lemma \ref{samet.lemma.normal.n=2} it is straightforward to show
\begin{equation}\label{n=2.phi}
    \begin{aligned}
\left|\Delta^2\phi(x)\right| & \leq C\left( R^{-2}\ln{R} +R^{-2}\ln{R} +R^{-2}+ R^{-2}\ln{R}\right) \xi^{\ell-4}\left(\frac{|x|}{R}\right) \\
& \leq  C R^{-2}\ln{R}\xi^{\ell-4}\left(\frac{|x|}{R}\right), \quad R<|x|<2 R .
\end{aligned}
\end{equation}
In addition, by \eqref{n=2.phi} and lemma \ref{H(x.estimate).N2.normal} mentioned earlier, we obtain
\begin{equation}\label{aldingi}
\begin{aligned}
 \int_{D^c} \phi^{-\frac{1}{p-1}}(x)\left|\Delta^2 \phi(x)\right|^{p^{\prime}}\mathrm{d}x&=\int_{R<|x|<2R}\left[B(x) \xi^l\left(\frac{|x|}{R}\right)\right]^{-\frac{1}{p-1}} \left|\Delta^2 \phi(x)\right|^{p^{\prime}}\mathrm{d}x \\
&\leq  CR^{-\frac{2p}{p-1}} (\ln{R})^{\frac{p}{p-1}}  \int_{R<|x|<2 R} \xi^{l-4 p^{\prime}}\left(\frac{|x|}{R}\right) \mathrm{d}x\\
&\stackrel{|x|=r}{\leq} C R^{-\frac{2p}{p-1}} (\ln{R})^{\frac{p}{p-1}}    \int_{R}^{2R}r \mathrm{d}r\\
&\leq    C R^{-\frac{2}{p-1}} (\ln{R})^{\frac{p}{p-1}} .
\end{aligned}
\end{equation}

\noindent To obtain the expected outcome, it is necessary to merge  \eqref{zeta.estimate} and \eqref{aldingi}.
\end{proof}

\subsection{The case when $\boldsymbol{N=4}$}
\begin{Lem}\label{H(x.estimate).N4.normal}
     For $R<|x|<2 R$ the following estimates are valid
     \begin{equation*}
         \begin{aligned}
             B(x) &\leq C \ln R,\\
             |\nabla B(x)| &\leq C R^{-1},\\
             |\Delta B(x)|&\leq CR^{-2},
         \end{aligned}
     \end{equation*}
 and
$$|\nabla\cdot\Delta B(x)|\leq CR^{-3}.$$
\end{Lem}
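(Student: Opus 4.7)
The plan is to follow the exact same template as Lemma \ref{H(x.estimate).N2.normal}, just substituting in the $N=4$ expression $B(x)=2\ln|x|+|x|^{-2}-1$ from definition \eqref{H(x)}. Since $B$ is radial, every derivative estimate reduces to plugging $r=|x|$ into $R<r<2R$ and bounding powers of $r$, so the four inequalities should drop out of essentially one page of elementary calculus.

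First I would treat $B(x)$ itself: on $R<|x|<2R$ we have $2\ln|x|\le 2\ln(2R)\le C\ln R$ while $|x|^{-2}-1$ is uniformly bounded (it even goes to zero), giving $B(x)\le C\ln R$. Next for the gradient, direct differentiation yields $\nabla B(x)=\frac{2x}{|x|^2}-\frac{2x}{|x|^4}$, whence $|\nabla B(x)|\le \frac{2}{|x|}+\frac{2}{|x|^3}\le CR^{-1}$ on the annulus, since the $R^{-3}$ term is dominated by $R^{-1}$ for large $R$.

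For the Laplacian I would switch to the radial representation $h(r)=2\ln r+r^{-2}-1$ and use $\Delta B(x)=h''(r)+\frac{N-1}{r}h'(r)=h''(r)+\frac{3}{r}h'(r)$. A brief computation gives $h'(r)=2r^{-1}-2r^{-3}$ and $h''(r)=-2r^{-2}+6r^{-4}$, so the $r^{-4}$ terms cancel and the remaining $r^{-2}$ contributions combine to $\Delta B(x)=4|x|^{-2}$. Hence $|\Delta B(x)|\le CR^{-2}$. Finally $\nabla\cdot\Delta B(x)=\nabla(4|x|^{-2})=-8\,x/|x|^4$, giving $|\nabla\cdot\Delta B(x)|\le 8|x|^{-3}\le CR^{-3}$.

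There is no real obstacle here; the only subtlety is making sure the $\ln|x|$ and $|x|^{-2}$ contributions are correctly balanced so that $\Delta B$ actually decays like $|x|^{-2}$ (rather than inheriting a $\ln|x|$ factor, which would have been the case had the biharmonic corrector been chosen differently), but this is guaranteed by the very definition of $B(x)$ in \eqref{H(x)}, which is tailored so that $\Delta^2 B=0$ and $\Delta B$ has the clean form $4|x|^{-2}$. The structure of the proof is therefore identical to Lemma \ref{H(x.estimate).N2.normal}, with the $N=2$ logarithm being replaced by a purely polynomial decay in all but the bound on $B$ itself.
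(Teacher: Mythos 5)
Your proposal is correct and follows exactly the same route as the paper: bound $2\ln|x|$ by $C\ln R$ on the annulus, differentiate directly to get $\nabla B(x)=2x|x|^{-2}(1-|x|^{-2})$, compute $\Delta B$ via the radial formula $h''(r)+\tfrac{3}{r}h'(r)=4r^{-2}$, and then $\nabla(\Delta B)=-8x/|x|^{4}$. All four bounds and the intermediate computations match the paper's proof.
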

\begin{proof}
By \eqref{H(x)}, we arrive at
    $$B(x)= 2\ln{|x|} +|x|^{-2}- 1 \leq C \ln R, $$
 taking its derivative results in
$$\nabla B(x)=2x|x|^{-2}(1-|x|^{-2}), \quad |\nabla B(x)| \leq CR^{-1}.$$
Now, define the function  
    $$
h(r)=2 \ln r+r^{-2}-1,
$$
and differentiate to obtain 
$$
h^{\prime}(r)=2 r^{-1}(1-r^{-2}),$$ with the second derivative as $$h^{\prime \prime}(r)=6r^{-4}-2r^{-2},$$
which implies
$$
\begin{aligned}
\Delta B(x)& =h^{\prime \prime}(r)+\frac{3}{r} h^{\prime}(r)  =4 r^{-2}, \quad |\Delta B(x)|\leq C R^{-2}, \quad r=|x| .
\end{aligned}
$$
Then we have
$$\nabla\cdot\Delta B(x)=-\frac{8x}{|x|^4}, \quad |\nabla\cdot\Delta B(x)|\leq CR^{-3},$$
and that completes the proof.
\end{proof}

\begin{Lem}\label{estimate.t.normal.n4} 
    For $R>0$ and $T>0$ the following estimate holds:
    \begin{equation*}
         \int_Q \varphi_2^{-\frac{1}{p-1}}(x,t)\left|\partial_t \varphi_2(x,t)\right|^{p^{\prime}} \mathrm{d}x \mathrm{d}t\leq C T^{1-p^{\prime}}R^{4}\ln{R}.
    \end{equation*}
   
\end{Lem}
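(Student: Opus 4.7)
The plan is to mirror the strategy used for Lemma \ref{estimate.t.normal} in the $N=2$ case, adapting only the spatial bounds to reflect the $N=4$ asymptotics of the biharmonic auxiliary function $B(x)$ given by Lemma \ref{H(x.estimate).N4.normal}. First I would exploit the product structure $\varphi_2(t,x)=\eta(t)\phi(x)$ from \eqref{psi1} to factor the integrand as
\begin{equation*}
\varphi_2^{-\frac{1}{p-1}}(x,t)\left|\partial_t\varphi_2(x,t)\right|^{p'}=\phi(x)\,\eta^{-\frac{1}{p-1}}(t)\left|\partial_t\eta(t)\right|^{p'},
\end{equation*}
so that the iterated integral separates into a spatial factor $\int_{D^c}\phi(x)\,dx$ times a temporal factor $\int_0^T\eta^{-\frac{1}{p-1}}(t)|\partial_t\eta(t)|^{p'}\,dt$.

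For the temporal factor, the argument is identical to the one carried out in \eqref{N2.derivetavizeta.estimate}: the cut-off $\zeta$ and the exponent $\ell\geq\frac{4p}{p-1}$ ensure that after pulling out the two factors of $T$ coming from the chain rule and the change of variable $s=t/T$, one obtains the bound $CT^{1-p'}$. This part is completely insensitive to the spatial dimension, so it can simply be quoted.

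For the spatial factor, I would use $\phi(x)=B(x)\xi^{\ell}(|x|/R)$ together with the first estimate of Lemma \ref{H(x.estimate).N4.normal}, namely $B(x)\leq C\ln R$ on the annulus $R<|x|<2R$ (and trivially on $1<|x|\leq R$ after enlarging $C$ if necessary, since $B$ is bounded by $C\ln R$ throughout the support). This gives
\begin{equation*}
\int_{D^c}\phi(x)\,dx\leq C\ln R\int_{1<|x|<2R}\xi^{\ell}\!\left(\tfrac{|x|}{R}\right)dx\leq C\ln R\int_{1}^{2R}r^{N-1}\,dr\leq CR^{4}\ln R,
\end{equation*}
where the last inequality uses $N=4$ so that $r^{N-1}=r^{3}$. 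Combining the temporal and spatial bounds produces the stated estimate $CT^{1-p'}R^{4}\ln R$.

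I do not anticipate any genuine obstacle here, since the only structural difference between the $N=2$ and $N=4$ cases is that the pointwise bound on $B(x)$ shrinks from $CR^{2}\ln R$ down to $C\ln R$, while the volume element grows from $r\,dr$ to $r^{3}\,dr$; these two changes conspire to reproduce the same final exponent $R^{4}\ln R$. The only point that deserves minor care is verifying that the cut-off $\xi^{\ell}$ is supported in $|x|\leq 2R$ so that the annular estimate of Lemma \ref{H(x.estimate).N4.normal} applies throughout the support of $\phi$; this is immediate from \eqref{xi.s}.
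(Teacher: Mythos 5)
Your proposal is correct and follows essentially the same route as the paper: factor $\varphi_2=\eta\phi$, quote the temporal bound \eqref{N2.derivetavizeta.estimate} for the factor $CT^{1-p'}$, and bound the spatial factor by $C\ln R\int_1^{2R}r^3\,dr\leq CR^4\ln R$ using Lemma \ref{H(x.estimate).N4.normal}. Your remark that the pointwise bound $B(x)\leq C\ln R$ extends to the whole support $1<|x|<2R$ is a sensible clarification of a point the paper leaves implicit, but it does not change the argument.
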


\begin{proof}
Drawing upon the features of the test functions  \eqref{psi1}, we arrive at

    $$
\begin{aligned}
 \int_Q  \varphi_2^{-\frac{1}{p-1}}(x,t)\left|\partial_t \varphi_2(x,t)\right|^{p^{\prime}}\mathrm{d}x \mathrm{d}t =\left(\int_{D^c} \phi(x)\mathrm{d}x \right)\left(\int_0^T \eta^{-\frac{1}{p-1}}(t)\big|\partial_t\eta(t)\big|^{p^{\prime}} \mathrm{d}t\right).
\end{aligned}
$$

\noindent For sufficiently large $R$ and for $l> \frac{4p}{p-1}$, we obtain  
\begin{equation}\label{xi.integral.N2.normal.n4}
    \begin{aligned}
        \int_{D^c} \phi(x)\mathrm{d}x&=\int_{1<|x|<2R} B(x) \xi^l\left(\frac{|x|}{R}\right)\mathrm{d}x\\
        & \leq C\ln{R}\int_{1<|x|<2R} \xi^l\left(\frac{|x|}{R}\right)\mathrm{d}x\\
        & \stackrel{|x|=r}{\leq} C\ln{R}\int_1^{2R}  r^{3} \mathrm{d}r\\
        &\leq CR^{4}\ln{R}.
    \end{aligned}
\end{equation}
\noindent  The integral with the cut-off function is provided in \eqref{N2.derivetavizeta.estimate}, combining estimates we complete the proof.
\end{proof}
The estimate for the test function has already been demonstrated in lemma \ref{samet.lemma}, and the estimate for the biharmonic function has been shown in lemma \ref{H(x.estimate).N4.normal}. Thus, combining the results of these lemmas is sufficient to establish the following lemma.

\begin{Lem}\label{samet.lemma.normal.n=4}
    For $R<|x|<2 R$, the following estimates hold:
$$\left|B(x) \Delta^2 \xi^{\ell}\left(\frac{|x|}{R}\right)\right|  \leq C R^{-4}\ln{R}\xi^{\ell-4}\left(\frac{|x|}{R}\right),$$ 
$$\left|\nabla B(x) \cdot \nabla\left(\Delta \xi^{\ell}\left(\frac{|x|}{R}\right)\right)\right|  \leq C R^{-4}\xi^{\ell-4}\left(\frac{|x|}{R}\right),$$ 
$$\left|\Delta B(x) \Delta \xi^{\ell}\left(\frac{|x|}{R}\right)\right| \leq C R^{-4}\xi^{\ell-4}\left(\frac{|x|}{R}\right),$$ and
$$\left|\nabla(\Delta B(x)) \cdot \nabla \xi^{\ell}\left(\frac{|x|}{R}\right)\right|  \leq CR^{-4}\xi^{\ell-4}\left(\frac{|x|}{R}\right),$$
where $\cdot$ denotes the inner product in $\mathbb{R}^N$.
\end{Lem}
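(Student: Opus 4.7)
The plan is to derive each of the four estimates by multiplying, termwise, the pointwise bound on a derivative of $B(x)$ coming from Lemma~\ref{H(x.estimate).N4.normal} by the matching pointwise bound on a derivative of $\xi^{\ell}(|x|/R)$, and then absorbing any remaining factor $\xi^{j}$ with $j>\ell-4$ into $\xi^{\ell-4}$ via $0\le\xi\le 1$. Because the four products all have total differential order four, the powers of $R^{-1}$ contributed by the two factors will always sum to $R^{-4}$, which is the uniform $R$-rate claimed in the lemma.

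First, I would record two auxiliary bounds that the paper uses implicitly but does not state separately: on $R<|x|<2R$, the chain rule applied to $\xi^{\ell}(|x|/R)$, together with the uniform bounds on $\xi'$ and $\xi''$ required after \eqref{xi.s}, yields $|\nabla\xi^{\ell}(|x|/R)|\le CR^{-1}\xi^{\ell-1}(|x|/R)$ and $|\Delta\xi^{\ell}(|x|/R)|\le CR^{-2}\xi^{\ell-2}(|x|/R)$. Together with Lemma~\ref{samet.lemma}, this gives the full scale ladder $R^{-k}$ with companion $\xi^{\ell-k}$ for $k=1,2,3,4$ on the successive derivatives of $\xi^{\ell}$.

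Next, I would multiply these against the $N=4$ bounds from Lemma~\ref{H(x.estimate).N4.normal}. For estimate (i), $B(x)\le C\ln R$ paired with $|\Delta^{2}\xi^{\ell}|\le CR^{-4}\xi^{\ell-4}$ gives $CR^{-4}\ln R\,\xi^{\ell-4}$, which explains why only this first term retains a logarithm. For the remaining three, the bounds $|\nabla B|\le CR^{-1}$, $|\Delta B|\le CR^{-2}$, and $|\nabla\cdot\Delta B|\le CR^{-3}$ are each matched with the complementary derivative of $\xi^{\ell}$ at $R^{-3}$, $R^{-2}$, and $R^{-1}$, respectively, so the $R$-powers telescope to $R^{-4}$ with no logarithm. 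The excess $\xi$-powers $\xi^{\ell-1}$ and $\xi^{\ell-2}$ produced in cases (iii) and (iv) are then absorbed into $\xi^{\ell-4}$ using $\xi\le 1$; this is legitimate because the standing assumption $\ell\ge 4p/(p-1)>4$ guarantees $\ell-4\ge 0$.

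There is no genuine obstacle here: the statement is simply the dimension-four analogue of Lemma~\ref{samet.lemma.normal.n=2}, and the only substantive difference from the $N=2$ case is that for $N=4$ the bounds on $|\nabla B|$, $|\Delta B|$, and $|\nabla\cdot\Delta B|$ are free of $\ln R$, which is precisely what produces the common uniform rate $R^{-4}$, rather than $R^{-4}\ln R$, in the three product terms that do not involve $B$ itself. The only thing to watch is the bookkeeping of $\xi$-exponents, so that every term is indeed controlled by the single factor $\xi^{\ell-4}(|x|/R)$ that the subsequent $L^{p'}$-integration estimate for $\varphi_{2}$ in dimension four will need.
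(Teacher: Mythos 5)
Your proposal is correct and is exactly the argument the paper intends: the paper gives no written proof of this lemma, remarking only that combining Lemma~\ref{samet.lemma} with Lemma~\ref{H(x.estimate).N4.normal} suffices, and your termwise pairing of the $N=4$ bounds on $B$, $\nabla B$, $\Delta B$, $\nabla(\Delta B)$ with the complementary derivatives of $\xi^{\ell}$ (supplemented by the unstated first- and second-order bounds $|\nabla\xi^{\ell}|\le CR^{-1}\xi^{\ell-1}$, $|\Delta\xi^{\ell}|\le CR^{-2}\xi^{\ell-2}$, and absorption of excess $\xi$-powers using $0\le\xi\le1$ and $\ell>4$) is precisely that combination, carried out correctly.
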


\begin{Lem}\label{estimate.b.normal.n=4}
    The following integral can be evaluated as
    \begin{equation*}
        \int_Q  \varphi_2^{-\frac{1}{p-1}}(x,t)\left|\Delta^2 \varphi_2(x,t)\right|^{p'} \mathrm{d}x \mathrm{d}t \leq CT^1 R^{-\frac{4}{p-1}} (\ln{R})^{\frac{p}{p-1}} .
    \end{equation*}   
\end{Lem}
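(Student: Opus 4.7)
The plan is to mirror the strategy of Lemma \ref{estimate.b.normal}, adapted to the $N=4$ biharmonic reference function. Because $\varphi_2(t,x)=\eta(t)\phi(x)$ splits as a tensor product, one first factors
$$
\int_Q \varphi_2^{-\frac{1}{p-1}}|\Delta^2\varphi_2|^{p'}\,dx\,dt=\left(\int_0^T \eta(t)\,dt\right)\left(\int_{D^c}\phi^{-\frac{1}{p-1}}(x)|\Delta^2\phi(x)|^{p'}\,dx\right),
$$
using the algebraic identity $p'-\tfrac{1}{p-1}=1$ so that the time factor collapses to $\int_0^T \eta\,dt\leq CT$ via \eqref{zeta.estimate}. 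The entire task is then to bound the space integral by $CR^{-4/(p-1)}(\ln R)^{p/(p-1)}$.

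For the space integral I would invoke the Leibniz expansion \eqref{bxi}, noting that the leading term $\xi^\ell\Delta^2 B$ vanishes because $B$ is biharmonic. The remaining four pieces are estimated by Lemma \ref{samet.lemma.normal.n=4}: three of them are bounded by $CR^{-4}\xi^{\ell-4}(|x|/R)$ and the term $B\,\Delta^2\xi^\ell$ contributes the dominant factor $CR^{-4}\ln R\,\xi^{\ell-4}(|x|/R)$, coming from the logarithmic growth of $B$ given in Lemma \ref{H(x.estimate).N4.normal}. Combining yields
$$
|\Delta^2\phi(x)|\leq CR^{-4}\ln R\,\xi^{\ell-4}\!\left(\tfrac{|x|}{R}\right),\qquad R<|x|<2R.
$$

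Next I raise this to the $p'$-th power and multiply by $\phi^{-1/(p-1)}=B^{-1/(p-1)}\xi^{-\ell/(p-1)}(|x|/R)$. The identity $(\ell-4)p'-\ell/(p-1)=\ell-4p'$ keeps the $\xi$ exponent integrable as long as $\ell>4p'$, which is guaranteed by our standing assumption $\ell\geq 4p/(p-1)$. On the annulus $R<|x|<2R$ with $R$ large, the biharmonic function satisfies $B(x)\geq c>0$, so $B^{-1/(p-1)}\leq C$ and no additional logarithm is introduced. Passing to polar coordinates in dimension $N=4$ gives
$$
\int_{R<|x|<2R}\xi^{\ell-4p'}\!\left(\tfrac{|x|}{R}\right) dx\leq C\int_R^{2R}r^{3}\,dr\leq CR^{4},
$$
and gathering the powers yields $CR^{4-4p'}(\ln R)^{p'}=CR^{-4/(p-1)}(\ln R)^{p/(p-1)}$. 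Multiplying by the $CT$ from the time integral closes the estimate.

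The main obstacle is essentially bookkeeping: verifying that the logarithmic growth of $B$ and $\Delta B$ (as in the $N=2$ case) is confined to the term $B\,\Delta^2\xi^\ell$, so that only a single $\ln R$ propagates into $|\Delta^2\phi|$ and hence $(\ln R)^{p'}$ into the final bound. There is no genuinely new analytic difficulty compared to Lemma \ref{estimate.b.normal}; the only care needed is to use the sharp pointwise estimates of Lemma \ref{H(x.estimate).N4.normal} specific to $N=4$ rather than those for $N=2$.
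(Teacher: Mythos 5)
Your proposal is correct and follows essentially the same route as the paper: the tensor-product factorization with $p'-\tfrac{1}{p-1}=1$, the expansion \eqref{bxi} with $\Delta^2 B=0$, the pointwise bound $|\Delta^2\phi|\leq CR^{-4}\ln R\,\xi^{\ell-4}(|x|/R)$ from Lemma \ref{samet.lemma.normal.n=4}, and the polar-coordinate integration over $R<|x|<2R$ in dimension $4$. Your explicit remarks on the exponent identity $(\ell-4)p'-\ell/(p-1)=\ell-4p'$ and on the lower bound $B\geq c>0$ on the annulus only make explicit what the paper leaves implicit.
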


\begin{proof}  From test functions can be obtained following 
\begin{equation*}
\begin{aligned}
\int_Q   \varphi_2^{-\frac{1}{p-1}}(x,t)\left|\Delta^2 \varphi_2(x,t)\right|^{p'}\mathrm{d}x \mathrm{d}t=\left(\int_0^T\eta(t) \mathrm{d}t \right)\left(\int_{D^c} \phi^{-\frac{1}{p-1}}(x)\left|\Delta^2\phi(x)\right|^{p^{\prime}}\mathrm{d}x \right).
\end{aligned}    
\end{equation*}

\noindent Moreover, by \eqref{bxi} and lemma \ref{samet.lemma.normal.n=4} it is straightforward to show
\begin{equation}\label{n=4.phi1}
    \begin{aligned}
\left|\Delta^2\phi(x)\right| & \leq C\left( R^{-4}\ln{R} +R^{-4} +R^{-4}+ R^{-4}\right) \xi^{\ell-4}\left(\frac{|x|}{R}\right) \\
& \leq  C R^{-4}\ln{R}\xi^{\ell-4}\left(\frac{|x|}{R}\right), \quad R<|x|<2 R .
\end{aligned}
\end{equation}
In addition, by \eqref{n=4.phi1} and lemma \ref{H(x.estimate).N4.normal} mentioned earlier, we obtain
\begin{equation}\label{aldingi.N=4}
\begin{aligned}
 \int_{D^c} \phi^{-\frac{1}{p-1}}(x)\left|\Delta^2 \phi(x)\right|^{p^{\prime}}\mathrm{d}x&=\int_{R<|x|<2 R}\left[B(x) \xi^l\left(\frac{|x|}{R}\right)\right]^{-\frac{1}{p-1}} \left|\Delta^2 \phi(x)\right|^{p^{\prime}}\mathrm{d}x \\
&\leq  CR^{-\frac{4p}{p-1}} (\ln{R})^{\frac{p}{p-1}}  \int_{R<|x|<2 R} \xi^{l-4 p^{\prime}}\left(\frac{|x|}{R}\right) \mathrm{d}x\\
&\stackrel{|x|=r}{\leq} C R^{-\frac{4p}{p-1}} (\ln{R})^{\frac{p}{p-1}}    \int_R^{2R}r^3 \mathrm{d}r\\
&\leq    C R^{-\frac{4}{p-1}} (\ln{R})^{\frac{p}{p-1}} .
\end{aligned}
\end{equation}

\noindent To obtain the expected outcome, it is necessary to merge  \eqref{zeta.estimate} and \eqref{aldingi.N=4}.
\end{proof}

\subsection{The case when $\boldsymbol{N=3}$ and $\boldsymbol{N\geq 5}$}
\begin{Lem}\label{H(x.estimate).N3.normal}
     For $R<|x|<2 R$, we have
$$ |{}B(x)| \leq C ,$$ $$|\nabla B(x)| \leq CR^{3-N},$$ $$|\Delta B(x)|\leq CR^{2-N},
$$ and
$$|\nabla\cdot\Delta B(x)|\leq CR^{1-N}.$$
\end{Lem}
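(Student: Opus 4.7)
The plan is a direct computation from the explicit formulas in \eqref{H(x)}, splitting the argument into the two sub-cases $N=3$ and $N\geq 5$ and estimating each term on the annulus $\{R<|x|<2R\}$. The two standard identities I would use throughout are
\[
\nabla |x|^{\alpha}=\alpha |x|^{\alpha-2}x,\qquad
\Delta |x|^{\alpha}=\alpha(\alpha+N-2)|x|^{\alpha-2},
\]
valid in $\R^N\setminus\{0\}$.

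First, the case $N\geq 5$. Writing $B(x)=|x|^{2-N}-1+\tfrac{N-2}{N-4}\bigl(1-|x|^{4-N}\bigr)$, observe that $|x|^{2-N}$ is harmonic (indeed $\alpha(\alpha+N-2)=(2-N)\cdot 0=0$), so it contributes only to $B$ and $\nabla B$, not to $\Delta B$ or $\nabla\Delta B$. Using the two identities above I get
\[
\nabla B(x)=(2-N)|x|^{-N}x+(N-2)|x|^{2-N}x,
\qquad
\Delta B(x)=2(N-2)|x|^{2-N},
\]
and differentiating once more
\[
\nabla\Delta B(x)=2(N-2)(2-N)|x|^{-N}x.
\]
Since the exponents $2-N$ and $4-N$ are negative and $|x|>R>1$, every summand in $B$ is bounded by a constant depending only on $N$; similarly the dominant term of $\nabla B$ on the annulus is $(N-2)|x|^{3-N}\lesssim R^{3-N}$ (the other term is of lower order $|x|^{1-N}$), while $|\Delta B|\lesssim R^{2-N}$ and $|\nabla\Delta B|\lesssim R^{1-N}$ are immediate.

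Second, the case $N=3$. Here $B(x)=|x|^{-1}+|x|-2$. Using the same identities,
\[
\nabla B(x)=-|x|^{-3}x+|x|^{-1}x,\qquad
\Delta B(x)=2|x|^{-1},\qquad
\nabla\Delta B(x)=-2|x|^{-3}x,
\]
because $|x|^{-1}$ is the $3$-dimensional fundamental solution and is harmonic. On $\{R<|x|<2R\}$, $|\nabla B|\leq |x|^{-2}+1\leq C=CR^{3-N}$, $|\Delta B|\leq 2|x|^{-1}\leq CR^{-1}=CR^{2-N}$, and $|\nabla\Delta B|\leq 2|x|^{-2}\leq CR^{-2}=CR^{1-N}$. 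The bound on $|B|$ itself follows by observing that all nontrivial contributions are uniformly controlled on the annulus in exactly the same way as in Lemmas \ref{H(x.estimate).N2.normal} and \ref{H(x.estimate).N4.normal}.

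There is no real technical obstacle: the proof is a routine computation, completely analogous to Lemma \ref{H(x.estimate).N2.normal} and Lemma \ref{H(x.estimate).N4.normal}, and the only point that requires a little care is the harmonicity of $|x|^{2-N}$ when $N\geq 5$ (respectively $|x|^{-1}$ when $N=3$), which is what kills the leading potentially large term in $\Delta B$ and keeps the exponents $2-N$ and $1-N$ in the claimed bounds sharp.
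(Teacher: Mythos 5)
Your computation is essentially the paper's proof in different notation: the paper writes $B$ radially as $h(r)$ and uses $\Delta=\partial_r^2+\tfrac{N-1}{r}\partial_r$, while you use the identities $\nabla|x|^{\alpha}=\alpha|x|^{\alpha-2}x$ and $\Delta|x|^{\alpha}=\alpha(\alpha+N-2)|x|^{\alpha-2}$; the resulting formulas for $\nabla B$, $\Delta B=2(N-2)|x|^{2-N}$ and $\nabla\Delta B$ coincide, and your observation that harmonicity of $|x|^{2-N}$ kills the leading term of $\Delta B$ is exactly what the paper's cancellation $h''+\tfrac{N-1}{r}h'$ encodes. For $N\geq 5$ everything you write is correct and complete.

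One genuine problem in the $N=3$ case: your claim that the bound $|B(x)|\leq C$ follows because ``all nontrivial contributions are uniformly controlled on the annulus'' is false. For $N=3$ one has $B(x)=|x|^{-1}+|x|-2$, and the summand $|x|$ is of order $R$ on $\{R<|x|<2R\}$, so $|B(x)|$ is of order $R=R^{4-N}$ there, not $O(1)$. (Note that the $N\geq 5$ formula specializes to the $N=3$ one via $\tfrac{N-2}{N-4}=-1$, and the obstruction is precisely that $|x|^{4-N}$ is no longer bounded when $N=3$.) This defect is inherited from the statement itself — the paper's own proof manipulates only the general formula and silently assumes $|x|^{4-N}\leq 1$, which requires $N\geq 5$ — but since you singled out $N=3$ for separate treatment, you should not assert the uniform bound there; the other three estimates ($|\nabla B|\leq C$, $|\Delta B|\leq CR^{-1}$, $|\nabla\Delta B|\leq CR^{-2}$) do hold for $N=3$ exactly as you computed.
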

\begin{proof} By \eqref{H(x)} it obvious that
\begin{equation*}
    \begin{aligned}
        B(x)&= |x|^{2-N}-1+\frac{N-2}{N-4}\left(1-|x|^{4-N}\right)\\&=-\big(1-|x|^{2-N}\big)+\frac{N-2}{N-4}\left(1-|x|^{4-N}\right),
    \end{aligned}
\end{equation*}
therefore, we have
$$|B(x)|\leq C,$$
and

$$\nabla B(x)=x(N-2)|x|^{2-N}\left(1-|x|^{-2}\right), \quad |\nabla B(x)| \leq CR^{3-N}.$$
     Let
$$
h(r)=r^{2-N}-1+\frac{N-2}{N-4}\left(1-r^{4-N}\right),
$$
upon differentiation, we have
\begin{equation*}
\begin{aligned}
     h^{\prime}(r)&=(2-N) r^{1-N}+(N-2)r^{3-N},\\
     h^{\prime\prime}(r)&=(2-N)(1-N) r^{-N}+(N-2)(3-N)r^{2-N},
\end{aligned}
\end{equation*}
as a result
\begin{align*}
\Delta B(x)&=h^{\prime \prime}(r)+\frac{N-1}{r} h^{\prime}(r)\\&=
2(N-2)r^{2-N}\\&\leq CR^{2-N},\end{align*}
then
$$\nabla\cdot\Delta B(x)=2(N-2)(2-N)\frac{x}{|x|^{N}}, \quad |\nabla\cdot\Delta B(x)|\leq CR^{1-N},$$
which completes the proof.
\end{proof}

\begin{Lem}\label{estimate.t.normal.cr}
For $R>0$ and $T>0$ the following estimate holds:
    \begin{equation*}
        \int_Q \varphi_2^{-\frac{1}{p-1}}(x,t)\left|\partial_t \varphi_2(x,t)\right|^{p^{\prime}} \mathrm{d}x \mathrm{d}t\leq C T^{1-p^{\prime}}R^{N}.
    \end{equation*}

\end{Lem}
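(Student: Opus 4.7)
The plan is to mirror the proofs of Lemma \ref{estimate.t.normal} and Lemma \ref{estimate.t.normal.n4}, exploiting the separable structure of the test function $\varphi_2(t,x)=\eta(t)\phi(x)$ to reduce the spacetime integral to a product of a one-variable time integral and a spatial integral, each of which can be handled by tools already developed.

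First, I would note that because $\varphi_2^{-1/(p-1)}|\partial_t\varphi_2|^{p'}=\phi(x)\,\eta^{-1/(p-1)}(t)|\partial_t\eta(t)|^{p'}$ (the spatial factor cancels nicely since only $\eta$ depends on $t$), we get
\begin{equation*}
\int_Q \varphi_2^{-\frac{1}{p-1}}|\partial_t\varphi_2|^{p'}\,\mathrm{d}x\mathrm{d}t=\left(\int_{D^c}\phi(x)\,\mathrm{d}x\right)\left(\int_0^T\eta^{-\frac{1}{p-1}}(t)|\partial_t\eta(t)|^{p'}\mathrm{d}t\right).
\end{equation*}
The time integral has already been computed in \eqref{N2.derivetavizeta.estimate} and produces $CT^{1-p'}$, independent of the dimension. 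So the whole content of the lemma is the spatial bound $\int_{D^c}\phi(x)\,\mathrm{d}x\leq CR^N$.

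For the spatial estimate, I would invoke Lemma \ref{H(x.estimate).N3.normal}, which gives the crucial pointwise bound $|B(x)|\leq C$ on the annulus $R<|x|<2R$ (this is exactly where the cases $N=3$ and $N\geq 5$ differ from $N=2,4$: no logarithmic factor appears). On the inner annulus $1<|x|\leq R$ we have $\xi(|x|/R)\equiv 1$ and $B(x)$ is uniformly bounded, while on $R<|x|<2R$ we use the above bound on $B$ together with $0\leq \xi\leq 1$. Passing to polar coordinates,
\begin{equation*}
\int_{D^c}\phi(x)\,\mathrm{d}x=\int_{1<|x|<2R}B(x)\xi^\ell\!\left(\tfrac{|x|}{R}\right)\mathrm{d}x\leq C\int_1^{2R}r^{N-1}\,\mathrm{d}r\leq CR^N.
\end{equation*}
Combining this with the time estimate yields the stated bound $CT^{1-p'}R^N$.

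I do not anticipate any real obstacle: the proof is essentially a bookkeeping exercise that repeats the structure of Lemma \ref{estimate.t.normal.n4} with the logarithmic factor removed. The only point that requires some care is making sure the estimate $|B(x)|\leq C$ from Lemma \ref{H(x.estimate).N3.normal} holds uniformly on the whole support $\{1<|x|<2R\}$ of $\phi$, not just on the cut-off annulus $R<|x|<2R$; this follows directly from the explicit formulas in \eqref{H(x)} for $N=3$ and $N\geq 5$, since $B(x)\to (N-2)/(N-4)$ (or to a finite limit for $N=3$) as $|x|\to\infty$ and $B$ is continuous up to $\partial D$.
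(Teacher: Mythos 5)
Your argument is structurally identical to the paper's: separate variables, reuse the time estimate \eqref{N2.derivetavizeta.estimate} to get $CT^{1-p'}$, and bound $\int_{D^c}\phi(x)\,\mathrm{d}x$ by $CR^N$ via the pointwise bound on $B$ and polar coordinates. For $N\geq 5$ this is correct and matches the paper's proof of Lemma \ref{estimate.t.normal.cr} essentially line for line; you are also right that one needs the bound on $B$ over the whole support $1<|x|<2R$ and not merely on the cut-off annulus, a point the paper passes over silently.

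However, your resolution of that point for $N=3$ is wrong: from \eqref{H(x)}, $B(x)=|x|^{-1}+|x|-2$ in dimension three, which does \emph{not} tend to a finite limit as $|x|\to\infty$ — it grows like $|x|$. So the claim $|B(x)|\leq C$ fails for $N=3$ (even on $R<|x|<2R$, where one only gets $|B(x)|\leq CR$), and your spatial integral then comes out as $CR^{N+1}$ rather than $CR^{N}$. To be fair, this defect is inherited from the paper itself: Lemma \ref{H(x.estimate).N3.normal} asserts $|B(x)|\leq C$ for the combined case ``$N=3$ and $N\geq 5$'', but its proof manipulates only the $N\geq 5$ formula $-(1-|x|^{2-N})+\frac{N-2}{N-4}(1-|x|^{4-N})$, whose boundedness relies on $|x|^{4-N}\to 0$, which is exactly what breaks when $N=3$. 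If you want the stated bound for $N=3$ you cannot get it from a uniform bound on $B$; you would need either to accept the weaker exponent $R^{N+1}$ (which still suffices downstream since $T=R^j$ with $j$ free to be taken large) or to restate the lemma for $N\geq 5$ only.
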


\begin{proof}
Employing the attributes of the test functions specified in equation \eqref{psi1}, we can derive
     $$
\begin{aligned}
  \int_Q  \varphi_2^{-\frac{1}{p-1}}(x,t)\left|\partial_t \varphi_2(x,t)\right|^{p^{\prime}} \mathrm{d}x \mathrm{d}t=\left(\int_{D^c} \phi(x)\mathrm{d}x \right)\left(\int_0^T \eta^{-\frac{1}{p-1}}(t)\big|\partial_t\eta(t)\big|^{p^{\prime}} \mathrm{d}t\right).
\end{aligned}
$$

\noindent The integral with cut-off function is the same as in \eqref{N2.derivetavizeta.estimate}. For $R>0$, the following results obtained by using the lemma \ref{H(x.estimate).N3.normal} 
\begin{equation}\label{xi.integral.N2}
    \begin{aligned}
        \int_{D^c} \phi(x)\mathrm{d}x&=\int_{1<|x|<2R} B(x) \xi^l\left(\frac{|x|}{R}\right)\mathrm{d}x\\
        & \leq C\int_{1<|x|<2R} \xi^l\left(\frac{|x|}{R}\right)\mathrm{d}x\\
        & \stackrel{|x|=r}{\leq} C\int_1^{2R}  r^{N-1} \mathrm{d}r\\
        &\leq CR^{N}.
    \end{aligned}
\end{equation}
The combination of \eqref{N2.derivetavizeta.estimate} and \eqref{xi.integral.N2} substantiates the proof.

\end{proof}

 It is straightforward to calculate the following lemma while taking into account lemma \ref{samet.lemma} and lemma \ref{H(x.estimate).N3.normal}.

\begin{Lem}\label{samet.lemma.normal}
    For $R<|x|<2 R$ the following estimates hold:
$$\left|B(x) \Delta^2 \xi^{\ell}\left(\frac{|x|}{R}\right)\right|  \leq C R^{-4}\xi^{\ell-4}\left(\frac{|x|}{R}\right),$$ $$\left|\nabla B(x) \cdot \nabla\left(\Delta \xi^{\ell}\left(\frac{|x|}{R}\right)\right)\right|  \leq C R^{-N}\xi^{\ell-4}\left(\frac{|x|}{R}\right),$$ $$\left|\Delta B(x) \Delta \xi^{\ell}\left(\frac{|x|}{R}\right)\right|  \leq C R^{-N}\xi^{\ell-4}\left(\frac{|x|}{R}\right),$$ and
$$\left|\nabla(\Delta B(x)) \cdot \nabla \xi^{\ell}\left(\frac{|x|}{R}\right)\right|  \leq CR^{-N}\xi^{\ell-4}\left(\frac{|x|}{R}\right),$$
where $\cdot$ denotes the inner product in $\mathbb{R}^N$.
\end{Lem}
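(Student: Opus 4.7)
The plan is to establish each of the four bounds pointwise on the annulus $R<|x|<2R$ by multiplying a bound on the relevant derivative of $B(x)$, supplied by Lemma \ref{H(x.estimate).N3.normal}, against a bound on the corresponding derivative of the cut-off $\xi^{\ell}(|x|/R)$. The estimates $|\nabla(\Delta\xi^{\ell})|\leq C R^{-3}\xi^{\ell-4}$ and $|\Delta^2\xi^{\ell}|\leq C R^{-4}\xi^{\ell-4}$ are already recorded in Lemma \ref{samet.lemma}; for $\nabla\xi^{\ell}$ and $\Delta\xi^{\ell}$ one notes, by the same chain-rule/radial computation, $|\nabla\xi^{\ell}(|x|/R)|\leq C R^{-1}\xi^{\ell-1}(|x|/R)$ and $|\Delta\xi^{\ell}(|x|/R)|\leq C R^{-2}\xi^{\ell-2}(|x|/R)$. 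Since $0\leq\xi\leq1$ and $\ell$ is chosen large enough, these factors $\xi^{\ell-1}$ and $\xi^{\ell-2}$ are dominated by $\xi^{\ell-4}$, so all four cut-off derivatives can be bounded uniformly by $CR^{-k}\xi^{\ell-4}(|x|/R)$ with $k=1,2,3,4$, respectively.

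With those ingredients in place, I would verify the four inequalities one by one. The first follows from $|B(x)|\leq C$ together with $|\Delta^2\xi^{\ell}|\leq CR^{-4}\xi^{\ell-4}$. The second uses $|\nabla B(x)|\leq CR^{3-N}$ from Lemma \ref{H(x.estimate).N3.normal} multiplied by $|\nabla(\Delta\xi^{\ell})|\leq CR^{-3}\xi^{\ell-4}$, giving $CR^{3-N}\cdot R^{-3}=CR^{-N}$. The third combines $|\Delta B(x)|\leq CR^{2-N}$ with $|\Delta\xi^{\ell}|\leq CR^{-2}\xi^{\ell-2}$; after absorbing the $\xi$-power, the exponent is $R^{2-N}\cdot R^{-2}=R^{-N}$. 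The fourth uses $|\nabla(\Delta B(x))|\leq CR^{1-N}$ paired with $|\nabla\xi^{\ell}|\leq CR^{-1}\xi^{\ell-1}$, which yields $R^{1-N}\cdot R^{-1}=R^{-N}$, as required.

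There is essentially no obstacle beyond bookkeeping: the only subtlety is to remember that the bounds on $B(x)$ and its derivatives from Lemma \ref{H(x.estimate).N3.normal} are uniform in $x$ on the annulus $R<|x|<2R$ (i.e.\ they do not depend on the precise value $|x|$ there), so they can be pulled outside any pointwise product; and that, thanks to $0\leq\xi\leq 1$ and $\ell>4p/(p-1)>4$, the various $\xi^{\ell-j}$ factors can all be uniformly absorbed into a single $\xi^{\ell-4}$. No integration or change of variables is needed at this stage, since the statement is purely pointwise.
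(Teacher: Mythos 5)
Your proposal is correct and follows exactly the route the paper intends: the paper gives no written proof for this lemma beyond the remark that it is a straightforward combination of Lemma \ref{samet.lemma} with Lemma \ref{H(x.estimate).N3.normal}, which is precisely your pointwise product of the bounds on $B$, $\nabla B$, $\Delta B$, $\nabla(\Delta B)$ with those on the corresponding derivatives of $\xi^{\ell}$. Your additional observations — the chain-rule bounds $|\nabla \xi^{\ell}|\leq CR^{-1}\xi^{\ell-1}$ and $|\Delta \xi^{\ell}|\leq CR^{-2}\xi^{\ell-2}$, and the absorption of $\xi^{\ell-1},\xi^{\ell-2}$ into $\xi^{\ell-4}$ via $0\leq\xi\leq 1$ — correctly fill in the only details the paper leaves implicit.
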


\begin{Lem}\label{estimate.b.normal.cr}
    The following integral can be evaluated as
    \begin{equation*}
        \int_Q  \varphi_2^{-\frac{1}{p-1}}(x,t)\left|\Delta^2 \varphi_2(x,t)\right|^{p'} \mathrm{d}x \mathrm{d}t \leq CT^1 R^{N-4p'}.
    \end{equation*}   
\end{Lem}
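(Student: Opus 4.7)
The plan is to mirror the template used in Lemmas \ref{estimate.b.normal} and \ref{estimate.b.normal.n=4}: exploit the product structure $\varphi_2(t,x)=\eta(t)\phi(x)$ to factor the integral into a time part and a spatial part, expand $\Delta^2\phi$ via the biharmonic Leibniz formula \eqref{bxi}, eliminate one term using the biharmonicity of $B$, and bound the remaining four terms via Lemma \ref{samet.lemma.normal}.

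Concretely, I would first write
\begin{equation*}
\int_Q \varphi_2^{-\frac{1}{p-1}}|\Delta^2\varphi_2|^{p'}\,dx\,dt = \left(\int_0^T \eta(t)\,dt\right)\left(\int_{D^c}\phi^{-\frac{1}{p-1}}|\Delta^2\phi|^{p'}\,dx\right),
\end{equation*}
where the time integral equals $CT$ by \eqref{zeta.estimate}. For the spatial part, expanding $\Delta^2\phi$ via \eqref{bxi} and invoking $\Delta^2 B=0$ leaves four terms that are controlled, one by one, by Lemma \ref{samet.lemma.normal}. Since that lemma yields bounds of order $R^{-4}$ for the term $B\Delta^2\xi^{\ell}$ and of order $R^{-N}$ for the three others, and since $R^{-4}$ dominates $R^{-N}$ for $N\geq 5$ (with a parallel grouping handling $N=3$), I would conclude
\begin{equation*}
|\Delta^2\phi(x)|\leq CR^{-4}\,\xi^{\ell-4}\!\left(\tfrac{|x|}{R}\right),\qquad R<|x|<2R.
\end{equation*}

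Next I would use a uniform positive lower bound $B(x)\geq c>0$ on the annulus $\{R<|x|<2R\}$ (for $N\geq 5$ this follows from $B(x)\to \frac{2}{N-4}>0$ at infinity, and for $N=3$ from $B(x)\sim |x|\geq R$, which gives an even stronger bound). This yields $\phi^{-1/(p-1)}\leq C\,\xi^{-\ell/(p-1)}(|x|/R)$, so that, after substitution and a change to polar coordinates,
\begin{equation*}
\int_{D^c}\phi^{-\frac{1}{p-1}}|\Delta^2\phi|^{p'}\,dx \leq CR^{-4p'}\int_R^{2R} r^{N-1}\,dr \leq CR^{N-4p'}.
\end{equation*}
Combining with the time bound yields the stated estimate $CTR^{N-4p'}$.

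The principal technical point is reconciling the pointwise upper bound on $|\Delta^2\phi|$ with a lower bound on $\phi$ across the two regimes $N=3$ and $N\geq 5$, where the explicit shape of $B$ is different. Once this is resolved via the auxiliary Lemmas \ref{H(x.estimate).N3.normal} and \ref{samet.lemma.normal}, the rest reduces to a routine polar-coordinate integration, with the power $R^{N-4p'}$ coming from the $N$-dimensional volume factor $r^{N-1}\,dr$ against the $R^{-4p'}$ spatial weight.
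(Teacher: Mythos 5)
Your proposal follows essentially the same route as the paper: factor $\varphi_2=\eta(t)\phi(x)$, evaluate the time integral as $CT$ via \eqref{zeta.estimate}, expand $\Delta^2\phi$ by \eqref{bxi} with $\Delta^2 B=0$, bound the remaining terms by Lemma \ref{samet.lemma.normal} to get $|\Delta^2\phi|\leq CR^{-4}\xi^{\ell-4}$, and integrate in polar coordinates to obtain $R^{N-4p'}$. The only difference is that you make explicit the positive lower bound on $B$ over the annulus needed to absorb the factor $\phi^{-1/(p-1)}$, a step the paper leaves implicit; this is a welcome clarification rather than a departure.
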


\begin{proof}  From test functions can be obtained following 
     $$
\begin{aligned}
\int_Q  \varphi_2^{-\frac{1}{p-1}}(x,t)\left|\Delta^2 \varphi_2(x,t)\right|^{p'}\mathrm{d}x \mathrm{d}t =\left(\int_0^T\eta(t) \mathrm{d}t \right)\left(\int_{D^c} \phi^{-\frac{1}{p-1}}(x)\left|\Delta^2\phi(x)\right|^{p^{\prime}}\mathrm{d}x\right).
\end{aligned}
$$
\noindent The substitution of \eqref{bxi} and lemma \ref{samet.lemma.normal}, we deduce 
$$
\begin{aligned}
\left|\Delta^2\psi(x)\right|  &\leq C\left( R^{-4} + R^{-N} +R^{-N}+ R^{-N}\right) \xi^{\ell-4}\left(\frac{|x|}{R}\right),\\
& \leq C R^{-4} \xi^{\ell-4}\left(\frac{|x|}{R}\right),\quad R<|x|<2 R,
\end{aligned}
$$
which implies that

\begin{equation*}
\begin{aligned}
  \int_{D^c} \phi^{-\frac{1}{p-1}}(x)\left|\Delta^2 \phi(x)\right|^{p^{\prime}}\mathrm{d}x &=\int_{R<|x|<2R}\left[B(x) \xi^ \ell\left(\frac{|x|}{R}\right)\right]^{-\frac{1}{p-1}} \left|\Delta^2 \phi(x)\right|^{p^{\prime}}\mathrm{d}x \\
&\leq  C R^{-4p'} \int_{R<|x|<2 R} \xi^{l-4 p^{\prime}}\left(\frac{|x|}{R}\right) \mathrm{d}x\\
&\stackrel{|x|=r}{\leq} CR^{-4p'}    \int_R^{2R}r^{N-1}\mathrm{d}r\\
&\leq CR^{N-4p'} .
\end{aligned}
\end{equation*}
and from \eqref{zeta.estimate}, we get
 \begin{equation*}
        \int_Q  \varphi_2^{-\frac{1}{p-1}}(x,t)\left|\Delta^2 \varphi_2(x,t)\right|^{p'} \mathrm{d}x \mathrm{d}t \leq CT^1 R^{N-4p'}.
    \end{equation*} 
The proof has been completed.
\end{proof}

\subsection{Test functions with logarithmic arguments}

We define test functions of the form
\begin{equation}\label{psi2}
   \varphi_2(t, x)=\eta(t) \phi(x)=\zeta^{\ell}\left(\frac{t}{T}\right)B(x) \mathcal{F}^{\ell}\left(\frac{\ln \left( \frac{|x|}{\sqrt{R}}\right)}{\ln (\sqrt{R})}\right), \quad(t, x) \in Q,
\end{equation}

\noindent where the $B(x)$ is biharmonic function as \eqref{H(x)}, $\eta(t)$ is cut-off function as \eqref{zeta} and $\mathcal{F}: \mathbb{R} \rightarrow[0,1]$ is a smooth function as \eqref{f.s}.

\begin{Lem}\label{crit.2.t}
    For $N\geq 5$ and $p=\frac{N}{N-4}$ the following estimate holds:
    \begin{equation}\label{aldingi1}
         \int_Q \varphi_2^{-\frac{1}{p-1}}(x,t)\left|\partial_t \varphi_2(x,t)\right|^{p^{\prime}}\mathrm{d}x \mathrm{d}t \leq C T^{1-\frac{N}{4}}R^{N} .
    \end{equation}
   
\end{Lem}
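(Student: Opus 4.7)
The strategy mirrors the proof of \eqref{critpart} exactly, with the biharmonic weight $B(x)$ replacing the harmonic weight $H(x)$. Using the product form $\varphi_2(t,x)=\eta(t)\phi(x)$, I would separate
$$\int_Q \varphi_2^{-\frac{1}{p-1}}|\partial_t\varphi_2|^{p'}\,\mathrm{d}x\,\mathrm{d}t = \left(\int_{D^c}\phi(x)\,\mathrm{d}x\right)\left(\int_0^T \eta^{-\frac{1}{p-1}}(t)\,|\partial_t\eta(t)|^{p'}\,\mathrm{d}t\right),$$
and note that for the critical exponent $p=N/(N-4)$ one has $p'=N/4$, so the target bound reads $CT^{1-N/4}R^N$.

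For the time factor, the computation is verbatim the one in \eqref{N2.derivetavizeta.estimate} and does not interact with the spatial structure: the choice $\ell\geq 4p/(p-1)$ guarantees $\ell-p'\geq 0$, so that $\int_{1/2}^{1}\zeta^{\ell-p'}(s)\,\mathrm{d}s$ is a harmless finite constant, giving $\int_0^T \eta^{-\frac{1}{p-1}}|\partial_t\eta|^{p'}\,\mathrm{d}t \leq CT^{1-p'}=CT^{1-N/4}$.

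For the spatial factor, I would invoke Lemma \ref{H(x.estimate).N3.normal}, which for $N\geq 5$ yields the uniform bound $|B(x)|\leq C$ on all of $\{|x|>1\}$ (this is the crucial structural input: unlike the $N=2$ and $N=4$ cases, no logarithm appears). Since $0\leq \mathcal{F}\leq 1$ and the cut-off $\mathcal{F}^{\ell}\left(\frac{\ln(|x|/\sqrt{R})}{\ln\sqrt{R}}\right)$ is supported where the argument is $\leq 1$, i.e.\ where $|x|\leq R$, passing to polar coordinates gives
$$\int_{D^c}\phi(x)\,\mathrm{d}x \leq C\int_{1<|x|<R}\mathrm{d}x \leq C\int_1^R r^{N-1}\,\mathrm{d}r \leq CR^N,$$
in direct analogy with \eqref{jjj}. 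Multiplying the spatial and temporal estimates produces the announced bound.

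No genuine obstacle arises: the argument is structurally identical to the proof of \eqref{critpart}, and the only ingredient specific to the biharmonic weight is the uniform bound on $B(x)$ for $N\geq 5$, which is already established. The proof would be essentially a three-line combination of the separated estimates above.
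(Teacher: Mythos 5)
Your proposal is correct and follows the paper's own proof essentially verbatim: the same separation of variables, the same time estimate \eqref{N2.derivetavizeta.estimate} with $p'=N/4$, and the same use of the uniform bound $|B(x)|\leq C$ from Lemma \ref{H(x.estimate).N3.normal} together with polar coordinates on $1<|x|<R$ to obtain the $R^N$ factor. Your explicit remark that the bound on $B$ holds uniformly on $\{|x|>1\}$ (not just on the annulus $R<|x|<2R$ where the lemma is formally stated) is a welcome clarification, since the paper applies the lemma on the larger region without comment.
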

    
\begin{proof}
Applying the properties of the test functions as defined in \eqref{psi2}, we derive
     $$
\begin{aligned}
 \int_Q \varphi_2^{-\frac{1}{p-1}}(x,t)\left|\partial_t \varphi_2(x,t)\right|^{\frac{N}{4}}\mathrm{d}x \mathrm{d}t&=\left(\int_{D^c} \phi(x)\mathrm{d}x \right)\left(\int_0^T \eta^{-\frac{1}{p-1}}(t)\big|\partial_t\eta(t)\big|^{\frac{N}{4}} \mathrm{d}t\right) .
\end{aligned}
$$

\noindent implying the lemma \ref{H(x.estimate).N3.normal}, we get
\begin{equation}\label{psi11}
    \begin{aligned}
        \int_{D^c} \phi(x) \mathrm{d}x &=\int_{1<|x|<R} B(x)  \mathcal{F}^{\ell}\left(\frac{\ln \left( \frac{|x|}{\sqrt{R}}\right)}{\ln (\sqrt{R})}\right)\mathrm{d}x \\
         & \leq C\int_{1<|x|<R} \mathcal{F}^l\left(\frac{\ln \left( \frac{|x|}{\sqrt{R}}\right)}{\ln (\sqrt{R})}\right)\mathrm{d}x\\
        & \stackrel{|x|=r}{\leq} C\int_1^{R} r^{N-1} \mathrm{d}r\\
        &\leq CR^{N}.
    \end{aligned}
\end{equation}

\noindent Therefore, \eqref{aldingi1} follows from \eqref{N2.derivetavizeta.estimate} and \eqref{psi11}.
\end{proof}
The proof of the next lemma is based on lemma \ref{salistiry} and lemma \ref{H(x.estimate).N3.normal}. Due to the biharmonic function, the estimate has little changes.
\begin{Lem}\label{salistiry.normal}
      The following estimates hold:
$$\left|\nabla(\Delta B(x)) \cdot\nabla \mathcal{F}^{\ell}\left(\frac{\ln\left(\frac{|x|}{\sqrt{R}}\right) }{\ln{\sqrt{R}}}\right)\right|  \leq   \frac{C|x|^{-N}}{\ln{\sqrt{R}}}\mathcal{F}^{\ell-4}\left(\frac{\ln\left(\frac{|x|}{\sqrt{R}}\right) }{\ln{\sqrt{R}}}\right),$$ 
$$\left|\nabla B(x) \cdot\nabla\left(\Delta \mathcal{F}^{\ell}\left(\frac{\ln\left(\frac{|x|}{\sqrt{R}}\right) }{\ln{\sqrt{R}}}\right)\right)\right|  \leq  \frac{C|x|^{-N}}{\ln{\sqrt{R}}}\mathcal{F}^{\ell-4}\left(\frac{\ln\left(\frac{|x|}{\sqrt{R}}\right) }{\ln{\sqrt{R}}}\right),$$ 
$$\left|B(x)\Delta^2 \mathcal{F}^{\ell}\left(\frac{\ln\left(\frac{|x|}{\sqrt{R}}\right) }{\ln{\sqrt{R}}}\right)\right|  \leq  \frac{C|x|^{-4}}{\ln{\sqrt{R}}} B(x) \mathcal{F}^{\ell-4}\left(\frac{\ln\left(\frac{|x|}{\sqrt{R}}\right) }{\ln{\sqrt{R}}}\right),$$ and
$$\left|\Delta B(x) \Delta \mathcal{F}^{\ell}\left(\frac{\ln\left(\frac{|x|}{\sqrt{R}}\right) }{\ln{\sqrt{R}}}\right)\right|  \leq  \frac{C|x|^{-N}}{\ln{\sqrt{R}}} \mathcal{F}^{\ell-4}\left(\frac{\ln\left(\frac{|x|}{\sqrt{R}}\right) }{\ln{\sqrt{R}}}\right),$$
where $\cdot$ denotes the inner product in $\mathbb{R}^N$.
\end{Lem}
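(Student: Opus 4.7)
The plan is to reduce Lemma \ref{salistiry.normal} to a direct combination of two ingredients already established: the differential estimates for $\mathcal{F}^{\ell}\!\left(\ln(|x|/\sqrt{R})/\ln\sqrt{R}\right)$ from Lemma \ref{salistiry}, together with pointwise bounds on the derivatives of $B(x)$ obtained by direct differentiation of the fourth case in \eqref{H(x)}. The statement is just the application of a Leibniz-type product rule to each of the four cross-terms appearing in $\Delta^{2}\bigl(B(x)\mathcal{F}^{\ell}(\cdot)\bigr)$, and the structure of the bound is already prescribed by the exponents in Lemmas \ref{salistiry} and \ref{H(x.estimate).N3.normal}.

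First I would record the pointwise (not just in $R$) estimates of $B(x)$ valid on the relevant annulus $\sqrt{R}<|x|<R$, namely
\begin{equation*}
|B(x)|\leq C,\quad |\nabla B(x)|\leq C|x|^{3-N},\quad |\Delta B(x)|\leq C|x|^{2-N},\quad |\nabla(\Delta B(x))|\leq C|x|^{1-N}.
\end{equation*}
These are obtained exactly as in the proof of Lemma \ref{H(x.estimate).N3.normal}, but without the final step of replacing $|x|$ by $R$: starting from $B(x)=|x|^{2-N}-1+\tfrac{N-2}{N-4}(1-|x|^{4-N})$ and using $\Delta B=h''(r)+\tfrac{N-1}{r}h'(r)=2(N-2)|x|^{2-N}$.

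Next I would invoke Lemma \ref{salistiry} to recall
\begin{equation*}
\bigl|\nabla\!\cdot\!\Delta\mathcal{F}^{\ell}\bigr|\leq \frac{C}{|x|^{3}\ln\sqrt{R}}\mathcal{F}^{\ell-4},\qquad \bigl|\Delta^{2}\mathcal{F}^{\ell}\bigr|\leq \frac{C}{|x|^{4}\ln\sqrt{R}}\mathcal{F}^{\ell-4},
\end{equation*}
and, by the same chain-rule reasoning (which is purely radial since $\mathcal{F}$ is radial), the intermediate estimates $|\nabla\mathcal{F}^{\ell}|\leq C|x|^{-1}(\ln\sqrt{R})^{-1}\mathcal{F}^{\ell-4}$ and $|\Delta\mathcal{F}^{\ell}|\leq C|x|^{-2}(\ln\sqrt{R})^{-1}\mathcal{F}^{\ell-4}$ (here I pad powers of $\mathcal{F}$ down to $\ell-4$ using $\mathcal{F}\leq 1$, which is costless).

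Multiplying these pairs gives the four required inequalities term by term:
\begin{equation*}
\begin{aligned}
|\nabla(\Delta B)|\,|\nabla\mathcal{F}^{\ell}|&\leq C|x|^{1-N}\cdot \tfrac{C}{|x|\ln\sqrt{R}}\mathcal{F}^{\ell-4}=\tfrac{C|x|^{-N}}{\ln\sqrt{R}}\mathcal{F}^{\ell-4},\\[2pt]
|\nabla B|\,|\nabla(\Delta\mathcal{F}^{\ell})|&\leq C|x|^{3-N}\cdot \tfrac{C}{|x|^{3}\ln\sqrt{R}}\mathcal{F}^{\ell-4}=\tfrac{C|x|^{-N}}{\ln\sqrt{R}}\mathcal{F}^{\ell-4},\\[2pt]
|B|\,|\Delta^{2}\mathcal{F}^{\ell}|&\leq B(x)\cdot \tfrac{C|x|^{-4}}{\ln\sqrt{R}}\mathcal{F}^{\ell-4},\\[2pt]
|\Delta B|\,|\Delta\mathcal{F}^{\ell}|&\leq C|x|^{2-N}\cdot \tfrac{C}{|x|^{2}\ln\sqrt{R}}\mathcal{F}^{\ell-4}=\tfrac{C|x|^{-N}}{\ln\sqrt{R}}\mathcal{F}^{\ell-4},
\end{aligned}
\end{equation*}
which are exactly the asserted bounds. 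Since $\mathcal{F}^{\ell}$ is constant where $|x|<\sqrt{R}$, these trivially extend to the whole domain of the test function.

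There is no real obstacle: the proof is mechanical given the two precursor lemmas. The only mild subtlety is the bookkeeping of the powers of $\mathcal{F}$ coming from successive applications of the chain rule, but since we are always free to absorb excess powers via $\mathcal{F}\leq 1$, every term is uniformly controlled by $\mathcal{F}^{\ell-4}$.
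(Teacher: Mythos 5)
Your proof is correct and is exactly the argument the paper intends: the paper gives no written proof for this lemma, only the remark that it follows by combining Lemma \ref{salistiry} with Lemma \ref{H(x.estimate).N3.normal}, and your term-by-term multiplication of the pointwise bounds $|B|\leq C$, $|\nabla B|\leq C|x|^{3-N}$, $|\Delta B|\leq C|x|^{2-N}$, $|\nabla\Delta B|\leq C|x|^{1-N}$ against the chain-rule estimates for $\mathcal{F}^{\ell}$ (padding powers of $\mathcal{F}$ via $\mathcal{F}\leq 1$) reproduces all four stated bounds. The only implicit restriction, shared with the paper, is that you use the $N\geq 5$ branch of \eqref{H(x)}, which is consistent with the lemma's sole application in the critical case $p=\frac{N}{N-4}$.
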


\begin{Lem}\label{crit.2.b}
    The following integral can be estimated as
    \begin{equation}\label{aldingi3}
        \int_Q \varphi_2^{-\frac{1}{p-1}}(x,t)\left|\Delta^2 \varphi_2(x,t)\right|^{p'} \mathrm{d}x \mathrm{d}t\leq CT^1R^2 (\ln{R})^{-\frac{N}{4}}.
    \end{equation}
    
\end{Lem}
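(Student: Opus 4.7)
The plan is to follow the same template as Lemma \ref{estimate.b}, replacing the harmonic test function built from $H(x)$ by the biharmonic test function built from $B(x)$ and invoking the companion derivative bounds from Lemma \ref{salistiry.normal} and Lemma \ref{H(x.estimate).N3.normal}. Since $\varphi_{2}(t,x)=\eta(t)\phi(x)$ separates, the integral factors as the product $\left(\int_{0}^{T}\eta(t)\,\mathrm{d}t\right)\left(\int_{D^{c}}\phi^{-\frac{1}{p-1}}\left|\Delta^{2}\phi\right|^{p'}\mathrm{d}x\right)$, whose first factor equals $CT$ by \eqref{zeta.estimate}; this accounts for the $T^{1}$ appearing in the claimed bound.

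The crux is controlling the spatial factor. I would expand $\Delta^{2}\phi=\Delta^{2}(B\,\mathcal{F}^{\ell})$ via the Leibniz formula \eqref{bxi}: the $\Delta^{2}B$ contribution drops because $B$ is biharmonic, so only four mixed-derivative terms remain. Applying Lemma \ref{salistiry.normal} to the $\mathcal{F}$-derivatives and Lemma \ref{H(x.estimate).N3.normal} to the $B$-derivatives, on the annulus $\sqrt{R}<|x|<R$ (outside of which either $\mathcal{F}\equiv 1$ forces $\Delta^{2}\phi=0$ through $\Delta^{2}B=0$, or the cutoff vanishes) the four terms are respectively of order $|x|^{-N}/\ln\sqrt{R}$ thrice and $|x|^{-4}B(x)/\ln\sqrt{R}$ once. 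For $N\geq 5$ the last term dominates since $|x|^{-N}\leq|x|^{-4}$ in this annulus and $B$ is bounded, giving
\begin{equation*}
\left|\Delta^{2}\phi(x)\right|\leq \frac{C}{|x|^{4}\ln\sqrt{R}}\,\mathcal{F}^{\ell-4}\!\left(\frac{\ln(|x|/\sqrt{R})}{\ln\sqrt{R}}\right).
\end{equation*}

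To conclude, I would use that $B(x)$ is bounded both above and below by positive constants for $|x|$ large, so that $\phi^{-1/(p-1)}\leq C\mathcal{F}^{-\ell/(p-1)}$, together with the choice $\ell\geq 4p/(p-1)=4p'$ which ensures $\mathcal{F}^{\ell-4p'}\leq 1$. The spatial integral then reduces, after passing to polar coordinates, to an expression of the form $C(\ln R)^{-p'}\int_{\sqrt{R}}^{R}r^{N-1-4p'}\,\mathrm{d}r$; the Fujita-critical identity $4p'=N$ turns the radial integral into $\int_{\sqrt{R}}^{R}r^{-1}\mathrm{d}r\simeq\ln R$, and multiplying by the $CT$ time factor yields a bound of the stated form with the $(\ln R)^{-N/4}$ logarithmic gain. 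The main technical obstacle is the Leibniz bookkeeping — making sure none of the three cross terms $\nabla(\Delta B)\cdot\nabla\mathcal{F}^{\ell}$, $\Delta B\,\Delta\mathcal{F}^{\ell}$, $\nabla B\cdot\nabla(\Delta\mathcal{F}^{\ell})$ produces an unexpected polynomial factor in $R$ and that the residual $\mathcal{F}$ exponent stays nonnegative. The delicate point is the standard critical-case cancellation between the prefactor $(\ln\sqrt{R})^{-p'}$ and the logarithm generated by $\int r^{-1}\mathrm{d}r$, which is precisely the mechanism that produces the critical threshold behaviour underlying the blow-up argument.
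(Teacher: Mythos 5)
Your proposal follows essentially the same route as the paper: factor out the time integral via \eqref{zeta.estimate}, expand $\Delta^2(B\,\mathcal{F}^{\ell})$ by \eqref{bxi} with $\Delta^2 B=0$, control the four cross terms by Lemmas \ref{salistiry.normal} and \ref{H(x.estimate).N3.normal}, use that $B$ is bounded above and below on $\sqrt{R}<|x|<R$ and that $\ell\geq 4p'$, and finish in polar coordinates with $4p'=N$. The bookkeeping you describe is correct.

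One point worth noting: your radial integral is $\int_{\sqrt{R}}^{R}r^{N-1-4p'}\,\mathrm{d}r=\int_{\sqrt{R}}^{R}r^{-1}\,\mathrm{d}r\simeq\ln R$, so you actually obtain $CT(\ln R)^{1-\frac{N}{4}}$, which is strictly sharper than the stated $CTR^{2}(\ln R)^{-\frac{N}{4}}$ (and of course implies it for large $R$), rather than merely ``of the stated form.'' The paper's own displayed computation inserts a factor $|x|^{2-N}$ into the integrand, producing $\int_{\sqrt{R}}^{R}r\,\mathrm{d}r\simeq R^{2}$ and hence the weaker bound in the lemma statement; this factor does not appear to be justified by the preceding estimates, and indeed when the lemma is invoked in the critical-case proof of Theorem \ref{th.n=3} for Problem II the bound actually used is $CT(\ln R)^{1-\frac{N}{4}}$ — precisely the one you derive, and the only one strong enough for that argument after dividing by $T=R^{j}$. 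So your version is the correct and usable one; it matches the analogous Lemma \ref{estimate.b} for $\varphi_1$.
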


\begin{proof}

By employing the attributes of the test functions specified in equation \eqref{psi2}, we can derive    

  $$
\begin{aligned}
\int_Q  \varphi_2^{-\frac{1}{p-1}}(x,t)\left|\Delta^2 \varphi_2(x,t)\right|^{p'}\mathrm{d}x \mathrm{d}t =\left(\int_0^T\eta(t) \mathrm{d}t \right)\left(\int_{D^c} \phi_1^{-\frac{1}{p-1}}(x)\left|\Delta^2\phi(x)\right|^{p^{\prime}}\mathrm{d}x \right).
\end{aligned}
$$
Further, from \eqref{bxi} and lemma \ref{salistiry.normal} and following estimate
\begin{equation*}
    \left|\Delta^2 \phi(x)\right|\leq C\left(\frac{|x|^{-4}+|x|^{-N}}{\ln{\sqrt{R}}} \right) \mathcal{F}^{\ell-4}\left(\frac{\ln\left(\frac{|x|}{\sqrt{R}}\right) }{\ln{\sqrt{R}}}\right),
\end{equation*}

\noindent that is
\begin{equation}\label{aldingi4}
  \begin{aligned}
    \int_{D^c} \phi_1^{-\frac{1}{p-1}}(x)\big|\Delta^2 &\phi(x)\big|^{p^{\prime}}\mathrm{d}x\\
    & =   \int_{\sqrt{R}<|x|<R} B(x)^{-\frac{1}{p-1}} \mathcal{F}^{-\frac{\ell}{p-1}}\left(\frac{\ln\left(\frac{|x|}{\sqrt{R}}\right) }{\ln{\sqrt{R}}}\right)\left|\Delta^2\phi(x)\right|^{p'}\mathrm{d}x \\
    &\leq C(\ln{\sqrt{R}})^{-\frac{N}{4}}\int_{\sqrt{R}<|x|<R} |x|^{2-N} \mathcal{F}^{l-N}\left(\frac{\ln\left(\frac{|x|}{\sqrt{R}}\right) }{\ln{\sqrt{R}}}\right)\mathrm{d}x\\
    &\stackrel{|x|=r}{\leq} C(\ln{\sqrt{R}})^{-\frac{N}{4}}\int_{\sqrt{R}}^R r^{2-N}r^{N-1}\mathrm{d}r\\
    &\leq C R^2(\ln{R})^{-\frac{N}{4}}.
\end{aligned}  
\end{equation}

 \noindent Therefore, \eqref{zeta.estimate} and \eqref{aldingi4} implies  \eqref{aldingi3}. This brings the proof to its conclusion.

\end{proof}

\subsection{Properties of $\boldsymbol{\varphi_3(x,t)}$}
We consider the test function as
\begin{equation}\label{chi}
\varphi_3(x,t)=\eta(t) \chi(x)=\zeta^{\ell}\left(\frac{t}{T}\right)\xi^{\ell}\left(\frac{|x|}{R}\right), \quad(t, x) \in  Q,
\end{equation}
where $R,T$ are sufficiently large constants, $\eta$ is cut-off function \eqref{zeta} and $\xi: \mathbb{R} \rightarrow[0,1]$ is a smooth function \eqref{xi.s}.

\begin{Lem}\label{estimate.t.neu} 
    For $R>0$ and $T>0$ the following estimate holds:
    \begin{equation*}
        \int_Q \varphi_3^{-\frac{1}{p-1}}(x,t)\left|\partial_t\varphi_3(x,t)\right|^{p^{\prime}} \mathrm{d}x \mathrm{d}t\leq C T^{1-p^{\prime}}R^{N}.
    \end{equation*}
   
\end{Lem}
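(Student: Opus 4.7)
The plan is to mimic exactly the strategy already used for Lemma \ref{estimate.t.normal.cr} and Lemma \ref{estimate.t}, which is the cleanest of these computations since the test function $\varphi_3(x,t)=\eta(t)\chi(x)$ is a pure product with $\chi(x)=\xi^{\ell}(|x|/R)$ (no harmonic or biharmonic weight is attached to the spatial cut-off). First I would separate variables. Using the product structure of $\varphi_3$, we write
$$
\varphi_3^{-\frac{1}{p-1}}(x,t)\left|\partial_t\varphi_3(x,t)\right|^{p'}
=\chi(x)\,\eta^{-\frac{1}{p-1}}(t)\bigl|\partial_t\eta(t)\bigr|^{p'},
$$
so that the double integral factors as $\left(\int_{D^c}\chi(x)\,dx\right)\left(\int_0^T \eta^{-\frac{1}{p-1}}(t)\bigl|\partial_t\eta(t)\bigr|^{p'}dt\right)$.

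Next I would dispatch the time factor by the calculation already carried out in \eqref{N2.derivetavizeta.estimate}: since $\eta(t)=\zeta^{\ell}(t/T)$, a chain-rule computation together with the choice $\ell\geq \frac{4p}{p-1}$ (so that $\ell-p'\geq 0$ and the integrand is bounded) gives
$$
\int_0^T \eta^{-\frac{1}{p-1}}(t)\bigl|\partial_t\eta(t)\bigr|^{p'}\,dt
= CT^{-p'}\int_{T/2}^{T}\zeta^{\ell-p'}\!\left(\tfrac{t}{T}\right)dt
\leq C T^{1-p'}.
$$

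For the spatial factor, $\chi(x)$ is supported in $\{|x|\leq 2R\}\cap D^c$ and bounded by $1$, so changing to polar coordinates yields
$$
\int_{D^c}\chi(x)\,dx
=\int_{1<|x|<2R}\xi^{\ell}\!\left(\tfrac{|x|}{R}\right)dx
\leq C\int_{1}^{2R} r^{N-1}\,dr
\leq C R^{N}.
$$
Multiplying these two bounds yields the claimed estimate $CT^{1-p'}R^N$. I do not expect any real obstacle: unlike the $\varphi_1,\varphi_2$ cases there is no harmonic/biharmonic auxiliary function to control, so no additional $\ln R$ or $R^{2-N}$ factor appears, and no lemma analogous to Lemma \ref{A(x.estimate).N=3} or Lemma \ref{H(x.estimate).N3.normal} is needed. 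The only bookkeeping point to check is that $\ell$ is large enough (namely $\ell\geq 4p/(p-1)$, as assumed in \eqref{zeta}) so that the exponent $\ell-p'$ on $\zeta$ remains nonnegative in the time estimate; this has already been imposed globally on the test functions.
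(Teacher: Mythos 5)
Your proposal is correct and follows essentially the same route as the paper: factor the integral using the product structure $\varphi_3=\eta\chi$ (with the exponent identity $p'-\tfrac{1}{p-1}=1$), bound the time factor by $CT^{1-p'}$ exactly as in \eqref{N2.derivetavizeta.estimate}, and bound the spatial factor by $CR^N$ via polar coordinates as in \eqref{aldingi5}. No gaps.
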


\begin{proof}  It's obvious that from \eqref{chi}, we have
$$
\begin{aligned}
 \int_Q \varphi_3^{-\frac{1}{p-1}}(x,t)\left|\partial_t \varphi_3(x,t)\right|^{p^{\prime}}\mathrm{d}x \mathrm{d}t =\left(\int_{D^c} \chi(x)\mathrm{d}x \right)\left(\int_0^T \eta^{-\frac{1}{p-1}}(t)\big|\partial_t\eta(t)\big|^{p^{\prime}} \mathrm{d}t\right).
\end{aligned}
$$
Moreover, \eqref{xi.s} implies 
    \begin{equation}\label{aldingi5}
    \begin{aligned}
        \int_{D^c}\chi(x) \mathrm{d}x = \int_{1<|x|<2R} \xi^l\left(\frac{|x|}{R}\right)\mathrm{d}x \stackrel{|x|=r}{\leq} C\int_1^{2R}  r^{N-1} \mathrm{d}r\leq CR^{N}.
    \end{aligned}
\end{equation}
The desired result is obtained by combining \eqref{N2.derivetavizeta.estimate} and \eqref{aldingi5}.
\end{proof}

\begin{Lem}\label{estimate.b.neu}
    The following integral can be evaluated as
    \begin{equation*}
        \int_Q  \varphi_3^{-\frac{1}{p-1}}(x,t)\left|\Delta^2 \varphi_3(x,t)\right|^{p'} \mathrm{d}x \mathrm{d}t \leq CT^1 R^{N-4p'}.
    \end{equation*}   
\end{Lem}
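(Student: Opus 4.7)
The plan is to exploit the product structure $\varphi_3(x,t) = \eta(t)\chi(x)$, which immediately gives the factorization
\[
\int_Q \varphi_3^{-\frac{1}{p-1}}(x,t)\left|\Delta^2\varphi_3(x,t)\right|^{p'} \mathrm{d}x\, \mathrm{d}t = \left(\int_0^T \eta(t)\, \mathrm{d}t\right)\left(\int_{D^c}\chi^{-\frac{1}{p-1}}(x)\left|\Delta^2\chi(x)\right|^{p'}\mathrm{d}x\right),
\]
because the time cutoff $\eta$ commutes with the bi-Laplacian in $x$ and exits at full power, while only the $1/(p-1)$ power of $\chi$ remains coupled to $\Delta^{2}\chi$. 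The time factor was already estimated by $CT$ in \eqref{zeta.estimate}, so the task reduces to bounding the purely spatial integral.

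For the spatial part I would invoke Lemma \ref{samet.lemma}, which gives the pointwise bound $|\Delta^2 \xi^\ell(|x|/R)| \leq CR^{-4}\xi^{\ell-4}(|x|/R)$ on the annulus $R<|x|<2R$, which is exactly where the support of $\Delta^{2}\chi$ lives. Substituting $\chi = \xi^\ell(|x|/R)$, the integrand on this annulus becomes
\[
CR^{-4p'}\,\xi^{(\ell-4)p'-\ell/(p-1)}\!\left(\tfrac{|x|}{R}\right).
\]
A short arithmetic check using $p'=p/(p-1)$ shows that the exponent of $\xi$ collapses to $\ell - 4p'$, which is nonnegative thanks to the standing assumption $\ell \geq 4p/(p-1)=4p'$ built into the definition of the test functions. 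Consequently the integrand is dominated by $CR^{-4p'}$ uniformly on the annulus.

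Finally, I would pass to polar coordinates and bound the spatial integral by $CR^{-4p'}\int_R^{2R} r^{N-1}\, \mathrm{d}r \leq CR^{N-4p'}$. Multiplying by the time factor $CT$ yields the claim. There is no serious obstacle here because the simpler cutoff $\chi = \xi^\ell(|x|/R)$ carries no harmonic or biharmonic prefactor (unlike $\varphi_1$ and $\varphi_2$), so no logarithmic factors or dimension-dependent case splits appear; the only subtle point is the exponent balance, which is exactly what the condition $\ell \geq 4p'$ guarantees.
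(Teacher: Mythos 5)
Your proposal is correct and follows essentially the same route as the paper: factor the product structure $\varphi_3=\eta\chi$, estimate the time factor by $CT$ via \eqref{zeta.estimate}, apply Lemma \ref{samet.lemma} on the annulus $R<|x|<2R$, and integrate in polar coordinates. Your explicit verification that the exponent of $\xi$ collapses to $\ell-4p'\geq 0$ is a detail the paper leaves implicit, but the argument is the same.
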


\begin{proof}  From test functions \eqref{chi} can be obtained following 
     $$
\begin{aligned}
\int_Q  \varphi_3^{-\frac{1}{p-1}}(x,t)\left|\Delta^2 \varphi_3(x,t)\right|^{p'}\mathrm{d}x \mathrm{d}t=\left(\int_0^T\eta(t) \mathrm{d}t \right)\left(\int_{D^c} \chi_1^{-\frac{1}{p-1}}(x)\left|\Delta^2\chi(x)\right|^{p^{\prime}}\mathrm{d}x\right).
\end{aligned}
$$
Further, taking into account lemma \ref{samet.lemma}, we have
\begin{equation*}
    \left|\Delta^2\chi(x)\right| \leq C R^{-4} \xi^{\ell-4}\left(\frac{|x|}{R}\right),
\end{equation*}
 \noindent and \eqref{zeta.estimate} implies
\begin{equation*}\label{aldingi6}
\begin{aligned}
\int_Q  \varphi_3^{-\frac{1}{p-1}}(x,t)\left|\Delta^2 \varphi_3(x,t)\right|^{p'}\mathrm{d}x \mathrm{d}t &= CT \int_{R<|x|<2R} \chi_1^{-\frac{1}{p-1}}(x)\left|\Delta^2\chi(x)\right|^{p^{\prime}}\mathrm{d}x \\
&\leq  C TR^{-4 p^{\prime}}  \int_{R<|x|<2 R} \xi^{l-4 p^{\prime}}\left(\frac{|x|}{R}\right) \mathrm{d}x\\
&\stackrel{|x|=r}{\leq} C TR^{-4 p^{\prime}}  \int_{1<r<2}r^{N-1}\mathrm{d}r\\
&\leq         CT R^{N-4 p^{\prime}} .
\end{aligned}
\end{equation*}

\noindent     The desired result achieved.
\end{proof}

\subsection{The logarithmic argument case}
We introduce test functions of the form
$$
\varphi_3(t, x)=\eta(t) \chi(x)=\zeta^{\ell}\left(\frac{t}{T}\right)\mathcal{F}^{\ell}\left(\frac{\ln \left( \frac{|x|}{\sqrt{R}}\right)}{\ln (\sqrt{R})}\right), \quad(t, x) \in Q,
$$

\noindent where $\eta(t)$ is cut-off function as \eqref{zeta} and $\mathcal{F}: \mathbb{R} \rightarrow[0,1]$ is a smooth function as \eqref{f.s}.

\begin{Lem}\label{estimate.t.neu.cr} 
    Let $N\geq 5$ and $p=\frac{N}{N-4}$. For $R>0$ and $T>0$ the following estimate holds:
    \begin{equation*}
         \int_Q \varphi_3^{-\frac{1}{p-1}}(x,t)\left|\partial_t \varphi_3(x,t)\right|^{p^{\prime}}\mathrm{d}x \mathrm{d}t \leq C T^{1-\frac{N}{4}}R^{N} .
    \end{equation*}
   
\end{Lem}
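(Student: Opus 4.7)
The plan is to exploit the product structure $\varphi_3(t,x)=\eta(t)\chi(x)$ to split the integral into independent temporal and spatial factors, just as in Lemma \ref{estimate.t.neu}, and then specialize constants using $p=\frac{N}{N-4}$, which gives $p'=\frac{N}{4}$.

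First I would verify the algebraic identity $p'-\frac{1}{p-1}=1$ (since $p'=\frac{p}{p-1}$). This lets the factor $\chi^{p'}/\chi^{1/(p-1)}$ collapse to $\chi$, so that
\begin{equation*}
\int_Q \varphi_3^{-\frac{1}{p-1}}\bigl|\partial_t\varphi_3\bigr|^{p'}\mathrm{d}x\mathrm{d}t
=\Bigl(\int_{D^c}\chi(x)\,\mathrm{d}x\Bigr)\Bigl(\int_0^T\eta^{-\frac{1}{p-1}}|\partial_t\eta|^{p'}\mathrm{d}t\Bigr).
\end{equation*}

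Next, for the temporal factor I would repeat the calculation in \eqref{N2.derivetavizeta.estimate}: differentiating $\eta(t)=\zeta^\ell(t/T)$ yields a factor $T^{-p'}$ in front, and after the change of variables $s=t/T$ the resulting $\int_{1/2}^1 \zeta^{\ell-p'}(s)|\zeta'(s)|^{p'}\mathrm{d}s$ is bounded by a constant (note $\ell\geq\frac{4p}{p-1}>p'$, so the integrand is well defined). This gives $\int_0^T\eta^{-\frac{1}{p-1}}|\partial_t\eta|^{p'}\mathrm{d}t\leq CT^{1-p'}=CT^{1-N/4}$.

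For the spatial factor, I use that $\mathcal{F}\leq 1$ and $\mathcal{F}\bigl(\tfrac{\ln(|x|/\sqrt{R})}{\ln\sqrt{R}}\bigr)\equiv 0$ once $|x|\geq R$. Switching to polar coordinates on $1<|x|<R$ yields
\begin{equation*}
\int_{D^c}\chi(x)\,\mathrm{d}x\leq \int_{1<|x|<R}\mathcal{F}^\ell\!\left(\tfrac{\ln(|x|/\sqrt{R})}{\ln\sqrt{R}}\right)\mathrm{d}x\leq C\int_1^R r^{N-1}\mathrm{d}r\leq CR^N,
\end{equation*}
exactly as in \eqref{psi11}. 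Multiplying the two bounds produces the claimed estimate $CT^{1-N/4}R^N$. I do not anticipate any real obstacle: this lemma is the direct analogue of Lemma \ref{estimate.t.neu} with the exponent $p'$ specialized to $N/4$, and no biharmonic function $H(x)$ or $B(x)$ enters the spatial integral since $\chi$ is built only from the cut-off $\mathcal{F}$.
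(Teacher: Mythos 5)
Your proof is correct and follows essentially the same route as the paper: split the integral into a temporal factor bounded by $CT^{1-p'}$ via \eqref{N2.derivetavizeta.estimate} and a spatial factor bounded by $CR^N$ via the support and boundedness of $\mathcal{F}$, then use $p'=\frac{N}{4}$. Your explicit check of the identity $p'-\frac{1}{p-1}=1$, which justifies the factorization, is a detail the paper leaves implicit but is exactly the right observation.
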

    
\begin{proof}
Utilizing the properties of the test functions described in equation \eqref{chi}, we obtain
        $$
\begin{aligned}
 \int_Q \varphi_3^{-\frac{1}{p-1}}(x,t)\left|\partial_t \varphi_3(x,t)\right|^{p^{\prime}}\mathrm{d}x \mathrm{d}t =\left(\int_{D^c} \chi(x)\mathrm{d}x \right)\left(\int_0^T \eta^{-\frac{1}{p-1}}(t)\big|\partial_t\eta(t)\big|^{p^{\prime}} \mathrm{d}t\right).
\end{aligned}
$$
The space variable integral is estimated as  
\begin{equation}\label{aldingi7}
    \begin{aligned}
        \int_{D^c} \chi(x) \mathrm{d}x &=\int_{D^c}   \mathcal{F}^{\ell}\left(\frac{\ln \left( \frac{|x|}{\sqrt{R}}\right)}{\ln (\sqrt{R})}\right)\mathrm{d}x \\
         & \leq C\int_{1<|x|<R} \mathcal{F}^{\ell}\left(\frac{\ln \left( \frac{|x|}{\sqrt{R}}\right)}{\ln (\sqrt{R})}\right)\mathrm{d}x \\
        & \stackrel{|x|=r}{\leq} C\int_{1<|x|<R}  r^{N-1} \mathrm{d}r\\
        &\leq CR^{N},
    \end{aligned}
\end{equation}
and inequality \eqref{N2.derivetavizeta.estimate} implies that 

   $$
\begin{aligned}
 \int_Q \varphi_3^{-\frac{1}{p-1}}(x,t)\left|\partial_t \varphi_3(x,t)\right|^{p^{\prime}}\mathrm{d}x \mathrm{d}t \leq C T^{1-\frac{N}{4}}R^{N} .
\end{aligned}
$$
Therefore, we complete the proof.
\end{proof}

\begin{Lem}\label{estimate.b.neu.cr}
    The following integral can be estimated as
    \begin{equation*}
        \int_Q \varphi_3^{-\frac{1}{p-1}}(x,t)\left|\Delta^2 \varphi_3(x,t)\right|^{p'} \mathrm{d}x \mathrm{d}t\leq CT (\ln{\sqrt{R}})^{1-\frac{N}{4}}.
    \end{equation*}
    
\end{Lem}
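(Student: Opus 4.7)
The plan is to mirror the structure of Lemmas \ref{estimate.b} and \ref{crit.2.b}, with the simplification that $\chi(x)$ carries no harmonic/biharmonic factor. Since $\varphi_3(t,x)=\eta(t)\chi(x)$ and $\Delta^2$ acts only on the spatial variable, I would first separate
\begin{equation*}
\int_Q \varphi_3^{-\frac{1}{p-1}}\bigl|\Delta^2\varphi_3\bigr|^{p'}\,\mathrm dx\,\mathrm dt
=\left(\int_0^T\eta(t)\,\mathrm dt\right)\!\left(\int_{D^c}\chi^{-\frac{1}{p-1}}(x)\bigl|\Delta^2\chi(x)\bigr|^{p'}\,\mathrm dx\right),
\end{equation*}
and bound the temporal factor by $CT$ via \eqref{zeta.estimate}. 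Note that here $\chi=\mathcal{F}^\ell\!\bigl(\tfrac{\ln(|x|/\sqrt R)}{\ln\sqrt R}\bigr)$ directly, so $\Delta^2\chi=\Delta^2\mathcal{F}^\ell(\cdot)$ and Lemma~\ref{salistiry} applies without any intermediate terms.

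For the spatial factor, Lemma~\ref{salistiry} yields
\begin{equation*}
\bigl|\Delta^2\chi(x)\bigr|\leq \frac{C}{|x|^{4}\ln\sqrt R}\,\mathcal{F}^{\ell-4}\!\left(\frac{\ln(|x|/\sqrt R)}{\ln\sqrt R}\right).
\end{equation*}
Raising to the power $p'$ and multiplying by $\chi^{-1/(p-1)}=\mathcal{F}^{-\ell/(p-1)}(\cdot)$, the total exponent of $\mathcal{F}$ equals $(\ell-4)p'-\ell/(p-1)=\ell-4p'\geq 0$ because $\ell\geq \tfrac{4p}{p-1}=4p'$ by the standing assumption on $\ell$. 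Using $p=\tfrac{N}{N-4}$, hence $p'=N/4$ and $4p'=N$, the spatial integral collapses to
\begin{equation*}
\int_{\sqrt R<|x|<R}\frac{C}{|x|^{N}(\ln\sqrt R)^{N/4}}\,\mathcal{F}^{\ell-N}(\cdot)\,\mathrm dx
\leq \frac{C}{(\ln\sqrt R)^{N/4}}\int_{\sqrt R}^R r^{-N}\cdot r^{N-1}\,\mathrm dr.
\end{equation*}

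The radial integral is precisely $\int_{\sqrt R}^R r^{-1}\,\mathrm dr=\ln\sqrt R$, so the spatial factor is $\leq C(\ln\sqrt R)^{1-N/4}$. Combining with the $CT$ from time yields the claim. The only delicate point, and the reason for choosing the logarithmic cut-off in the first place, is the exact cancellation $|x|^{-N}\cdot|x|^{N-1}=|x|^{-1}$ in polar coordinates: this forces the radial integral to contribute only $\ln\sqrt R$ rather than a power of $R$, which is exactly what promotes the factor $(\ln\sqrt R)^{-N/4}$ to the final form $(\ln\sqrt R)^{1-N/4}$. No new obstacle beyond the bookkeeping of the $\mathcal{F}$-exponents is anticipated, since the argument is the $\chi$-only analogue of the $\varphi_1,\varphi_2$ critical-case estimates already carried out.
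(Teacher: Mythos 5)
Your proposal is correct and follows essentially the same route as the paper: the same time--space separation with the temporal factor bounded by $CT$ via \eqref{zeta.estimate}, the same application of Lemma~\ref{salistiry} to get $|\Delta^2\chi|\leq C|x|^{-4}(\ln\sqrt R)^{-1}\mathcal{F}^{\ell-4}(\cdot)$, and the same collapse of the radial integral $\int_{\sqrt R}^R r^{-N}r^{N-1}\,\mathrm dr=\ln\sqrt R$ using $4p'=N$. Your explicit check that the residual exponent $\ell-4p'$ of $\mathcal{F}$ is nonnegative is a detail the paper leaves implicit, but otherwise the arguments coincide.
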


\begin{proof}
By leveraging the characteristics of the test functions, we obtain

     $$
\begin{aligned}
\int_Q  \varphi_3^{-\frac{1}{p-1}}(x,t)\left|\Delta^2 \varphi_3(x,t)\right|^{p'}\mathrm{d}x \mathrm{d}t =\left(\int_0^T\eta(t) \mathrm{d}t \right)\left(\int_{D^c} \chi_2^{-\frac{1}{p-1}}(x)\left|\Delta^2\chi(x)\right|^{p^{\prime}}\mathrm{d}x \right).
\end{aligned}
$$
the integral of cut-off function  same as \eqref{zeta.estimate} , and classifying the biharmonic integral as in  lemma \ref{salistiry}, we get
$$
\begin{aligned}
    \int_{D^c} \chi_2^{-\frac{1}{p-1}} (x)\left|\Delta^2\chi(x)\right|^{p'} \mathrm{d}x & =  CT \int_{{\sqrt{R}<|x|<R}} \mathcal{F}^{-\frac{\ell}{p-1}}\left(\frac{\ln\left(\frac{|x|}{\sqrt{R}}\right) }{\ln{\sqrt{R}}}\right)\left|\Delta^2\chi(x)\right|^{p'}\mathrm{d}x \\
    &\leq CT(\ln{\sqrt{R}})^{-\frac{N}{4}}\int_{\sqrt{R}<|x|<R} r^{-N} \mathcal{F}^{l-\frac{4p}{p-1}}\left(\frac{\ln\left(\frac{|x|}{\sqrt{R}}\right) }{\ln{\sqrt{R}}}\right)\mathrm{d}x\\
    & \stackrel{|x|=r}{\leq} CT(\ln{\sqrt{R}})^{-\frac{N}{4}}\int_{\sqrt{R}}^Rr^{-N}r^{N-1}\mathrm{d}r\\
    &\leq CT (\ln{\sqrt{R}})^{1-\frac{N}{4}},
\end{aligned}
$$
we get the desired results.

\end{proof}
\begin{Lem}\label{f(x).lem}
    For  sufficiently large $R>0$ the following evaluations appropriate:
    \begin{equation*}
        \int_Q f(x) \varphi_j(x,t)\mathrm{d}x \mathrm{d}t = CT\int_{1<|x|<2R}   f(x)A(x)\xi^{\ell}\left(\frac{|x|}{R}\right)\mathrm{d}x,
    \end{equation*}
    \noindent or 
     \begin{equation*}
        \int_Q f(x) \varphi_j(x,t)\mathrm{d}x \mathrm{d}t = CT\int_{1<|x|<R}   f(x)A(x)\mathcal{F}^{\ell}\left(\frac{\ln \left( \frac{|x|}{\sqrt{R}}\right)}{\ln (\sqrt{R})}\right)\mathrm{d}x,
    \end{equation*}
    
    where $A(x)$ is defined as \eqref{a(x).gen}.
    
\end{Lem}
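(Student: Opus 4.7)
The plan is to observe that each test function $\varphi_j(x,t)$ factors as a product of a time cut-off and a spatial weight, so the double integral separates and the time integral collapses to a constant multiple of $T$.

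First I would use the definition of the test functions: in all cases one has $\varphi_j(x,t)=\eta(t)\,g_j(x)$, where $\eta(t)=\zeta^{\ell}(t/T)$ and the spatial weight is
\begin{equation*}
g_j(x)=\begin{cases} H(x)\,\xi^{\ell}(|x|/R)\ \text{or}\ H(x)\,\mathcal{F}^{\ell}\!\left(\tfrac{\ln(|x|/\sqrt{R})}{\ln\sqrt{R}}\right), & j\in\{1,6\},\\[2pt]
B(x)\,\xi^{\ell}(|x|/R)\ \text{or}\ B(x)\,\mathcal{F}^{\ell}\!\left(\tfrac{\ln(|x|/\sqrt{R})}{\ln\sqrt{R}}\right), & j=2,\\[2pt]
\xi^{\ell}(|x|/R)\ \text{or}\ \mathcal{F}^{\ell}\!\left(\tfrac{\ln(|x|/\sqrt{R})}{\ln\sqrt{R}}\right), & j\in\{3,4,5\},
\end{cases}
\end{equation*}
which, by the definition of $A(x)$ in \eqref{a(x).gen}, is exactly $A(x)$ multiplied by the relevant cut-off in $|x|$. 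Since $f$ depends only on $x$, Fubini gives
\begin{equation*}
\int_Q f(x)\varphi_j(x,t)\,dx\,dt=\left(\int_0^T\eta(t)\,dt\right)\left(\int_{D^c}f(x)g_j(x)\,dx\right).
\end{equation*}

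Next I would evaluate the time factor exactly as in \eqref{zeta.estimate}: the substitution $s=t/T$ yields $\int_0^T\zeta^{\ell}(t/T)\,dt=T\int_0^1\zeta^{\ell}(s)\,ds=CT$ with $C=\int_0^1\zeta^{\ell}(s)\,ds$ a positive absolute constant depending only on $\zeta$ and $\ell$.

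For the spatial factor I would invoke the support properties of the cut-offs. In the polynomial case $\xi(s)=0$ for $s\geq 2$, so $\xi^{\ell}(|x|/R)$ is supported in $\{|x|\leq 2R\}$; intersecting with $D^c=\{|x|>1\}$ restricts the integration to $\{1<|x|<2R\}$, producing the first displayed identity. In the logarithmic case $\mathcal{F}(s)=0$ for $s\geq 1$, i.e.\ whenever $\ln(|x|/\sqrt{R})\geq\ln\sqrt{R}$, equivalently $|x|\geq R$; intersecting with $D^c$ then restricts integration to $\{1<|x|<R\}$, which yields the second displayed identity.

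There is no real obstacle here: the statement is a bookkeeping identity, and the only step that is not an immediate unpacking of definitions is the computation $\int_0^T\eta(t)\,dt=CT$, which has already been carried out in \eqref{zeta.estimate}. The role of the lemma is to isolate the contribution of the forcing term in a single uniform expression so that the sign assumption $\int_{D^c}f(x)A(x)\,dx>0$ from Theorem \ref{th.n=3} can be applied directly in the nonexistence arguments that follow.
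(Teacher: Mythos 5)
Your proposal is correct and follows essentially the same route as the paper: separate variables via the product structure $\varphi_j=\eta(t)g_j(x)$, evaluate the time integral as $CT$ using \eqref{zeta.estimate}, identify the spatial weight $g_j$ as $A(x)$ times the relevant cut-off, and read off the integration range from the supports of $\xi$ and $\mathcal{F}$. Your write-up is in fact more explicit than the paper's (which compresses the case analysis into a single line and dismisses the logarithmic case with ``same for the second integral''), but the content is identical.
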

\begin{proof}
    As we can see, the integral can also be divided by the variables and using equation \eqref{zeta.estimate}, we derive 

\begin{equation*}
\begin{aligned}
    \int_Q  f(x) \varphi_j(x,t)\mathrm{d}x \mathrm{d}t&=\int_0^T \eta(t) d t \int_{D^c} f(x) A(x) \xi^{\ell}\left(\frac{|x|}{R}\right)\mathrm{d}x\\
    &= CT\int_{1<|x|<2R}f(x)A(x)  \xi^{\ell}\left(\frac{|x|}{R}\right) \mathrm{d}x.
\end{aligned}   
\end{equation*}
Same for the second integral with different support of test function.
This concludes the proof.
\end{proof}

\subsection{Properties of test function $\boldsymbol{\psi}$. } Let us consider the following cut-off functions 
\begin{equation}\label{psi}
  \psi_R(x, t):=\left[\phi\left(\frac{(|x|-1)^4+t}{R}\right)\right]^{4 p^{\prime}}, \quad \psi_R^*(x, t):=\left[\phi^*\left(\frac{(|x|-1)^4+t}{R}\right)\right]^{4 p^{\prime}}, 
\end{equation}

\noindent for all $(x, t) \in D^c\times[0, \infty)$, where $\phi \in C^{\infty}([0, \infty) $ satisfies 
$$
\phi(s)=\left\{\begin{array}{ll}
1 & \text { if } s \in[0,1 / 2], \\
\searrow & \text { if } s \in(1 / 2,1), \\
0 & \text { if } s \in[1, \infty),
\end{array}\right.$$ and $$\phi^*(s)= \begin{cases}0 & \text { if } s \in[0,1 / 2), \\
\phi(s) & \text { if } s \in[1 / 2, \infty).\end{cases}
$$
\noindent  Thus, we define a test function as follows
\begin{equation}
    \psi(x, t) := A(x)\psi_R(x, t), \quad \text{for all} \quad (x, t) \in D^c\times[0, \infty),
\end{equation}
where $A(x)$ is \eqref{a(x).gen}. 

\begin{Lem}\label{lifespanes}
    Let  $\psi_R$ and $\psi_R^*$ be a test function \eqref{psi}. The following estimate holds for positive constant $C$:
    $$
    \begin{aligned}
         \left|\partial_t \psi_R(t, x)\right|&\leq C R^{-1}\left[\psi_R^*(t, x)\right]^{\frac{1}{p}}\\
\left|\Delta^2 \psi_R(t, x)\right| &\leq CR^{-1}\left[\psi_R^*(t, x)\right]^{\frac{1}{p}} . 
    \end{aligned}
  $$
\begin{proof}
    The first inequality implies immediately
    $$
\begin{aligned}
\left|\partial_t \psi_R(t, x)\right| &=4 p^{\prime} R^{-1} \left[\phi\left(s_R(t, x)\right)\right]^{4 p^{\prime}-1} \phi^{\prime}\left(s_R(t, x)\right)\\
&\leq CR^{-1} \left[\phi^*\left(s_R(t, x)\right)\right]^{\frac{4 p^{\prime}}{p}} [\phi\left(s_R(t, x)\right)]^3\left|\phi^{\prime}\left(s_R(t, x)\right)\right| \\
& \leq CR^{-1}\left[\psi_R^*(t, x)\right]^{\frac{1}{p}} .
\end{aligned}
$$
The second assertion also follows from direct differentiation of radial functions. 
\end{proof}
  
\end{Lem}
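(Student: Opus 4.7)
Set $\psi_R(x,t) = g(s_R(x,t))$ with $g(s) = \phi(s)^{4p'}$ and $s_R(x,t) = ((|x|-1)^4+t)/R$, and record two structural ingredients. First, every derivative $\phi^{(j)}$ with $j \geq 1$ is supported in $(1/2,1)$, where $\phi \equiv \phi^*$; hence on the support of any term containing at least one $\phi$-derivative, $\phi(s_R)$ may be replaced by $\phi^*(s_R)$. Second, the conjugate relation $\tfrac{1}{p}+\tfrac{1}{p'}=1$ is equivalent to $4p' = 4 + \tfrac{4p'}{p}$, so for $0 \leq j \leq 4$ we have the splitting $\phi(s_R)^{4p'-j} = \phi(s_R)^{4p'/p}\cdot \phi(s_R)^{4-j}$ with a uniformly bounded residual. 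Combining the two, whenever a factor $\phi^{(j)}(s_R)$ with $1 \leq j \leq 4$ appears, we obtain $\phi(s_R)^{4p'-j} \leq C(\psi_R^*)^{1/p}$.

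For the time derivative, the chain rule gives $\partial_t \psi_R = 4p' R^{-1}\phi(s_R)^{4p'-1}\phi'(s_R)$. The presence of $\phi'(s_R)$ confines the support to where $\phi = \phi^*$; the splitting at $j=1$, after absorbing the bounded factor $\phi(s_R)^{3}\phi'(s_R)$ into the constant, yields $|\partial_t \psi_R| \leq CR^{-1}(\psi_R^*)^{1/p}$ at once.

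For the bilaplacian I exploit that $\psi_R$ is radial in $x$. With $r = |x|$, the radial formula $\Delta^2 f = f'''' + \tfrac{2(N-1)}{r} f''' + \tfrac{(N-1)(N-3)}{r^2} f'' - \tfrac{(N-1)(N-3)}{r^3} f'$ applied to $f(r) = g(s_R(r))$ and expanded via Faà di Bruno produces a finite sum of terms of the form $C\, g^{(m)}(s_R) \cdot \prod_{i=1}^{m} s_R^{(j_i)}(r) \cdot r^{-k'}$, with $1 \leq m \leq 4$, $j_i \geq 1$, and $\sum_i j_i + k' = 4$. Since $s_R^{(j)}(r) = c_j (r-1)^{4-j}/R$ for $1 \leq j \leq 4$ and vanishes for $j \geq 5$, each such term becomes a constant multiple of $g^{(m)}(s_R)\cdot (r-1)^{4(m-1)+k'}/(r^{k'} R^m)$. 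The inequality $(r-1)/r \leq 1$ for $r \geq 1$ discards the factor $((r-1)/r)^{k'}$, and the support condition $(r-1)^4 \leq R$ on $\psi_R^*$ bounds the remaining $(r-1)^{4(m-1)}/R^m$ by $R^{m-1}/R^m = R^{-1}$ uniformly in $m$. Since $m \geq 1$, every term carries at least one $\phi$-derivative, so the first step bounds $|g^{(m)}(s_R)|$ by $C(\psi_R^*)^{1/p}$, completing the second estimate.

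The main technical obstacle is the Faà di Bruno bookkeeping and the uniform collapse of the combined $(r-1)$-power, $r$-power, and $R$-power to $R^{-1}$. The matching of the exponent $4$ in $(|x|-1)^4$ to the order of $\Delta^2$ is precisely what makes this scaling uniform; the constraint $j \leq 4$ on derivatives of $\phi$ combined with $4p' = 4 + 4p'/p$ is what enables extraction of $(\psi_R^*)^{1/p}$. Both observations would fail for a mismatched cut-off exponent or operator order, which explains the specific form of $\psi_R$. Beyond these two structural points the argument is elementary calculus.
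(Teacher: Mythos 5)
Your proof is correct and, for the time derivative, is exactly the paper's computation: split $\phi^{4p'-1}=\phi^{4p'/p}\cdot\phi^{3}$ using $4p'=4+4p'/p$, replace $\phi$ by $\phi^*$ on the support of $\phi'$, and absorb the bounded residual into the constant. For the bilaplacian the paper offers only the remark that it ``follows from direct differentiation of radial functions,'' so your Fa\`a di Bruno bookkeeping -- showing each term is $g^{(m)}(s_R)\,(r-1)^{4(m-1)+k'}r^{-k'}R^{-m}$ with $\sum_i j_i+k'=4$, then using $(r-1)/r\leq 1$ and $(r-1)^4\leq R$ on the relevant support to collapse the prefactor to $R^{-1}$, and $k\leq 4$ to extract $(\psi_R^*)^{1/p}$ from $\phi^{4p'-k}$ -- is a genuine filling-in of what the paper leaves implicit rather than a different route. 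The observation that the matching of the exponent $4$ in $(|x|-1)^4$ to the order of $\Delta^2$ is what makes the $R^{-1}$ bound uniform over all terms is exactly the right structural point.
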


\begin{Lem}[Lemma 3.10 \cite{Ikeda1}]\label{ikedalemma}
Let $\delta>0, C_0>0, R_1>0, \theta \geq 0$ and $0 \leq w \in L_{\mathrm{loc}}^1\left([0, T) ; L^1\left(D^c\right)\right)$ for $T>R_1$. Assume that for every $R \in\left[R_1, T\right)$,

$$
\delta+\int_0^T  \int_{D^c} w(x, t) \psi_R(x, t) d x d t \leq C_0 R^{-\frac{\theta}{p^{\prime}}}\left(\int_0^T  \int_{D^c} w(x, t) \psi_R^*(x, t) d x d t\right)^{\frac{1}{p}}.
$$
Then $T$ has to be bounded above as follows:

$$
T \leq \begin{cases}\left(R_1^{(p-1) \theta}+(\log 2) C_0^p \theta \delta^{-(p-1)}\right)^{\frac{1}{(p-1) \theta}} & \text { if } \theta>0, \\ \exp \left(\log R_1+(\log 2)(p-1)^{-1} C_0^p \delta^{-(p-1)}\right) & \text { if } \theta=0 .\end{cases}
$$
\end{Lem}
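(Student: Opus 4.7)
The plan is to reduce the two-variable integro-functional hypothesis to a scalar monotone inequality for
$$F(R):=\delta+\int_0^T\!\!\int_{D^c}w(x,t)\,\psi_R(x,t)\,dx\,dt,$$
which is positive, bounded below by $\delta$, and nondecreasing in $R$ (since $\phi$ is nonincreasing in the level variable $s_R=((|x|-1)^4+t)/R$, hence $\partial_R\psi_R\geq 0$). The first step is to establish the pointwise cutoff comparison
$$\psi_R^*(x,t)\leq \psi_R(x,t)-\psi_{R/2}(x,t),$$
which I would verify by splitting the $s_R$-axis into $[0,1/4]$, $(1/4,1/2]$, $(1/2,1)$, $[1,\infty)$: on the first two $\psi_R^*\equiv 0$ and $\psi_R\geq\psi_{R/2}$; on the third $\psi_{R/2}\equiv 0$ and $\psi_R^*=\psi_R$; on the last all three vanish. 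Multiplying by $w\geq 0$, integrating, and substituting into the hypothesis gives
$$F(R)^p\leq C_0^{\,p}\,R^{-\theta(p-1)}\bigl(F(R)-F(R/2)\bigr),\qquad R\in[2R_1,T).$$

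Next I would convert this into a telescoping inequality for the nonincreasing quantity $\Phi(R):=F(R)^{1-p}$. By the elementary calculus fact $\int_a^b(p-1)u^{-p}\,du\geq (p-1)(b-a)/b^{p}$ for $0<a\leq b$, applied with $a=F(R/2)$ and $b=F(R)$, the preceding display yields
$$\Phi(R/2)-\Phi(R)\geq (p-1)\,\frac{F(R)-F(R/2)}{F(R)^{p}}\geq (p-1)C_0^{-p}\,R^{\theta(p-1)},\qquad R\in[2R_1,T).$$

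Finally, averaging against $dR/R$ produces the bound on $T$. The substitution $r=R/2$ in the shifted term gives the telescoping identity
$$\int_{2R_1}^T\!\!\bigl[\Phi(R/2)-\Phi(R)\bigr]\frac{dR}{R}=\int_{R_1}^{2R_1}\!\!\Phi(r)\frac{dr}{r}-\int_{T/2}^T\!\!\Phi(r)\frac{dr}{r}\leq \Phi(R_1)\log 2\leq \delta^{-(p-1)}\log 2,$$
by monotonicity of $\Phi$ together with $\Phi(R_1)\leq\delta^{-(p-1)}$. On the other hand, the right-hand side equals $(p-1)C_0^{-p}\int_{2R_1}^T R^{\theta(p-1)-1}\,dR$, which evaluates to $C_0^{-p}\theta^{-1}\bigl[T^{\theta(p-1)}-(2R_1)^{\theta(p-1)}\bigr]$ when $\theta>0$ and to $(p-1)C_0^{-p}\log\bigl(T/(2R_1)\bigr)$ when $\theta=0$. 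Solving each resulting algebraic inequality for $T$ produces the two stated bounds; the harmless factor of $2$ in $2R_1$ can be absorbed into the constants or removed by a sharper bookkeeping.

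The main obstacle — and indeed the only nontrivial part of the argument — is the cutoff comparison $\psi_R^*\leq \psi_R-\psi_{R/2}$: it is precisely this pointwise inequality that transforms the ``bad-region'' quantity $\int w\,\psi_R^*$ into the usable increment $F(R)-F(R/2)$, thereby reducing the original two-variable problem to a scalar monotone-operator analysis. Once that reduction is in place, the integral mean-value bound for $u\mapsto u^{1-p}$, the telescoping substitution, and the case analysis for $\theta>0$ versus $\theta=0$ are routine calculus.
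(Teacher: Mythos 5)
Your argument is correct, and it is worth noting that the paper itself offers no proof of this statement: it is imported verbatim as Lemma~3.10 of Ikeda--Sobajima \cite{Ikeda1}, so there is no in-paper proof to compare against. Your reconstruction is a legitimate, self-contained variant of their argument. The original proof integrates the hypothesis against $\mathrm{d}R/R$ and exploits the fact that, for fixed $(x,t)$, the cut-off $\psi_\rho^*$ is supported on a $\rho$-interval of logarithmic length $\log 2$, which converts $\int\!\!\int w\,\psi_\rho^*\,\tfrac{d\rho}{\rho}$ into a quantity controlled by $\int\!\!\int w\,\psi_R$ and leads to a first-order differential inequality; you instead use the discrete dyadic comparison $\psi_R^*\leq \psi_R-\psi_{R/2}$ (which I checked case by case on $s_R=((|x|-1)^4+t)/R$ and which is correct, up to the harmless boundary point $s_R=1/2$ where equality holds) to turn the same quantity into the increment $F(R)-F(R/2)$, and then run a telescoping argument for $\Phi=F^{1-p}$. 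The two routes are essentially dual; yours is arguably more elementary since it avoids differentiating $Y(R)$. The only blemish is the one you flag yourself: integrating over $[2R_1,T]$ produces $(2R_1)^{(p-1)\theta}$ in place of $R_1^{(p-1)\theta}$ (and $\log(2R_1)$ in place of $\log R_1$ when $\theta=0$). This is not merely ``absorbable into constants'' if one wants the lemma exactly as stated, but it is removable at no cost: the derived inequality $F(R)^p\leq C_0^pR^{-\theta(p-1)}\bigl(F(R)-F(R/2)\bigr)$ already holds for all $R\in[R_1,T)$ because the pointwise comparison needs no hypothesis at the level $R/2$, so you may integrate over $[R_1,T]$ instead and bound the telescoped boundary term by
\begin{equation*}
\int_{R_1/2}^{R_1}\Phi(r)\,\frac{dr}{r}\;\leq\;\Phi(R_1/2)\log 2\;\leq\;\delta^{-(p-1)}\log 2,
\end{equation*}
which reproduces the stated constants exactly in both cases $\theta>0$ and $\theta=0$.
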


\section{Proof of main Theorems}

This section is devoted to the proof of the main theorems. First, we show the nonexistence and existence of the solutions in the first critical case. Following that, we prove the second critical case as well with the existence of the solutions. Then, we show the nonexistence of the solutions when $f(x)=0.$ Finally, we demonstrate the lifespan of the solution. 
\subsection{Proof of Theorem \ref{th.n=3} to the problem I and problem VI}
The theorem will be proved by using the method of contradiction.  Let us assume that $u$ is the weak solution to either Problem I or Problem VI.

\noindent  \textbf{\textit{(i)} The case $\boldsymbol{N=2}$.}  By definition \ref{def1}, we have:
\begin{equation*}
    \begin{aligned}
        \int_Q|u(x,t)|^{p}  &\varphi_j(x,t)\mathrm{d}x \mathrm{d}t +\int_Q f(x) \varphi_j(x,t)\mathrm{d}x \mathrm{d}t +\int_{D^c} u_0(x)\varphi_j(x,0)\mathrm{d}x\\
        & =-\int_Q u(x,t)\partial_t\varphi_j(x,t)\mathrm{d}x \mathrm{d}t +\int_Q u(x,t)\Delta^2 \varphi_j(x,t)\mathrm{d}x \mathrm{d}t.
    \end{aligned}
\end{equation*}
 Due to the  $\varepsilon$-Young inequality, we derive
\begin{equation*}
    \begin{aligned}
 \int_Q & u(x,t)  \partial_t \varphi_j(x,t)\mathrm{d}x \mathrm{d}t
\\&
\leq \varepsilon_1  \int_Q|u(x,t)|^p \varphi_j(x,t)\mathrm{d}x \mathrm{d}t+c\left(\varepsilon_1\right)  \int_Q \varphi_j^{-\frac{1}{p-1}}(x,t)\left|\partial_t \varphi_j(x,t)\right|^{p^{\prime}} \mathrm{d}x \mathrm{d}t,
\end{aligned}
\end{equation*}
and
\begin{equation*}
    \begin{aligned}
 \int_Q & u(x,t) \Delta^2 \varphi_j(x,t)\mathrm{d}x \mathrm{d}t
\\ &\leq \varepsilon_2  \int_Q|u(x,t)|^p \varphi_j(x,t)\mathrm{d}x \mathrm{d}t+c\left(\varepsilon_2\right)  \int_Q\varphi_j^{-\frac{1}{p-1}}(x,t) \mid \Delta^2 \varphi_j(x,t)|^{p^{\prime}}\mathrm{d}x \mathrm{d}t.
\end{aligned}
\end{equation*}
Later selecting $\varepsilon_1=\varepsilon_2=\frac{1}{2}$ and rearranging the result, we obtain
\begin{equation}\label{after Young}
\begin{aligned}
\int_Q & f(x) \varphi_j(x,t)\mathrm{d}x \mathrm{d}t+\int_{D^c} u_0(x) \varphi_j(x,0)\mathrm{d}x\\
&\leq \int_Q \varphi_j^{-\frac{1}{p-1}}(x,t)\left|\Delta^2 \varphi_j(x,t)\right|^{p'}\mathrm{d}x \mathrm{d}t+\int_Q\varphi_j^{-\frac{1}{p-1}}(x,t)\left| \partial_t \varphi_j(x,t)\right|^{p^{\prime}}\mathrm{d}x \mathrm{d}t.
\end{aligned}   
\end{equation}
As observed, the test function \eqref{phi} satisfies the last property in definition \ref{def1} for both problem I and problem VI. Therefore, it is enough to prove the problem I. From lemma \ref{estimate.t.n=2} and lemma \ref{estimate.b.n=2}, we know the assessment of the integrals
\begin{equation*}
\begin{aligned}
     \int_Q f(x) \varphi_1(x,t)\mathrm{d}x \mathrm{d}t+\int_{D^c} u_0(x) \varphi_1(x,0)\mathrm{d}x \leq C\left(T^1 R^{2-4p'}(\ln{R})^{\frac{p}{p-1}} + T^{1-p^{\prime}}R^{2} \ln R\right),
\end{aligned}   
\end{equation*}
for $T=R^j, j>4$,  handling with lemma \ref{f(x).lem}, we can rewrite 
\begin{equation*}
\begin{aligned}
      \int_{1<|x|<2R}  f(x)A(x)\xi^{\ell}\left(\frac{|x|}{R}\right) \mathrm{d}x 
     &\leq CR^{-j} \int_{1<|x|<2R} |u_0(x)| \varphi_1(x,0)\mathrm{d}x \\&+ C     R^{2-4p'} \ln^{\frac{p}{p-1}}R  +R^{-jp^{\prime}+2}\ln R,
\end{aligned}   
\end{equation*}

\noindent where $A(x)$  is harmonic function \eqref{A(x)} for particular problem.  By taking 
$R$ sufficiently large and noting $u_0 \in L_{l o c}^1(D^c)$,  we obtain
\begin{equation*}
\begin{aligned}
      \int_{1<|x|<2R} f(x)A(x)\xi^{\ell}\left(\frac{|x|}{R}\right) \mathrm{d}x \leq C    R^{-4p'+2} \ln R   ,
\end{aligned}   
\end{equation*}

\noindent It is straightforward to observe that $2-4p'<0$ for any  $p>1$. Therefore, passing to the limit as $R \rightarrow \infty$ in the above inequality with 
\begin{equation}\label{lim.supc}
    \lim _{R \rightarrow+\infty} A(x)\xi^{\ell}\left(\frac{|x|}{R}\right)=A(x)\xi^{\ell}(0)=A(x).
\end{equation}
Therefore, we get
$$
\lim _{R \rightarrow \infty} \int_{1<|x|<2R} f(x)A(x)  \xi^{\ell}\left(\frac{|x|}{R}\right) \mathrm{d}x=\int_{D^c} f(x)A(x) \mathrm{d}x \leq 0,
$$

\noindent which contradicts our assumption
$$
\int_{D^c} f(x)A(x) dx>0 .
$$
The proof (i) is complete for $N=2$.

\noindent \textbf{ \textit{(ii)} The subcritical case.}  In this proof we deal with test function \eqref{phi}, where the only distinction arises from the harmonic function \eqref{A(x)} with $N\geq 3.$ Reiterating the process as the previous proof, we apply inequality \eqref{after Young}, lemma \ref{estimate.t} and lemma \ref{estimate.b.n=3}. Therefore, we deduce
\begin{equation*}
\begin{aligned}
    \int_Q f(x) \varphi_1(x,t)\mathrm{d}x \mathrm{d}t+\int_{D^c} u_0(x) \varphi_1(x,0)\mathrm{d}x \leq C\left(T^1 R^{N-4p'} + T^{1-p^{\prime}}R^{N} \right).
\end{aligned}   
\end{equation*}
By applying Lemma \ref{f(x).lem} and setting  $T=R^j$ with $j>4$, we derive
\begin{equation*}
\begin{aligned}
      \int_{1<|x|<2R}  f(x)A(x)\xi^{\ell}\left(\frac{|x|}{R}\right) \mathrm{d}x 
     &\leq   CR^{-j} \int_{1<|x|<2R} |u_0(x)| \varphi_1(x,0)\mathrm{d}x\\
     &+ C    \left( R^{N-4p'} +R^{-jp^{\prime}+N} \right),
\end{aligned}   
\end{equation*}
When $N\in\{3,4\}$, for any $p>1$ we have the following: 
\begin{equation*}\label{critical}
    \begin{aligned}
        & -\frac{4p}{p-1} +N < 0.     
    \end{aligned}
\end{equation*}
For $$p < \frac{N}{N-4},\,\,N\geq 5$$ it also follows that $-\frac{4p}{p-1} +N < 0.$  
        
\noindent Therefore, by \eqref{lim.supc}, we have 
\begin{equation*}
\lim _{R \rightarrow \infty} \int_{1<|x|<2R} f (x) A(x)\xi^{\ell}\left(\frac{|x|}{R}\right)d x=\int_{ D^c} f (x) A(x)d x\leq 0,
\end{equation*}

\noindent which contradicts our assumption
$
\int_{D^c} f(x)A(x) dx>0 .
$
The proof has reached its conclusion.

\noindent \textbf{\textit{(ii)} Critical case.} The proof follows a similar approach to the previous one. The main difference is using test functions with logarithmic arguments \eqref{phic}.  Using the inequality \eqref{after Young} and lemma \ref{estimate.t} and lemma \ref{estimate.b}, we have
\begin{equation*}
\begin{aligned}
    \int_Q f(x) \varphi_1(x,t)\mathrm{d}x \mathrm{d}t+\int_{D^c} u_0(x) \varphi_1(x,0)\mathrm{d}x\leq CT^1 \left( (\ln{R})^{-\frac{N}{4}+1}+R^{\frac{N}{2}\left(1-\frac{N}{2}\right)}(\ln{R})^{-\frac{N}{4}}\right), 
\end{aligned}   
\end{equation*} 
\noindent Now, we assign  $T=R^j$ and lemma \ref{f(x).lem} implies 
\begin{equation*}
    \begin{aligned}
 \int_{1<|x|<R} f(x) A(x)\mathcal{F}^{\ell}\left(\frac{\ln\left(\frac{|x|}{\sqrt{R}}\right) }{\ln{\sqrt{R}}}\right)  \mathrm{d}x \leq C &\Big( (\ln{R})^{-\frac{N}{4}+1}+R^{\frac{N}{2}\left(1-\frac{N}{2}\right)}(\ln{R})^{-\frac{N}{4}}\Big)\\
 &+CR^{-j} \int_{D^c} |u_0(x)| \varphi_1(x,0)\mathrm{d}x,
    \end{aligned}
\end{equation*}

\noindent taking $j>4$ and passing to the limit $R \rightarrow \infty$ implies that
\begin{equation}\label{lim.crit}
\lim _{R \rightarrow+\infty} A(x)\mathcal{F}^{\ell}\left(\frac{\ln\left(\frac{|x|}{\sqrt{R}}\right) }{\ln{\sqrt{R}}}\right)=A(x)\mathcal{F}^{\ell}(-1)=A(x).    
\end{equation}
Therefore, we get
$$
\lim _{R \rightarrow \infty} \int_{1<|x|<R} f(x) A(x)\mathcal{F}^{\ell}\left(\frac{\ln\left(\frac{|x|}{\sqrt{R}}\right) }{\ln{\sqrt{R}}}\right) \mathrm{d}x=\int_{ D^c} f(x)A(x) \mathrm{d}x \leq 0,
$$
which is a contradiction with $$\int_{D^c} f(x)A(x) \mathrm{d}x>0.$$ The proof has been concluded.

\subsection{Proof of Theorem \ref{th.n=3} to the problem II}
Test function $\varphi_2$ satisfies the conditions required for a weak solution to Problem II and  $A(x)$ is given by \eqref{H(x)}.

\noindent \textbf{\textit{(i)} The case $\boldsymbol{N=2}$.} By definition \ref{def1}, we have
\begin{equation*}
    \begin{aligned}
        \int_Q  |u|^{p}\varphi_2\mathrm{d}x \mathrm{d}t +\int_Q f(x) \varphi_2(x,t)\mathrm{d}x \mathrm{d}t +&\int_{D^c} u_0(x)\varphi_2(x,0)\mathrm{d}x \\
        &= -\int_Q u\partial_t\varphi_2\mathrm{d}x \mathrm{d}t +\int_Q u\Delta^2 \varphi_2\mathrm{d}x \mathrm{d}t,
    \end{aligned}
\end{equation*}
where $\varphi_2$ is given by \eqref{psi1}. The inequality  \eqref{after Young} for $\varphi_2$ takes the following form: 
\begin{equation*}
\begin{aligned}
\int_Q & f(x) \varphi_2(x,t)\mathrm{d}x \mathrm{d}t+\int_{D^c} u_0(x) \varphi_2(x,0)\mathrm{d}x\\
&\leq \int_Q \varphi_2^{-\frac{1}{p-1}}(x,t)\left|\Delta^2 \varphi_2(x,t)\right|^{p'}\mathrm{d}x \mathrm{d}t+\int_Q\varphi_2^{-\frac{1}{p-1}}(x,t)\left| \partial_t \varphi_2(x,t)\right|^{p^{\prime}}\mathrm{d}x \mathrm{d}t.
\end{aligned}   
\end{equation*}
From lemma \ref{estimate.t.normal} , lemma \ref{estimate.b.normal} and lemma \ref{f(x).lem},  we obtain the following results:
\begin{equation*}
\begin{aligned}
       \int_{1<|x|<2R} f(x) A(x)\xi^{\ell}\left(\frac{|x|}{R}\right) \mathrm{d}x &\leq \frac{1}{CT}\int_{1<|x|<2R} |u_0(x)| \varphi_2(x,0)\mathrm{d}x \\&+C     T^{-p^{\prime}}R^{4} \ln{R} +R^{-\frac{2}{p-1}}(\ln{R})^{\frac{p}{p-1}},
\end{aligned}   
\end{equation*}

\noindent where $A(x)$ is biharmonic function \eqref{H(x)}. Let us set $T=R^j$ with $j>4$, then
\begin{equation*}
\begin{aligned}
      \int_{1<|x|<2R}  f(x)A(x)\xi^{\ell}\left(\frac{|x|}{R}\right) \mathrm{d}x 
     &\leq CR^{-j}\int_{1<|x|<2R} |u_0(x)| \varphi_2(x,0)\mathrm{d}x \\&+ C      R^{4-jp^{\prime}} \ln{R} +R^{-\frac{2}{p-1}}(\ln{R})^{\frac{p}{p-1}},
\end{aligned}   
\end{equation*}

\noindent taking into account \eqref{lim.supc}, and passing to the limit as $R \rightarrow \infty$ in the above inequality one can obtain
$$
\lim _{R \rightarrow \infty} \int_{1<|x|<2R} f(x) A(x) \xi^{\ell}\left(\frac{|x|}{R}\right) \mathrm{d}x=\int_{D^c} f(x)A(x) \mathrm{d}x \leq 0,
$$

\noindent which contradicts our assumption
$$
\int_{D^c} f(x)A(x) dx>0 .
$$
The proof is now finished.

\noindent \textbf{\textit{(i)} The case $\boldsymbol{N=4}$.}  The inequality  \eqref{after Young} for $\varphi_2$ takes the following form: 
\begin{equation*}
\begin{aligned}
\int_Q & f(x) \varphi_2(x,t)\mathrm{d}x \mathrm{d}t+\int_{D^c} u_0(x) \varphi_2(x,0)\mathrm{d}x\\
&\leq \int_Q \varphi_2^{-\frac{1}{p-1}}(x,t)\left|\Delta^2 \varphi_2(x,t)\right|^{p'}\mathrm{d}x \mathrm{d}t+\int_Q\varphi_2^{-\frac{1}{p-1}}(x,t)\left| \partial_t \varphi_2(x,t)\right|^{p^{\prime}}\mathrm{d}x \mathrm{d}t.
\end{aligned}   
\end{equation*}
From lemma \ref{estimate.t.normal.n4} , lemma \ref{estimate.b.normal.n=4} and lemma \ref{f(x).lem},  we obtain the following results:
\begin{equation*}
\begin{aligned}
       \int_{D^c} f(x) A(x)\xi^{\ell}\left(\frac{|x|}{R}\right) \mathrm{d}x &\leq \frac{1}{CT}\int_{D^c} |u_0(x)| \varphi_2(x,0)\mathrm{d}x  \\& +C     T^{-p^{\prime}}R^{4} \ln{R}+R^{-\frac{4}{p-1}}(\ln{R})^{\frac{p}{p-1}},
\end{aligned}   
\end{equation*}

\noindent by setting $T=R^j$ with $j>4$ , we have
\begin{equation*}
\begin{aligned}
      \int_{1<|x|<2R}  f(x)A(x)\xi^{\ell}\left(\frac{|x|}{R}\right) \mathrm{d}x 
     &\leq   CR^{-j}\int_{1<|x|<2R} |u_0(x)| \varphi_2(x,0)\mathrm{d}x\\
     &+CR^{4-jp^{\prime}} \ln{R} +CR^{-\frac{4}{p-1}}(\ln{R})^{\frac{p}{p-1}},
\end{aligned}   
\end{equation*}

\noindent \noindent taking into account \eqref{lim.supc}, and passing to the limit as $R \rightarrow \infty$ in the above inequality one can obtain
$$
\lim _{R \rightarrow \infty} \int_{1<|x|<2R} f(x) A(x) \xi^{\ell}\left(\frac{|x|}{R}\right) \mathrm{d}x=\int_{D^c} f(x)A(x) \mathrm{d}x \leq 0,
$$

\noindent which contradicts our assumption
$$
\int_{D^c} f(x)A(x) dx>0 .
$$
The proof is now finished.

\noindent \textbf{ \textit{(ii)} The subcritical case.}  From lemma \ref{estimate.t.normal.cr} and lemma \ref{estimate.b.normal.cr} we have
\begin{equation*}\label{estimate.f}
\begin{aligned}
    \int_Q f(x) \varphi_2(x,t)\mathrm{d}x \mathrm{d}t+\int_{D^c} u_0(x)& \varphi_2(x,0)\mathrm{d}x \leq C  \left( T^1 R^{N-4p'}  +T^{1-p^{\prime}}R^{N}   \right),
\end{aligned}   
\end{equation*}

\noindent Substituting lemma \ref{f(x).lem} to the left-hand side, we get
\begin{equation*}
\begin{aligned}
      \int_{1<|x|<2R} f(x) A(x)\xi^{\ell}\left(\frac{|x|}{R}\right) \mathrm{d}x 
     &\leq  \frac{1}{CT}\int_{1<|x|<2R}  |u_0(x)| \varphi_2(x,0)\mathrm{d}x\\
     &+C\left(  R^{N-\frac{4p}{p-1}}  +T^{-p^{\prime}}R^{N}   \right),
\end{aligned}   
\end{equation*}
for $T=R^j$ with $j>4$, we derive
\begin{equation*}
\begin{aligned}
      \int_{1<|x|<2R} f(x)A(x) \xi^{\ell}\left(\frac{|x|}{R}\right) \mathrm{d}x 
     &\leq   CR^{-j}\int_{1<|x|<2R} |u_0(x)| \varphi_2(x,0)\mathrm{d}x\\
      &+C \left(  R^{N-\frac{4p}{p-1}}  +R^{N-\frac{jp}{p-1}}   \right),
\end{aligned}   
\end{equation*}
\noindent Considering the power of $R$, it obvious that when $N=3$ the power is negative for any $p>1$. When  $N\geq 5$ power is also negative if $p$ satisfies the following  
        $$p < \frac{N}{N-4}.$$

\noindent By setting $j>4$ and passing to the limit as $R \rightarrow \infty$ in the above inequality one can derive
$$
\lim _{R \rightarrow \infty} \int_{1<|x|<2R} f(x) A(x)\xi^{\ell}\left(\frac{|x|}{R}\right) \mathrm{d}x=\int_{D^c} f(x)A(x) \mathrm{d}x \leq 0,
$$

\noindent which contradicts our assumption
$
\int_{D^c} f(x)A(x) dx>0 .
$
The proof has been completed.

\textbf{\textit{(ii)} Critical case.} From the lemma \ref{crit.2.t}, lemma \ref{crit.2.b} and lemma \ref{f(x).lem} , we get 
\begin{equation*}
    \begin{aligned}
         CT\int_{1<|x|<R}   f(x)A(x)\mathcal{F}^{\ell}\left(\frac{\ln\left(\frac{|x|}{\sqrt{R}}\right) }{\ln{\sqrt{R}}}\right)\mathrm{d}x &+ \int_{1<|x|<R} u_0(x) \varphi_2(0,x)\mathrm{d}x\\
        &\leq C\left(T^1 (\ln{R})^{1-\frac{N}{4}}+T^{1-\frac{N}{4}}R^{N}\right), 
    \end{aligned}
\end{equation*}

\noindent  Therefore, assigning $T$ as $T=R^j$, 
we get
\begin{equation*}
    \begin{aligned}
 \int_{1<|x|<R} f(x)\mathcal{F}^{\ell}\left(\frac{\ln\left(\frac{|x|}{\sqrt{R}}\right) }{\ln{\sqrt{R}}}\right) \mathrm{d}x
 &\leq  CR^{-j}\int_{1<|x|<R} |u_0(x)| \varphi_2(x,0)\mathrm{d}x\\
 &+ C\left(  (\ln{R})^{1-\frac{N}{4}}+R^{-\frac{N(j-4)}{4}}\right),
    \end{aligned}
\end{equation*}

\noindent taking $j>4$ and passing to the limit as $R \rightarrow \infty$ in the above inequality one can obtain
$$
\lim _{R \rightarrow \infty} \int_{1<|x|<R} f(x)A(x) \mathcal{F}^{\ell}\left(\frac{\ln\left(\frac{|x|}{\sqrt{R}}\right) }{\ln{\sqrt{R}}}\right) dx=\int_{ D^c} f(x) A(x)\mathrm{d}x \leq 0,
$$
which is a contradiction with
$$
\int_{ D^c} f(x)A(x) \mathrm{d}x>0.
$$

\noindent The proof has been finalized.

\subsection{Proof of Theorem \ref{th.n=3} to  problems III-V}
Test function $\varphi_3$  satisfies the conditions outlined in the definition \ref{def1} for problems III-V. Therefore, we utilize $\varphi_3$ to address three problems simultaneously.

\noindent \textbf{\textit{(i)} Subcritical case.} Applying the definition of test function \ref{def1} and inequality \eqref{after Young}, we have
\begin{equation*}
\begin{aligned}
\int_Q & f(x) \varphi_3(x,t)\mathrm{d}x \mathrm{d}t+\int_{D^c} u_0(x) \varphi_3(x,0)\mathrm{d}x\\
&\leq \int_Q \varphi_3^{-\frac{1}{p-1}}(x,t)\left|\Delta^2 \varphi_3(x,t)\right|^{p'}\mathrm{d}x \mathrm{d}t+\int_Q\varphi_3^{-\frac{1}{p-1}}(x,t)\left| \partial_t \varphi_3(x,t)\right|^{p^{\prime}}\mathrm{d}x \mathrm{d}t.
\end{aligned}   
\end{equation*}

\noindent The estimates of lemma \ref{estimate.t.neu} and lemma \ref{estimate.b.neu} yields 
\begin{equation*}
\begin{aligned}
     \int_Q f(x) \varphi_3(x,t)\mathrm{d}x \mathrm{d}t+\int_{D^c} u_0(x) \varphi_3(x,0)\mathrm{d}x \leq C\left(T^1 R^{N-4p'} + T^{1-p^{\prime}}R^{N} \right),
\end{aligned}   
\end{equation*}

\noindent  By lemma \ref{f(x).lem} and  for $T=R^j, j>4$, we get
\begin{equation*}
\begin{aligned}
      \int_{1<|x|<2R}  f(x)\xi^{\ell}\left(\frac{|x|}{R}\right) \mathrm{d}x 
     &\leq   CR^{-j} \int_{1<|x|<2R} |u_0(x)| \varphi_3(x,0)\mathrm{d}x\\
     &+ C    \left( R^{N-4p'} +R^{-jp^{\prime}+N} \right),
\end{aligned}   
\end{equation*}

\noindent  taking the $R$ big enough observe that 
\begin{equation*}
\begin{aligned}
      \int_{1<|x|<2R}  f(x)\xi^{\ell}\left(\frac{|x|}{R}\right) \mathrm{d}x \leq C    R^{-4p'+N}.
\end{aligned}   
\end{equation*}

\noindent  When $N\in\{2,3,4\},$ the power of $R$ is always negative.  For $N\geq 5$, the power remains negative provided that $p$ is subcritical: 
        $$p < \frac{N}{N-4}.$$
By \eqref{lim.supc} and passing to the limit as $R \rightarrow \infty$, , we can derive the following conclusions:
$$
\lim _{R \rightarrow \infty} \int_{1<|x|<2R} f(x)  \xi^{\ell}\left(\frac{|x|}{R}\right) \mathrm{d}x=\int_{D^c} f(x) \mathrm{d}x \leq 0,
$$

\noindent which contradicts our assumption
$$
\int_{D^c} f(x) dx>0,
$$
when $A(x)=1$ in \eqref{a(x).gen}. The proof is now complete.

\noindent\textbf{\textit{(ii)} Critical case.} Using the inequality \eqref{after Young} in subcritical case and lemma \ref{estimate.t.neu.cr}, lemma \ref{estimate.b.neu.cr}, we get
\begin{equation*}
\begin{aligned}
\int_Q f(x) \varphi_3(x,t)\mathrm{d}x \mathrm{d}t+\int_{D^c} u_0 \varphi_3(0,x) \mathrm{d}x \leq C\left(T^1 (\ln{\sqrt{R}})^{1-\frac{N}{4}}+T^{1-\frac{N}{4}}R^{N}\right).
\end{aligned}
\end{equation*}

\noindent Adopting lemma \ref{f(x).lem} and taking $T=R^j$, $p=\frac{N}{N-4}$, we derive
\begin{equation*}
    \begin{aligned}
  \int_{1<|x|<R} f(x)\mathcal{F}^{\ell}\left(\frac{\ln\left(\frac{|x|}{\sqrt{R}}\right) }{\ln{\sqrt{R}}}\right) \mathrm{d}x   \leq C&\left( (\ln{\sqrt{R}})^{-\frac{N-4}{4}}+R^{-\frac{N(j-4)}{4}}\right)\\
  &+CR^{-j} \int_{1<|x|<R} |u_0(x)| \varphi_3(x,0)\mathrm{d}x,
    \end{aligned}
\end{equation*}

\noindent by considering $j>4$ and letting R approach infinity in the preceding inequality, one can derive
$$
\lim _{R \rightarrow \infty} \int_{1<|x|<R} f(x) \mathcal{F}^{\ell}\left(\frac{\ln\left(\frac{|x|}{\sqrt{R}}\right) }{\ln{\sqrt{R}}}\right) dx=\int_{ D^c} f(x) d x \leq 0,
$$
which is a contradiction with
$
\int_{ D^c} f (x)d x>0.
$ This completes the proof.

So far, we have demonstrated (i) the subcritical case and (ii) the critical case of Theorem \ref{th.n=3}.

 \textbf{\textit{(iii)} Supercritical case.} This subsection provides proof of the supercritical case to problems I-VI. Let us 
\begin{equation}\label{v}
v(x)=\varepsilon |x|^{-m} ,
\end{equation}
where $\varepsilon>0$ and $$\frac{4}{p-1} < m<N-4.$$ 
Note that the set of $m$ is nonempty because of $p>\frac{N}{N-4}.$ Following basic calculations, we obtain
\begin{align*}
\Delta^2 v(x)&=\varepsilon m(m+2)[m-N+2][m-N+4] |x|^{-m-4}\\&=\varepsilon M |x|^{-m-4},
\end{align*}

\noindent  Therefore, we have
\begin{equation*}
\begin{aligned} 
\Delta^2 v(x) - v^p(x)& =\varepsilon M |x|^{-m-4}-\varepsilon^p |x|^{-mp}\\&=\varepsilon |x|^{-m-4} \left(M-\varepsilon^{p-1}|x|^{-mp+m+4}\right),
\end{aligned}    
\end{equation*}
where $M=m(m+2)[m-N+2][m-N+4]$. Taking into account $-mp+m+4< 0$ and for sufficiently small $\varepsilon,$ we obtain 
\begin{align*}
    \Delta^2 v(x) - v^p(x)&>\varepsilon  \left(M-\varepsilon^{p-1}\right)|x|^{-m-4}\\&=f(x)>0.
\end{align*}
Moreover, by \eqref{v} we have that
\begin{equation*}
\begin{aligned} 
&\left.v(x)\right|_{\partial B_1}=\varepsilon > 0, \quad \left.\frac{\partial v(x)}{\partial\nu}\right|_{\partial B_1}=\varepsilon m > 0,
\end{aligned}    
\end{equation*}
and
\begin{equation*}
\begin{aligned} 
 &\left.-\Delta v(x)\right|_{\partial B_1}=-\varepsilon m(m-N+2)  > 0,
\end{aligned}    
\end{equation*}
\begin{equation*}
\begin{aligned} 
&\left. \frac{\partial(\Delta v(x))}{\partial\nu}\right|_{\partial B_1}= \varepsilon m(m+2)(m-N+2)> 0.
\end{aligned}    
\end{equation*}
By comparison principle $$u(x,t)\leq v(x)\,\,\, \text{for all} \,\,\,(x,t) \in Q$$ and setting $$f(x):=\Delta^2 v(x) - v^p(x)>0,$$ we derive a stationary positive super solution to problems I-VI. The proof of the (iii) Theorem \ref{th.n=3}  to problems I-VI has been concluded. The proof of Corollary \ref{Cor1} is carried out in a manner similar to the proof of Theorem \ref{th.n=3}.
\subsection{Proof of Theorem \ref{th.2}}
This subsection provides proof of the second critical exponents to problems I-VI.  The rationale behind providing a single proof for all problems lies in the fact that each of these problems shares identical first critical exponents. Therefore, the analysis and results concerning the second critical exponents apply uniformly across all six problems.

\noindent \textbf{The cases \textit{(i)}   $\boldsymbol{\omega<0$  \textbf{and \textit{(ii)}}  $0\leq \omega<\frac{4p}{p-1}.}$} Using the inequality \eqref{after Young}
\begin{equation}\label{se}
\begin{aligned}
      \int_{D^c}  f(x)\xi^{\ell}\left(\frac{|x|}{R}\right) \mathrm{d}x 
     \leq   CR^{-j} \int_{D^c} u_0(x) \varphi(x,0)\mathrm{d}x+ C_1    \left( R^{N-4p'} +R^{-jp^{\prime}+N} \right).
\end{aligned}   
\end{equation}

\noindent Evaluating the left-hand side of the above inequality 

\begin{equation}\label{f(x)xi}
\begin{aligned}
      \int_{D^c}  f(x)\xi^{\ell}\left(\frac{|x|}{R}\right) \mathrm{d}x =  &\int_{1<|x|<2R}  f(x)\xi^{\ell}\left(\frac{|x|}{R}\right) \mathrm{d}x\\
     &\geq \int_{1<|x|<R}  f(x) \mathrm{d}x\\
     &\geq C \int_{1<|x|<R} |x|^{-\omega}\mathrm{d}x\\
     & = CR^{N-\omega}.
\end{aligned}   
\end{equation}

\noindent  For sufficiently large $R$, the inequality \eqref{se} has the following form:
\begin{equation*}
    CR^{N-\omega}\leq    C_1    \left( R^{N-4p'} +R^{-jp^{\prime}+N} \right),
\end{equation*}
 we can deduce that
\begin{equation*}
    C\leq    C_1    \left( R^{\omega-\frac{4p}{p-1}} +R^{\omega-\frac{jp}{p-1}} \right),
\end{equation*}
for $j>4$ both condition \textbf{\textit{(i)}} $\omega<0$, \textbf{\textit{(ii)}} $0\leq \omega<\frac{4p}{p-1}$ equips negative power. Therefore, $R\rightarrow \infty$ gives

\begin{equation*}
    0<C\leq 0,
\end{equation*}
which contradicts our assumptions for the constant.

\noindent\textbf{The case \textit{(iii)} }$\boldsymbol{\frac{4p}{p-1}\leq \omega <N}.$
We consider the function 
\begin{equation*}
\begin{aligned} 
\hat{u}(x)=\xi |x|^{-m}, 
\end{aligned}
\end{equation*}

\noindent where $\xi>0$ and

$$\omega -4 < m < N-4,$$
we know that 
\begin{equation*}
    \Delta^2\hat{u}(x) -\hat{u}^p(x)-f(x)\geq 0,
\end{equation*}
using the estimate from the existence theorem,
\begin{equation*}
\begin{aligned}    
     &C\xi r^{-m-4} - f(x)\geq 0,
\end{aligned}    
\end{equation*}
from easy calculation we get,
\begin{equation*}
    \begin{aligned}
         f(x) &\leq C\xi r^{-m-4}  \\
    &\leq C\xi r^{-\omega +4-4}\\
    & = C\xi r^{-\omega},
    \end{aligned}
\end{equation*}
which shows that $f\in \mathcal{I}_\omega^{-} $. The proof of theorem \ref{th.2} is completed.

\subsection{Proof of Theorem \ref{th.f}} This subsection addresses the nonexistence of solutions to problems I-VI under the condition $f(x)=0$. Since the structure of the proof is analogous across all six problems, we shall illustrate the reasoning by focusing on Problem III as a representative case for $A(x)=1$. 

\noindent \textbf{\textit{(i)} Subcritical case $\boldsymbol{p<p_{Fuj}}$.} By definition \ref{def2}, we have
\begin{equation*}
    \begin{aligned}
        \int_Q  |u|^{p}\varphi_j\mathrm{d}x \mathrm{d}t +\int_{D^c} u_0(x)\varphi_j(x,0)\mathrm{d}x = -\int_Q u\partial_t\varphi_j\mathrm{d}x \mathrm{d}t +\int_Q u\Delta^2 \varphi_j\mathrm{d}x \mathrm{d}t,
    \end{aligned}
\end{equation*}
which yields 
\begin{equation*}
\begin{aligned}
\int_Q |u(x,t)|^{p}&\varphi_3(x,t)\mathrm{d}x \mathrm{d}t +\int_{D^c} u_0(x) \varphi_3(x,0)\mathrm{d}x\\
&\leq \int_Q \varphi_3^{-\frac{1}{p-1}}(x,t)\left|\Delta^2 \varphi_3(x,t)\right|^{p'}\mathrm{d}x \mathrm{d}t+\int_Q\varphi_3^{-\frac{1}{p-1}}(x,t)\left| \partial_t \varphi_3(x,t)\right|^{p^{\prime}}\mathrm{d}x \mathrm{d}t.
\end{aligned}   
\end{equation*}

\noindent By lemma \ref{estimate.t.neu} and lemma \ref{estimate.b.neu}, we derive
\begin{equation*}
\begin{aligned}
   \int_Q& |u(x,t)|^{p}\varphi_3(x,t)\mathrm{d}x \mathrm{d}t +\int_{D^c} u_0(x) \varphi_3(x,0)\mathrm{d}x \leq C\left(T^1 R^{N-4p'} + T^{1-p^{\prime}}R^{N} \right),
\end{aligned}   
\end{equation*}
Hence, for $T=R^4$, we have  
\begin{equation}\label{up}
\begin{aligned}
  \int_0^{R^4} \int_{1<|x|<2R}|u(x,t)|^{p}\varphi_3(x,t)\mathrm{d}x \mathrm{d}t + \int_{1<|x|<2R} u_0(x) \varphi_3(x,0)\mathrm{d}x \leq C R^{N-\frac{4}{p-1}},
\end{aligned}   
\end{equation}
when $1<p<1+\frac{4}{N}$, we get 
\begin{equation*}
    N-\frac{4}{p-1}<0,
\end{equation*}
\noindent  then passing to the limit $R \rightarrow \infty$ in \eqref{up}, we arrive at 
\begin{equation*}
\begin{aligned}
\lim _{R \rightarrow \infty} &\left(\int_0^{R^4} \int_{1<|x|<2R} |u(x,t)|^{p}\varphi_3(x,t)\mathrm{d}x \mathrm{d}t +\int_{1<|x|<2R} u_0 (x)\varphi_3(x,0) \mathrm{d}x\right)\\
&=\int_0^{\infty}  \int_{D^c}|u(x,t)|^{p}A(x)\mathrm{d}x \mathrm{d}t+\int_{D^c} u_0(x)A(x)\mathrm{d}x\\
&\leq  0,
\end{aligned}
\end{equation*}
which contradicts our assumption $$\int_{D^c} u_0(x)A(x) dx\geq0.$$ The proof is complete.

\noindent \textbf{\textit{(i)} Critical case $\boldsymbol{p=p_{Fuj}}$.}
If $p=p_{Fuj},$ then from \eqref{up} one obtain that 
\begin{equation*}
    \begin{aligned}
  \int_0^{R^4} \int_{1<|x|<2R}|u(x,t)|^{p}\varphi_3(x,t)\mathrm{d}x \mathrm{d}t + \int_{1<|x|<2R} u_0(x) \varphi_3(x,0)\mathrm{d}x \leq C.
\end{aligned}   
\end{equation*}
Since $\int_{D^c} u_0(x) d x \geq 0$, passing to the limit as $R \rightarrow \infty$ in the last inequality we have that
$$
\int_0^{\infty} \int_{D^c}|u|^p d x d t \leq C,
$$
which implies that $u \in L^p\left([0,\infty) \times D^c\right)$.

\noindent By definition 
\ref{def2}, we have 
\begin{equation*}
    \begin{aligned}
        \int_Q |u|^{p}\varphi_3\mathrm{d}x \mathrm{d}t +\int_{D^c} u_0(x)\varphi_3(x,0)\mathrm{d}x =-\int_Q u\partial_t\varphi_3\mathrm{d}x \mathrm{d}t +\int_Q u\Delta^2 \varphi_3\mathrm{d}x \mathrm{d}t.
    \end{aligned}
\end{equation*}
Changing $T=R^4$ and taking into account \eqref{chi}, we derive
\begin{equation*}
    \begin{aligned} 
     \int\limits_0^{R^4}  \int\limits_{D^c}|u|^{p} \varphi_3\mathrm{d}x \mathrm{d}t  &+\int\limits_{D^c}u_0(x)\varphi_3(x,0)\mathrm{d}x=-\int\limits_{\frac{R^4}{2}}^{R^4} \int\limits_{D^c} u\partial_t\varphi_3\mathrm{d}x \mathrm{d}t +\int\limits_0^{R^4} \int\limits_{R<|x|<2R} u\Delta^2 \varphi_3\mathrm{d}x \mathrm{d}t.
    \end{aligned}
\end{equation*}
\noindent Due to H\"{o}lder's inequality, lemma \ref{estimate.t.neu} and lemma \ref{estimate.b.neu}  with $p=1+\frac{4}{N}$, we get
\begin{equation*}
    \begin{aligned} 
    \int\limits_{\frac{R^4}{2}}^{R^4}\int\limits_{D^c} u\partial_t\varphi_3\mathrm{d}x \mathrm{d}t & \leq \left( \int\limits_{\frac{R^4}{2}}^{R^4}\int\limits_{D^c} |u|^p\varphi_3\mathrm{d}x \mathrm{d}t \right)^{\frac{1}{p}}\left( \int\limits_{\frac{R^4}{2}}^{R^4} \int\limits_{D^c} \varphi_3^{-\frac{1}{p-1}}|\partial_t\varphi_3|^{p'}\mathrm{d}x \mathrm{d}t \right)^{\frac{1}{p'}}\\
    & = C\left( \int\limits_{\frac{R^4}{2}}^{R^4} \int\limits_{D^c} |u|^p\varphi_3\mathrm{d}x \mathrm{d}t \right)^{\frac{1}{p}},
    \end{aligned}
\end{equation*}
and 
\begin{equation*}
    \begin{aligned} 
    \int\limits_0^{R^4} \int\limits_{D^c} u\Delta^2 \varphi_3\mathrm{d}x \mathrm{d}t &\leq \left( \int\limits_0^{R^4} \int\limits_{R<|x|<2R} |u|^p\varphi_3\mathrm{d}x \mathrm{d}t \right)^{\frac{1}{p}}\left( \int\limits_0^{R^4} \int\limits_{R<|x|<2R} \varphi_3^{-\frac{1}{p-1}}|\Delta^2 \varphi_3|^{p'}\mathrm{d}x \mathrm{d}t \right)^{\frac{1}{p'}}\\
    &= C\left( \int\limits_0^{R^4} \int\limits_{R<|x|<2R} |u|^p\varphi_3\mathrm{d}x \mathrm{d}t \right)^{\frac{1}{p}}.
    \end{aligned}
\end{equation*}
 Therefore, we have 
\begin{equation}\label{R1}
    \begin{aligned} 
    \int\limits_0^{R^4}  \int\limits_{D^c}|u|^{p} \varphi_3\mathrm{d}x \mathrm{d}t  &+\int\limits_{D^c} u_0(x)\varphi_3(x,0)\mathrm{d}x\\
     &\leq  C\left( \int\limits_{\frac{R^4}{2}}^{R^4} \int\limits_{D^c} |u|^p\varphi_3\mathrm{d}x \mathrm{d}t \right)^{\frac{1}{p}}+C\left( \int\limits_0^{R^4} \int\limits_{R<|x|<2R} |u|^p\varphi_3\mathrm{d}x \mathrm{d}t \right)^{\frac{1}{p}}.
    \end{aligned}
\end{equation}
Since $u \in L^p_{loc}\left([0,\infty) \times D^c\right),$ the integral property implies
\begin{equation*}
    \begin{aligned} 
      \lim _{R \rightarrow \infty} \left( \int\limits_{\frac{R^4}{2}}^{R^4} \int\limits_{D^c} |u|^p\varphi_3\mathrm{d}x \mathrm{d}t \right)^{\frac{1}{p}}& = \lim _{R \rightarrow \infty}\left( \int\limits_{0}^{R^4} \int\limits_{D^c} |u|^p\varphi_3\mathrm{d}x \mathrm{d}t -\int\limits^{\frac{R^4}{2}}_{0} \int\limits_{D^c} |u|^p\varphi_3\mathrm{d}x \mathrm{d}t \right)^{\frac{1}{p}}\\
      & = \lim _{R \rightarrow \infty} \left( \int\limits_{0}^{\infty} \int\limits_{D^c} |u|^p\varphi_3\mathrm{d}x \mathrm{d}t -\int\limits^{\infty}_{0} \int\limits_{D^c} |u|^p\varphi_3\mathrm{d}x \mathrm{d}t \right)^{\frac{1}{p}}\\
      & = 0,
    \end{aligned}
\end{equation*}

\noindent and 
\begin{align*}
     \lim _{R \rightarrow \infty}  \Big( \int\limits_0^{R^4} \int\limits_{R<|x|<2R} &|u|^p\varphi_3\mathrm{d}x \mathrm{d}t \Big)^{\frac{1}{p}}\\
     & = \lim _{R \rightarrow \infty}  \left( \int\limits_0^{R^4} \int\limits_{0<|x|<2R} |u|^p\varphi_3\mathrm{d}x \mathrm{d}t -\int\limits_0^{R^4} \int\limits_{0<|x|<R} |u|^p\varphi_3\mathrm{d}x \mathrm{d}t \right)^{\frac{1}{p}}\\
      & = \lim _{R \rightarrow \infty}  \left( \int\limits_0^{\infty} \int\limits_{0<|x|<\infty} |u|^p\varphi_3\mathrm{d}x \mathrm{d}t -\int\limits_0^{\infty} \int\limits_{0<|x|<\infty} |u|^p\varphi_3\mathrm{d}x \mathrm{d}t \right)^{\frac{1}{p}}\\
      & = 0.
\end{align*}

\noindent Hence, passing to the lim $R\rightarrow\infty$ in \eqref{R1}, we get 
\begin{align*}
\lim _{R \rightarrow \infty}\Big(\int\limits_0^{R^4}  \int\limits_{D^c}|u|^{p} \varphi_3\mathrm{d}x \mathrm{d}t+\int\limits_{D^c} u_0(x)&\varphi_3(x,0)\mathrm{d}x\Big)\\
&=\int\limits_0^{\infty}  \int\limits_{D^c}|u|^{p}A(x)\mathrm{d}x \mathrm{d}t+\int\limits_{D^c} u_0(x)A(x)\mathrm{d}x\\&\leq  0,\end{align*} which contradicts the assumption $\int_{D^c} u_0(x)A(x)\mathrm{d}x\geq 0.$ The proof of the theorem is completed.

\subsection{Proof of Theorem \ref{lf}} In this subsection we give an estimate for the lifespan of the solution to Problems I-VI when $f(x)=0$. The method of proof works for all problems and it's enough to prove one of the problems.

  Without loss of generality, we may assume $R_0:=2 r_0^2<\text{LifeSpan} (u_\varepsilon)$. Then, from definition of weak solution \ref{def2}, we obtain
$$
\begin{aligned}
\int_0^T  \int_{D^c}|u(t, x)|^p \psi(t, x) \mathrm{d} x \mathrm{d} t&+\varepsilon \int_{D^c} u_0(x) \psi(0,x)\mathrm{d} x \\
\quad & \leq \int_0^T \int_{D^c}|u(t, x)|\left(\left|\partial_t \psi(t, x)\right|+\left|\Delta^2 \psi(t, x)\right|\right) \mathrm{d} x \mathrm{d} t .
\end{aligned}
$$

Now we assume that $\operatorname{LifeSpan}(u)>R_0$. From lemma \ref{lifespanes} and noting that $A(x)$ is independent of $t$, it follows  that
\begin{equation*}
\begin{aligned}
\Big|\partial_t\big(A(x) \psi_R(t)\big)&-\Delta^2\left(A(x)\psi_R(t,x)\right)\Big| \\
& = \left|\partial_t\left(A(x) \psi_R(t,x)\right)-4\nabla A(x)\nabla\Delta \psi_R(t,x)-A(x)\Delta^2 \psi_R(t,x) \right|
\\
& \leq \frac{2 C}{R} A(x)\left[\psi_R^*(t,x)\right]^{\frac{1}{p}}+\frac{C}{R} A(x)\left[\psi_R^*(t,x)\right]^{\frac{1}{p}}+\frac{C}{R} A(x)\left[\psi_R^*(t,x)\right]^{\frac{1}{p}}\\
& \leq \frac{C}{R} A(x)\left[\psi_R^*(t)\right]^{\frac{1}{p}},
\end{aligned}
\end{equation*}
making some algebraic improvements and noting $u_0(x)\in L^1_{loc}(D^c)$
$$
\begin{aligned}
c_0\varepsilon+\int_0^T  \int_{D^c}|u(t, x)|^p&A(x) \psi_R(t, x) \mathrm{d} x \mathrm{d} t \\
\quad & \leq C R^{-1} \int_0^T \int_{D^c}|u(t, x)|A(x)\left[\psi_R^*(t, x)\right]^{\frac{1}{p}} \mathrm{~d} x \mathrm{d} t \\
& \leq  CR^{-1+\frac{N+4}{4p^{\prime}}}\left(\int_0^T \int_{D^c}|u(t, x)|^p A(x)\psi_R^*(t, x) \mathrm{d} x \mathrm{d} t\right)^{\frac{1}{p}},
\end{aligned}
$$
which is
\begin{align*}
c_0\varepsilon +\int_0^T  \int_{D^c}|u(t, x)|^p &A(x)\psi_R(t, x) \mathrm{d} x \mathrm{d} t \\& \leq  CR^{-\frac{1}{p'}\left(\frac{1}{p-1}-\frac{N}{4}\right)}\left(\int_0^T \int_{D^c}|u(t, x)|^pA(x) \psi_R^*(t, x) \mathrm{d} x \mathrm{d} t\right)^{\frac{1}{p}},
\end{align*}
using lemma \ref{ikedalemma} with $w(x,t)=|u(x,t)|^p A(x)$ and 
$\theta=\frac{1}{p-1}-\frac{N}{4},$
we have the desired results.

\section{Related problems and possible prospects}
In this section we will discuss possible extensions, related problems, and prospects of problems I-IV.
\subsection{Polyharmonic heat equations}
Instead of the biharmonic heat equation \eqref{main}, one can study the following polyharmonic heat equation
\begin{equation}\label{polyhar}
u_t+(-\Delta)^m u=|u|^{p}+f(x),  \text { in }  D^c \times(0, \infty),
\end{equation} with suitable boundary conditions. Here $m\in\mathbb{N}.$ Since in case $D^c\equiv\mathbb{R}^N$ the critical exponent of \eqref{polyhar} is $\frac{N}{N-2m}$ (see \cite{majdoub}), then in the case of the exterior domain one should also expect that the critical exponent of equation \eqref{polyhar} with suitable boundary conditions will be $$p_{Crit}=\frac{N}{N-2m}.$$
Accordingly, in case $f=0$, following \cite{Gala} it is expected that the critical exponent of equation $$u_t+(-\Delta)^m u=|u|^{p},  \text { in }  D^c \times(0, \infty)$$ with suitable boundary conditions will be $$p_{Crit}=1+\frac{2m}{N}.$$

However, proving these facts requires rather cumbersome calculations. At least we assume so, based on our approaches used in this paper. We hope that in the future there will be other approaches with which it will be possible to prove the critical behavior of solutions to exterior problems for \eqref{polyhar}.
\subsection{$\boldsymbol{f(x)}$ as a boundary function}
In the classic paper by Bandle, Levine and Zhang \cite{BLZ}, in addition to problem \eqref{aaa}, were also studied the heat equation $$u_t-\Delta u=|u|^p,  \text { in } 	D^c \times(0, \infty),$$ with inhomogeneous Dirichlet $$u(x,t)=f(x)\,\,\text{in}\,\,\partial D,$$ or with inhomogeneous Neumann $$\frac{\partial u}{\partial n}(x,t)=f(x)\,\,\text{in}\,\,\partial D,$$ boundary conditions.

It has been proven that for the above problems with $$\int\limits_{\partial D}f(x)dS>0$$ the critical exponent is still $$p_{Crit}=\frac{N}{N-2}.$$ Following this idea, one can study the biharmonic heat equation $$u_t+\Delta^2 u=|u|^p,  \text { in } 	D^c \times(0, \infty),$$ with suitable inhomogeneous boundary conditions. One might expect that the critical exponents of these problems will still be $$p_{Crit}=\frac{N}{N-4},\,n\geq 5\,\,\,\, \text{and} \,\,\,\,p_{Crit}=\infty,\,n=2, 3, 4.$$

One can also consider a system of biharmonic heat equations
\begin{align*}
\left\{\begin{array}{l}u_t+\Delta^2 u=|v|^p,  \text { in } 	D^c \times(0, \infty),\\{}\\
v_t+\Delta^2 v=|u|^q, \text { in } 	D^c \times(0, \infty),\end{array}\right.
\end{align*}
with inhomogeneous or homogeneous boundary conditions. Note that recently Jleli and Samet in \cite{Sametbi} studied a similar problem for the system of biharmonic wave equations.

\subsection{System of biharmonic heat equations}
It also makes sense to study the exterior problems to the system of the biharmonic heat equation
\begin{align*}
\left\{\begin{array}{l}u_t+\Delta^2 u=|v|^p+f(x),  \text { in } 	D^c \times(0, \infty),\\{}\\
v_t+\Delta^2 v=|u|^q+g(x), \text { in } 	D^c \times(0, \infty),\end{array}\right.
\end{align*} where $p>1,\,q>1$ and $f,g$ are given source functions.

With high probability, the critical exponent of this system will be
$$N-4-\max\left\{\frac{4(p+1)}{pq-1}, \frac{4(q+1)}{pq-1}\right\},\,N\geq 5.$$

One can also study the critical properties of more general systems of the forms 
\begin{align*}\left\{\begin{array}{l}
u_t+(-\Delta)^{m_1} u=|u|^{p_1}|v|^{p_2}+f(x),  \text { in } 	D^c \times(0, \infty),\\{}\\
v_t+(-\Delta)^{m_2} v=|u|^{q_1}|v|^{q_2}+g(x), \text { in } 	D^c \times(0, \infty),\end{array}\right.
\end{align*}
or
\begin{align*}\left\{\begin{array}{l}u_t+(-\Delta)^{m_1} u=|u|^{p_1}+|v|^{p_2}+f(x),  \text { in } 	D^c \times(0, \infty),\\{}\\
v_t+(-\Delta)^{m_2} v=|u|^{q_1}+|v|^{q_2}+g(x), \text { in } 	D^c \times(0, \infty),\end{array}\right.
\end{align*}
where $m_1,m_2\in \mathbb{N}$ and $p_1>1, p_2>1, q_1>1, q_2>1.$

\section*{Acknowledgment}
The authors are very grateful to Professor Qi S. Zhang and Professor Bessem Samet for useful comments and valuable advice. Authors also would like to thank to Professor Filippo Gazzola and Professor Masahiro Ikeda for useful comments and for giving us some references.

\section*{Declaration of competing interest}
	The Authors declare that there is no conflict of interest.

\section*{Funding}  
This research has been funded by the Science Committee of the Ministry of Education and Science of the Republic of Kazakhstan (Grant No. AP23483960). NT was
supported by the collaborative research program ‘Qualitative analysis for nonlocal and fractional models’ from Nazarbayev University.

\end{document}